\newcommand{\email}[1]{\href{mailto:#1}{#1}}
\newtheorem{theorem}{Theorem}
\newtheorem{proposition}[theorem]{Proposition}
\newtheorem{lemma}[theorem]{Lemma}
\theoremstyle{remark}
\newtheorem{remark}[theorem]{Remark}
\theoremstyle{definition}
\newcommand{\st}{\,:\,}
\newcommand{\Real}{\mathbb{R}}
\newcommand{\Natural}{\mathbb{N}}
\newcommand{\Symm}{\mathbb{S}}
\DeclareRobustCommand{\bvec}[1]{\boldsymbol{#1}}
  \renewcommand{\bvec}[1]{#1}%
\newcommand{\uvec}[1]{\underline{\bvec{#1}}}
\newcommand{\cvec}[1]{\bvec{\mathcal{#1}}}
\DeclareRobustCommand{\btens}[1]{\boldsymbol{#1}}
  \renewcommand{\btens}[1]{#1}%
\newcommand{\utens}[1]{\underline{\bvec{#1}}}
\newcommand{\ctens}[1]{\bvec{\mathcal{#1}}}
\DeclareMathOperator{\card}{card}
\DeclareMathOperator{\SPAN}{span}
\DeclareMathOperator{\DIM}{dim}
\DeclareMathOperator{\tr}{tr}
\DeclareMathOperator{\GRAD}{\bf grad}
\DeclareMathOperator{\CURL}{\bf curl}
\DeclareMathOperator{\DIV}{div}
\DeclareMathOperator{\VDIV}{\bf div}
\DeclareMathOperator{\VROT}{\bf rot}
\DeclareMathOperator{\SYM}{sym}
\DeclareMathOperator{\HESS}{\bf hess}
\newcommand{\sym}{{\rm sym}}
\newcommand{\RT}[1]{\boldsymbol{\mathcal{RT}}^{#1}}
\newcommand{\Hdivdiv}[2]{\bvec{H}(\DIV\VDIV,#1;#2)}
\newcommand{\compl}{{\rm c}}
\newcommand{\Poly}[2][]{\mathcal{P}_{#1}^{#2}}
\newcommand{\vPoly}[2][]{\cvec{P}_{#1}^{#2}}
\newcommand{\tPoly}[2][]{\ctens{P}_{#1}^{#2}}
\newcommand{\Holy}[1]{\cvec{H}^{#1}}
\newcommand{\cHoly}[1]{\cvec{H}^{\compl,#1}}
\newcommand{\edges}[1]{\mathcal{E}_{#1}}
\newcommand{\vertices}[1]{\mathcal{V}_{#1}}
\newcommand{\ET}{\edges{T}}
\newcommand{\VE}{\vertices{E}}
\newcommand{\VT}{\vertices{T}}
\newcommand{\normal}{\bvec{n}}
\newcommand{\tangent}{\bvec{t}}
\newcommand{\Mh}{\mathcal{M}_h}
\newcommand{\Th}{\mathcal{T}_h}
\newcommand{\Eh}{\mathcal{E}_h}
\newcommand{\Vh}{\mathcal{V}_h}
\DeclareMathOperator{\Ker}{Ker}
\DeclareMathOperator{\Image}{Im}
\newcommand{\norm}[2][]{\|#2\|_{#1}}
\newcommand{\seminorm}[2][]{|#2|_{#1}}
\newcommand{\vvvert}{\vert\kern-0.25ex\vert\kern-0.25ex\vert}
\newcommand{\tnorm}[2][]{\vvvert #2\vvvert_{#1}}
\newcommand{\term}{\mathfrak{T}}
\newcommand{\IV}[2][T]{\uvec{I}_{\bvec{V},#1}^{#2}}
\newcommand{\ISigma}[2][T]{\utens{I}_{\btens{\Sigma},#1}^{#2}}
\newcommand{\Csym}[1]{\btens{\mathsf{C}}_{\sym,T}^{#1}}
\newcommand{\DD}[1]{\mathsf{DD}_T^{#1}}
\newcommand{\uCsym}[2][T]{\utens{C}_{\sym,#1}^{#2}}
\newcommand{\PSigmaT}[1]{\btens{P}_{\btens{\Sigma},T}^{#1}}
\newcommand{\PSigmaE}[1]{P_{\btens{\Sigma},E}^{#1}}
\begin{document}

\title{A fully discrete plates complex on polygonal meshes with application to the Kirchhoff--Love problem}
\author[1]{Daniele A. Di Pietro}
\author[2]{J\'{e}r\^{o}me Droniou}
\affil[1]{IMAG, Univ Montpellier, CNRS, Montpellier, France, \email{daniele.di-pietro@umontpellier.fr}}
\affil[2]{School of Mathematics, Monash University, Melbourne, Australia, \email{jerome.droniou@monash.edu}}

\maketitle

\begin{abstract}
  In this work we develop a novel fully discrete version of the plates complex, an exact Hilbert complex relevant for the mixed formulation of fourth-order problems.
  The derivation of the discrete complex follows the discrete de Rham paradigm, leading to an arbitrary-order construction that applies to meshes composed of general polygonal elements.
  The discrete plates complex is then used to derive a novel numerical scheme for Kirchhoff--Love plates, for which a full stability and convergence analysis are performed.
  Extensive numerical tests complete the exposition.
  \medskip\\
  \textbf{Key words.} Discrete de Rham method, compatible discretisations, mixed formulation, plates complex, biharmonic equation, Kirchhoff--Love plates\medskip\\
  \textbf{MSC2010.} 74K20, 
  74S05, 
  65N30
\end{abstract}

\section{Introduction}

Denote by $T\subset\Real^2$ a contractible polygonal set and by $\Symm$ the space of symmetric $2\times 2$ matrices.
In this paper we develop a fully discrete counterpart of the following exact Hilbert complex \cite{Chen.Hu.ea:18}:
\begin{equation}\label{eq:continuous.complex}
  \begin{tikzcd}
    \RT{1}(T)
    \arrow[r,hook] & \bvec{H}^1(T;\Real^2)
    \arrow{r}[above=2pt]{\SYM\CURL} & \Hdivdiv{T}{\Symm}
    \arrow{r}[above=2pt]{\DIV\VDIV} & L^2(T)
    \arrow{r}[above=2pt]{0} & 0,
  \end{tikzcd}
\end{equation}
where ``$\SYM$'' denotes the symmetric part of a space or an operator, while $\RT{1}(T) \coloneq \vPoly{0}(T) + \bvec{x}\Poly{0}(T)$ is the lowest-order Raviart--Thomas space \cite{Raviart.Thomas:77}.
For a precise definition of the differential operators $\SYM\CURL$ and $\DIV\VDIV$ in Cartesian coordinates, we refer to \eqref{eq:2d.differential.operators} below.
As the complex \eqref{eq:continuous.complex} is relevant for mixed formulations of Kirchhoff--Love plates \cite{Chen.Hu.ea:18}, it will be referred to as \emph{plates complex} in what follows.
\smallskip

The fully discrete counterpart of the plates complex proposed in this work follows the Discrete de Rham (DDR) paradigm \cite{Di-Pietro.Droniou.ea:20,Di-Pietro.Droniou:21*1} (see also \cite{Di-Pietro.Droniou:21}), and it appears to be the first one designed to support arbitrary order and general meshes possibly including polygonal elements and non-matching interfaces.
The principle of the DDR paradigm is to replace both spaces and operators by discrete counterparts designed so as to be compatible with the cohomology properties of the continuous complex.
Specifically:
\begin{itemize}[left=0pt,topsep=2pt,parsep=0pt,itemsep=2pt]
\item The \emph{discrete spaces} are spanned by vectors of polynomials with components attached to mesh entities in order to mimic, through their single-valuedness, global continuity properties of the continuous spaces.
  The polynomial space for each component is selected to ensure compatibility with the continuous complex, and can be (a) a full polynomial space, (b) a space obtained applying a differential operator to a full polynomial space, or (c) its Koszul complement (see \cite[Chapter 7]{Arnold:18} on this concept).
  Cases (b) and (c) correspond to \emph{incomplete} polynomial spaces.
\item The \emph{discrete operators} are obtained in two steps: first, operator reconstructions in full polynomial spaces are built mimicking an appropriate integration by parts formula; second, whenever needed, the $L^2$-orthogonal projection on the appropriate incomplete polynomial space is taken.
\end{itemize}
In this work, these general principles are declined for the plates complex \eqref{eq:continuous.complex}.
A key additional difficulty with respect to the de Rham complex considered in \cite{Di-Pietro.Droniou.ea:20,Di-Pietro.Droniou:21*1} is that both the spaces and operators have to account for the additional algebraic constraints resulting from symmetry.
\smallskip

The novel polygonal plates complex developed in the first part of this work is applied to Kirchhoff--Love plates (see \cite{Di-Pietro.Droniou:21*2} concerning the application of the DDR paradigm to Reissner--Mindlin plates).
The support of polygonal meshes in the context of solid mechanics has several interests, including the possibility to perform adaptation through nonconforming mesh refinement or agglomeration \cite{Bassi.Botti.ea:12,Antonietti.Giani.ea:13}, as well as the support of easy mesh cutting, e.g., for the modelling of cracks.
Denote by $\Omega$ a contractible polygonal domain of $\Real^2$ corresponding to the surface of the plate in its reference configuration.
Given an orthogonal load $f:\Omega\to\Real$, we seek the \emph{moment tensor} $\btens{\sigma}:\Omega\to\Symm$ and the \emph{deflection} $u:\Omega\to\Real$ such that
\begin{subequations}\label{eq:strong.problem}
  \begin{alignat}{4}\label{eq:strong.problem:constitutive.law}
    \btens{\sigma} + \mathbb{A}\HESS u &= 0 &\qquad&\text{in $\Omega$},
    \\ \label{eq:strong.problem:equilibrium}
    -\DIV\VDIV\btens{\sigma} &= f  &\qquad&\text{in $\Omega$},
    \\ \label{eq:strong.problem:bc}
    u = \partial_{\normal} u &= 0 &\qquad&\text{on $\partial\Omega$},
  \end{alignat}
\end{subequations}
with $\HESS$ denoting the Hessian operator and $\mathbb{A}$ the fourth-order tensor defined by $\mathbb{A}\btens{\tau} = D\big[(1-\nu)\btens{\tau} + \nu\tr(\btens{\tau})\btens{I}_2\big]$ for all $\btens{\tau}\in\Symm$, where $D$ is the bending modulus, $\nu$ the Poisson ratio, and $\btens{I}_2$ is the $2\times 2$ identity matrix.
Notice that the symmetry requirement on $\btens{\sigma}$ naturally results from \eqref{eq:strong.problem:constitutive.law} owing to the Schwarz theorem.
A weak formulation of \eqref{eq:strong.problem} with symmetric moment tensors in a space embedding the continuity of the normal-normal component at interfaces underpins the Hellan--Herrmann--Johnson method, the analysis of which has been considered in several works \cite{Brezzi.Marini:75,Brezzi.Raviart:77,Brezzi.Marini.ea:80,Babuska.Osborn.ea:80,Arnold.Brezzi:85,Comodi:89,Blum.Rannacher:90,Stenberg:91,Krendl.Rafetseder.ea:16,Rafetseder.Zulehner:18}; see also \cite{Arnold.Walker:20} concerning the application to domains with curved boundaries and \cite{Neunteufel.Schoberl:19} for nonlinear shells.
Recently, a weak formulation based on moment tensors in $\Hdivdiv{\Omega}{\Symm}$, along with the corresponding finite element discretisation, has been studied in \cite{Chen.Huang:20}.
While the aforementioned work has inspired the developments of the present paper, the discrete finite element complexes constructed therein are restricted to matching triangular meshes (and, even on these meshes, differ from the one proposed here).
For the sake of completeness, we also mention that polygonal methods for the primal formulation of Kirchhoff--Love plates have been developed in \cite{Bonaldi.Di-Pietro.ea:18,Antonietti.Manzini.ea:18}.
\smallskip

The rest of this work is organised as follows.
In Section \ref{sec:setting} we establish the setting (differential operators, notation for geometric entities, polynomial spaces).
Section \ref{sec:complex} is devoted to the construction of the local discrete complex and the proof of its exactness for contractible polygonal elements.
The application to Kirchhoff--Love plates is considered in Section \ref{sec:application}, where complete stability and convergence analysis are carried out, and numerical examples on various meshes are presented.


\section{Setting}\label{sec:setting}

\subsection{Two-dimensional vector calculus operators}

Consider the real plane $\Real^2$ endowed with the Cartesian coordinate system $(x_1,x_2)$.
We will need the following two-dimensional differential operators acting on smooth enough
scalar-valued fields $q$,
vector-valued fields $\bvec{v}=\begin{pmatrix}v_1\\v_2\end{pmatrix}$,
or matrix-valued fields $\btens{\tau}=\begin{pmatrix}\tau_{11} & \tau_{12}\\ \tau_{21} & \tau_{22}\end{pmatrix}$:
\begin{equation}\label{eq:2d.differential.operators}
\begin{gathered}
  \CURL q\coloneq\begin{pmatrix}\partial_2 q\\ -\partial_1 q\end{pmatrix},
  \\
  \DIV\bvec{v}\coloneq\partial_1 v_1 + \partial_2 v_2,\quad
  \GRAD\bvec{v}\coloneq\begin{pmatrix}
  \partial_1 v_1 & \partial_2 v_1 \\
  \partial_1 v_2 & \partial_2 v_2
  \end{pmatrix},\quad
  \SYM\CURL\bvec{v}\coloneq\begin{pmatrix}
  \partial_2 v_1 & \frac{-\partial_1 v_1+\partial_2 v_2}{2} \\
  \frac{-\partial_1 v_1+\partial_2 v_2}{2} & -\partial_1 v_2
  \end{pmatrix},
  \\
  \VDIV\btens{\tau}\coloneq\begin{pmatrix}
  \partial_1\tau_{11} + \partial_2\tau_{12}
  \\
  \partial_1\tau_{21} + \partial_2\tau_{22}
  \end{pmatrix},\quad
  \VROT\btens{\tau}\coloneq\begin{pmatrix}
  \partial_2\tau_{11} - \partial_1\tau_{12} \\
  \partial_2\tau_{21} - \partial_1\tau_{22}
  \end{pmatrix},
\end{gathered}
\end{equation}
where $\partial_i$ denotes the partial derivative with respect to the $i$th coordinate.
Defining the fourth-order tensor $\mathbb{C}$ such that
\begin{equation}\label{eq:def.tensor.C}
  \mathbb{C}\btens{\tau}=\begin{pmatrix}\tau_{12} & \frac{-\tau_{11}+\tau_{22}}{2}\\ \frac{-\tau_{11}+\tau_{22}}{2} & -\tau_{21}\end{pmatrix}\qquad\forall \btens{\tau}=\begin{pmatrix}\tau_{11} & \tau_{12}\\ \tau_{21} & \tau_{22}\end{pmatrix}\in\Real^{2\times 2},
\end{equation}
we have $\SYM\CURL\bvec{v} = \mathbb{C}\GRAD\bvec{v}$.

\subsection{Notation for geometric entities}

Let $T\subset\Real^2$ be a contractible polygonal domain and denote by $\bvec{x}_T$ a point inside $T$ such that there exists a disk centered in $\bvec{x}_T$ and contained in $T$.
The sets of edges and vertices of $T$ are denoted by $\ET$ and $\VT$, respectively.
For each edge $E\in\ET$, we denote by $\VE$ the set of vertices corresponding to its endpoints and fix an orientation by prescribing a unit tangent vector $\tangent_E$.
This orientation determines two numbers $(\omega_{EV})_{V\in\VE}$ in $\{-1,+1\}$ such that $\omega_{EV}=+1$ whenever $\tangent_E$ points towards $V$.
The corresponding unit normal vector $\normal_E$ is selected so that $(\tangent_E,\normal_E)$ forms a right-handed system of coordinates, and we denote by $\omega_{TE}\in\{-1,+1\}$ the orientation of $E$ relative to $T$, defined so that $\omega_{TE}\normal_E$ points out of $T$.

\subsection{Polynomial spaces}

Given $Y\in\{T\}\cup\ET$, we denote by $\Poly{\ell}(Y)$ the space spanned by the restriction to $Y$ of two-variate polynomials of total degree $\le\ell$,  with the convention that $\Poly{-1}(Y)=\{0\}$.
The corresponding $L^2$-orthogonal projector is denoted by $\pi_{\mathcal{P},Y}^\ell$.
The symbols $\vPoly{\ell}(Y;\Real^2)$ and $\tPoly{\ell}(Y;\Symm)$ denote, respectively, vector-valued and symmetric tensor-valued functions over $T$ whose components are in $\Poly{\ell}(Y)$.
Notice that, for all $E\in\ET$, the space $\Poly{\ell}(E)$ is isomorphic to univariate polynomials of total degree $\le\ell$ (see \cite[Proposition 1.23]{Di-Pietro.Droniou:20}).
In what follows, with a little abuse of notation, both spaces are denoted by $\Poly{\ell}(E)$.
Finally, we denote by $\Poly{\ell}(\ET)$ the space of broken polynomials of total degree $\le\ell$ on $\ET$.
Vector and tensor versions of this space are denoted in boldface and the codomain is specified.


\section{A local fully discrete complex}\label{sec:complex}

\subsection{Spaces}

We will need the following decomposition of the space $\tPoly{\ell}(T;\Symm)$ of symmetric tensor-valued polynomials of total degree $\le m$:
\begin{equation}\label{eq:PolySym=Holy.oplus.cHoly}
  \begin{gathered}
    \tPoly{m}(T;\Symm)
    = \Holy{m}(T)
    \oplus\cHoly{m}(T)
    \\
    \text{
      with $\Holy{m}(T)\coloneq\HESS\Poly{m+2}(T)$ and
      $\cHoly{m}(T)\coloneq\SYM\big(
      (\bvec{x} - \bvec{x}_T)^\bot\otimes\vPoly{m-1}(T;\Real^2)
      \big)$,
    }
  \end{gathered}
\end{equation}
where $\SYM\btens{\tau}=\frac{\btens{\tau}+\btens{\tau}^\top}{2}$ is the symmetrisation operator.
The $L^2$-orthogonal projectors on $\Holy{m}(T)$ and $\cHoly{m}(T)$ are respectively denoted by $\btens{\pi}_{\cvec{H},T}^m$ and $\btens{\pi}_{\cvec{H},T}^{\compl,m}$, and we notice that, since the kernel of the Hessian operator coincides with the space of affine bivariate polynomials (that has dimension 3) and $\DIM\Poly{r}(T) = \tfrac12(r+2)(r+1)$,
\begin{equation}\label{eq:dim.Holy.cHoly}
  \DIM(\Holy{m}(T)) = \tfrac12(m+4)(m+3) - 3,\qquad
  \DIM(\cHoly{m}(T)) = m(m+1).
\end{equation}
We let, for any $k\ge 3$ and $\ell\ge 2$,
\begin{subequations}\label{eq:local.spaces}
  \begin{align}\label{eq:VT}
    \uvec{V}_T^k&\coloneq\Big\{
    \begin{aligned}[t]
      &\uvec{v}_T=
      \big(
      \bvec{v}_T,
      (\bvec{v}_E)_{E\in\ET},
      (\bvec{v}_V, \btens{G}_{\bvec{v},V})_{V\in\VT}
      \big)\st
      \\
      &\qquad\text{
        $\bvec{v}_T\in\vPoly{k-2}(T;\Real^2)$,
      }
      \\
      &\qquad\text{
        $\bvec{v}_E\in\vPoly{k-4}(E;\Real^2)$ for all $E\in\ET$,
      }
      \\
      &\qquad\text{      
        $\bvec{v}_V\in\Real^2$
        and $\btens{G}_{\bvec{v},V}\in\Real^{2\times 2}$
        for all $V\in\VT$
      }
      \Big\},
    \end{aligned}
    \\\label{eq:SigmaT}
    \utens{\Sigma}_T^\ell&\coloneq\Big\{
    \begin{aligned}[t]
      &\utens{\tau}_T
      =\big(
      \btens{\tau}_{\cvec{H},T}, \btens{\tau}_{\cvec{H},T}^\compl,
      (\tau_E,D_{\bvec{\tau},E})_{E\in\ET}
      (\btens{\tau}_V)_{V\in\VT}
      \big)\st
      \\
      &\qquad\text{
        $(\btens{\tau}_{\cvec{H},T},\btens{\tau}_{\cvec{H},T}^\compl)\in\Holy{\ell-3}(T)\times\cHoly{\ell}(T)$,
      }
      \\
      &\qquad\text{
        $\tau_E\in\Poly{\ell-2}(E)$
        and $D_{\bvec{\tau},E}\in\Poly{\ell-1}(E)$ for all $E\in\ET$,
      }
      \\
      &\qquad\text{
        $\btens{\tau}_V\in\Symm$ for all $V\in\VT$
      }
      \Big\}.
    \end{aligned}
  \end{align}
\end{subequations}
\begin{remark}[Dimensions of the local spaces]
  Recalling that, for all $E\in\ET$, $\DIM\Poly{m}(E) = m+1$, it holds
  \begin{equation}\label{eq:dim.VT}
    \begin{aligned}
      \DIM\uvec{V}_T^k
      &= k(k-1) + 2(k-3)\card(\ET) + 6\card(\VT)
      \\
      &=  k(k-1) + 2k\card(\VT),
    \end{aligned}
  \end{equation}
  where the conclusion follows observing that $\card(\VT) = \card(\ET)$.
  Further recalling \eqref{eq:dim.Holy.cHoly}, we have
  \begin{equation}\label{eq:dim.SigmaT}
    \begin{aligned}
      \DIM\utens{\Sigma}_T^\ell
      &= \tfrac12\ell(\ell+1)-3 + \ell(\ell+1) + \left[(\ell-1) + \ell\right]\card(\ET) + 3\card(\VT)
      \\
      &= \tfrac32\ell(\ell+1) + 2(\ell + 1)\card(\VT) - 3.
    \end{aligned}
  \end{equation}
\end{remark}
The interpolators on $\uvec{V}_T^k$ and $\utens{\Sigma}_T^\ell$ are, respectively,
$\IV{k}:\bvec{C}^1(\overline{T};\Real^2)\to\uvec{V}_T^k$ and
$\ISigma{\ell}:\btens{H}^2(T;\Symm)\to\utens{\Sigma}_T^\ell$
such that, for all $\bvec{v}\in\bvec{C}^1(\overline{T};\Real^2)$ and all $\btens{\tau}\in\btens{H}^2(T;\Symm)$,
\begin{align}\label{eq:IVT}
  \IV{k}\bvec{v}&\coloneq
  \Big(
  \bvec{\pi}_{\cvec{P},T}^{k-2}\bvec{v},
  (\bvec{\pi}_{\cvec{P},E}^{k-4}\bvec{v}_{|E})_{E\in\ET},
  \big(\bvec{v}(\bvec{x}_V),\GRAD\bvec{v}(\bvec{x}_V)\big)_{V\in\VT}
  \Big),
  \\ \label{eq:ISigmaT}
  \ISigma{\ell}\btens{\tau}&\coloneq
  \Big(
  \begin{aligned}[t]
  &\btens{\pi}_{\cvec{H},T}^{\ell-3}\btens{\tau},
    \btens{\pi}_{\cvec{H},T}^{\compl,\ell}\btens{\tau},
    \\
  &\big(
  \pi_{\cvec{P},E}^{\ell-2}(\btens{\tau}_{|E}\normal_E\cdot\normal_E),
  \pi_{\cvec{P},E}^{\ell-1}\big[
  \partial_{\tangent_E}(\btens{\tau}_{|E}\normal_E\cdot\tangent_E)
  + (\VDIV\btens{\tau})_{|E}\cdot\normal_E
  \big]
  \big)_{E\in\ET},
  \\
  &\big(\btens{\tau}(\bvec{x}_V)\big)_{V\in\VT}
  \Big),
  \end{aligned}
\end{align}
where $\bvec{x}_V$ denotes the coordinate vector of the vertex $V\in\VT$
while, for all $E\in\ET$, $\partial_{\tangent_E}$ denotes the derivative along the edge $E$ in the direction of $\tangent_E$.

\subsection{Differential operators and potential reconstructions}

In what follows we define the various reconstructions of differential operators and of the corresponding vector or symmetric tensor potentials.
Following standard conventions for DDR methods, full operators mapping on complete polynomial spaces are denoted in sans serif font for easier identification.

\subsubsection{Reconstructions in $\uvec{V}_T^k$}

The key integration by parts formula to reconstruct discrete counterparts of the symmetric curl and of the corresponding vector potential is the following:
For any $\bvec{v}:T\to\Real^2$ and any $\btens{\tau}:T\to\Symm$ smooth enough,
\begin{equation}\label{eq:ibp.VT}
  \int_T\bvec{v}\cdot\VROT\btens{\tau}
  = -\int_T\SYM\CURL\bvec{v}:\btens{\tau}
  + \sum_{E\in\ET}\omega_{TE}\int_{E}\btens{\tau}\,\bvec{t}_E\cdot\bvec{v}.
\end{equation}
The \emph{full symmetric curl} $\Csym{k-1}:\uvec{V}_T^k\to\tPoly{k-1}(T;\Symm)$ is such that, for all $\uvec{v}_T\in\uvec{V}_T^k$,
\begin{equation}\label{eq:full.CsymT}
  \int_T\Csym{k-1}\uvec{v}_T:\btens{\tau}
  = -\int_T\bvec{v}_T\cdot\VROT\btens{\tau}
  + \sum_{E\in\ET}\omega_{TE}\int_E\bvec{v}_{\ET}\cdot(\btens{\tau}\,\bvec{t}_E)\qquad
  \forall\btens{\tau}\in\tPoly{k-1}(T;\Symm),
\end{equation}
where $\bvec{v}_{\ET}\in\vPoly{k}(\ET;\Real^2)\cap\bvec{C}^0(\partial T;\Real^2)$ denotes the unique function in this space such that
\begin{equation}\label{eq:v.ET}
  \begin{gathered}
    \text{
      for all $E\in\ET$,
      $\bvec{\pi}_{\cvec{P},E}^{k-4}(\bvec{v}_{\ET})_{|E} = \bvec{v}_E$ 
      and $\partial_{\tangent_E}(\bvec{v}_{\ET})_{|E}(\bvec{x}_V)=\btens{G}_{\bvec{v},V}\tangent_E$
      for all $V\in\VE$,
    }
    \\
    \text{
      and $\bvec{v}_{\ET}(\bvec{x}_V) = \bvec{v}_V$ for all $V\in\VT$.
    }
  \end{gathered}
\end{equation}
The following polynomial consistency property holds:
For all $\bvec{v}\in\vPoly{k}(T;\Real^2)$, letting $\bvec{v}_{\ET}$ be defined by \eqref{eq:v.ET} with $\uvec{v}_T = \IV{k}\bvec{v}$, we have $\bvec{v}_{\ET} = \bvec{v}_{|\partial T}$.
It can be easily checked that \eqref{eq:full.CsymT} defines $\Csym{k-1}\uvec{v}_T$ uniquely owing to the Riesz representation theorem in $\tPoly{k-1}(T;\Symm)$ equipped with the standard $L^2$-product.
By design, we have the following polynomial consistency property:
\begin{equation}\label{eq:CsymT:polynomial.consistency}
  \Csym{k-1}(\IV{k}\bvec{v}) = \SYM\CURL\bvec{v}\qquad
  \forall\bvec{v}\in\vPoly{k}(T;\Real^2).
\end{equation}
The \emph{discrete symmetric curl} $\uCsym{k-1}:\uvec{V}_T^k\to\utens{\Sigma}_T^{k-1}$, acting between the discrete spaces in the com\-plex, is obtained setting, for all $\uvec{v}_T\in\uvec{V}_T^k$,
\begin{equation}\label{eq:uCsymT}
  \uCsym{k-1}\uvec{v}_T\coloneq\Big(
  \begin{aligned}[t]
    &\btens{\pi}_{\ctens{H},T}^{k-4}\big(\Csym{k-1}\uvec{v}_T\big),
    \btens{\pi}_{\ctens{H},T}^{\compl,k-1}\big(\Csym{k-1}\uvec{v}_T\big),
    \big(
    \pi_{\mathcal{P},E}^{k-3}(\partial_{\tangent_E}\bvec{v}_{\ET}\cdot\normal_E),
    \partial_{\tangent_E}^2\bvec{v}_{\ET}\cdot\tangent_E
    \big)_{E\in\ET},
    \\
    &\big(
    \mathbb{C}\btens{G}_{\bvec{v},V}
    \big)_{V\in\VT}
    \Big),
  \end{aligned}
\end{equation}
with $\mathbb{C}$ as in \eqref{eq:def.tensor.C}.
The choice of the edge terms in \eqref{eq:uCsymT} is justified by the formulas in \cite[Lemma 2.2]{Chen.Huang:20} relating trace values of the symmetric curl with tangential derivatives of the function.
Finally, we define the \emph{vector potential} $\bvec{P}_{\bvec{V},T}^k:\uvec{V}_T^k\to\vPoly{k}(T;\Real^2)$ such that, for all $\uvec{v}_T\in\uvec{V}_T^k$,
\[
\int_T\bvec{P}_{\bvec{V},T}^k\uvec{v}_T\cdot\VROT\btens{\tau}
= -\int_T\Csym{k-1}\uvec{v}_T\cdot\btens{\tau}
+ \sum_{E\in\ET}\omega_{TE}\int_E\bvec{v}_{\ET}\cdot(\btens{\tau}\,\tangent_E)\qquad
\forall\btens{\tau}\in\cHoly{k+1}(T).
\]
To check that this condition defines $\bvec{P}_{\bvec{V},T}^k\uvec{v}_T$ uniquely, use again the Riesz representation theorem for $\vPoly{k}(T;\Real^2)$ equipped with the $L^2$-product along with the fact that $\VROT:\cHoly{k+1}(T)\to\vPoly{k}(T;\Real^2)$ is an isomorphism (see \cite[Lemma 3.6]{Chen.Huang:20}).
The following polynomial consistency property holds:
\[
\bvec{P}_{\bvec{V},T}^k\big(
\IV{k}\bvec{v}
\big) = \bvec{v}\qquad
\forall\bvec{v}\in\vPoly{k}(T;\Real^2).
\]

\subsubsection{Reconstructions in $\utens{\Sigma}_T^\ell$}

The starting point is, in this case, the following integration by parts formula, corresponding to \cite[Eq.~(2.4)]{Comodi:89} (see also \cite[Eq.~(2)]{Chen.Huang:20}) and valid for all tensor-valued functions $\bvec{\tau}:T\to\Symm$ and all scalar-valued functions $q:T\to\Real$ smooth enough:
\begin{equation}\label{eq:ibp.SigmaT}
  \begin{aligned}
    \int_T\DIV\VDIV\btens{\tau}~q
    &= \int_T\btens{\tau}:\HESS q
    - \sum_{E\in\ET}\omega_{TE}\sum_{V\in\VE}\omega_{EV}(\btens{\tau}\normal_E\cdot\tangent_E)(\bvec{x}_V)~q(\bvec{x}_V)
    \\
    &\quad
    -\sum_{E\in\ET}\omega_{TE}\left[
      \int_E(\btens{\tau}\normal_E\cdot\normal_E)~\partial_{\normal_E}q
      - \int_E\left(
      \partial_{\tangent_E}(\btens{\tau}\normal_E\cdot\tangent_E)
      + \VDIV\btens{\tau}\cdot\normal_E
      \right)q
      \right].
  \end{aligned}
\end{equation}
The \emph{discrete div-div operator} $\DD{\ell-1}:\uvec{\Sigma}_T^\ell\to\Poly{\ell-1}(T)$ is such that, for all $\utens{\tau}_T\in\utens{\Sigma}_T^\ell$,
\begin{multline}\label{eq:DDT}
\int_T \DD{\ell-1}\utens{\tau}_T~q
=
\int_T\btens{\tau}_{\cvec{H},T}:\HESS q
-\sum_{E\in\ET}\omega_{TE}\sum_{V\in\VE}\omega_{EV}\,(\btens{\tau}_V\normal_E\cdot\tangent_E)\,q(\bvec{x}_V)
\\
-\sum_{E\in\ET}\omega_{TE}\left(
  \int_E\tau_E\,\partial_{\normal_E}q
  -\int_E D_{\tau,E}\,q
  \right)
  \qquad\forall q\in\Poly{\ell-1}(T).
\end{multline}
Writing \eqref{eq:DDT} for $\utens{\tau}_T = \ISigma{\ell}\btens{\tau}$, removing the $L^2$-orthogonal projectors in the right hand side, and integrating by parts, it can be easily checked that the following commutation property holds:
\begin{equation}\label{eq:DDT.commutation}
  \DD{\ell-1}\big(\ISigma{\ell}\btens{\tau}\big)
  = \pi_{\mathcal{P},T}^{\ell-1}\big(\DIV\VDIV\btens{\tau}\big)\qquad
  \forall\btens{\tau}\in\btens{H}^2(T;\Symm).
\end{equation}
The \emph{tensor potential} $\PSigmaT{\ell}:\utens{\Sigma}_T^{\ell}\to\Poly{\ell}(T;\Symm)$ is such that, for all $\utens{\tau}_T\in\utens{\Sigma}_T^\ell$ and all $(q,\btens{\upsilon})\in\Poly{\ell+2}(T)\times\cHoly{\ell}(T)$,
\begin{equation}\label{eq:P.Sigma.T}
  \begin{aligned}
    \int_T\PSigmaT{\ell}\utens{\tau}_T:\left(\HESS q + \btens{\upsilon}\right)
    &=
    \int_T\DD{\ell-1}\utens{\tau}_T\,q
    + \sum_{E\in\ET}\omega_{TE}\sum_{V\in\VE}\omega_{EV}(\btens{\tau}_V\normal_E\cdot\tangent_E)\,q(\bvec{x}_V)
    \\
    &\qquad
    + \sum_{E\in\ET}\omega_{TE}\left(
    \int_E \PSigmaE{\ell}\utens{\tau}_E\,\partial_{\normal_E} q
    - \int_E D_{\btens{\tau},E}\,q
    \right)
    + \int_T\btens{\tau}_{\cvec{H},T}^\compl:\btens{\upsilon},
  \end{aligned}
\end{equation}
where, for all $E\in\ET$, denoting by $\utens{\tau}_E\coloneq\big(\tau_E, D_{\btens{\tau},E}, (\btens{\tau}_V)_{V\in\VE}\big)$ the restriction of $\utens{\tau}_T$ to $E$, $\PSigmaE{\ell}\utens{\tau}_E\in\Poly{\ell}(E)$ is the unique polynomial that satisfies
\begin{equation}\label{eq:P.Sigma.E}
\text{
  $\PSigmaE{\ell}\utens{\tau}_E(\bvec{x}_V) = \btens{\tau}_V\normal_E\cdot\normal_E$ for all $V\in\VE$
  and $\pi_{\mathcal{P},E}^{\ell-2}\big(\PSigmaE{\ell}\utens{\tau}_E\big) = \tau_E$.
}
\end{equation}
The fact that condition \eqref{eq:P.Sigma.T} defines $\PSigmaT{\ell}\utens{\tau}_T$ uniquely follows from the Riesz representation theorem applied to $\tPoly{\ell}(T;\Symm)$ equipped with the standard $L^2$-product, along with the decomposition \eqref{eq:PolySym=Holy.oplus.cHoly} of this space and the compatibility condition expressed by the fact that both sides of \eqref{eq:P.Sigma.T} vanish for $q\in\Poly{1}(T)$ and $\btens{\upsilon}=\btens{0}$ (use \eqref{eq:DDT} and $\ell-1\ge 1$ to see that the right-hand side vanishes).

Using the commutation property \eqref{eq:DDT.commutation}, it can be checked that the following polynomial consistency properties hold:
For all $T\in\Th$, denoting by $\utens{I}_{\btens{\Sigma},E}^\ell$ the restriction of $\utens{I}_{\btens{\Sigma},T}^\ell$ to $E\in\ET$,
\begin{alignat}{6}
\label{eq:PE.poly.consistency}
\PSigmaE{\ell}(\ISigma[E]{\ell}\btens{\tau}_{|E})& = \btens{\tau}_{|E}\normal_E\cdot\normal_E
&\qquad&
\forall \btens{\tau}\in\tPoly{\ell}(T;\Symm)\,,&\quad&\forall E\in\ET,\\
\label{eq:PT.poly.consistency}
\PSigmaT{\ell}(\ISigma{\ell}\btens{\tau}) &= \btens{\tau}
&\qquad&
\forall \btens{\tau}\in\tPoly{\ell}(T;\Symm).
\end{alignat}

\subsection{Discrete complex}

Let $k\ge 3$.
The discrete version of the complex \eqref{eq:continuous.complex} is obtained arranging the spaces \eqref{eq:local.spaces} (with $\ell=k-1$) in a sequence and connecting them with the discrete operators:
\begin{equation}\label{eq:discrete.complex}
  \begin{tikzcd}
    \RT{1}
    \arrow{r}[above=2pt]{\IV{k}} & \uvec{V}_T^k
    \arrow{r}[above=2pt]{\uCsym{k-1}} & \utens{\Sigma}_T^{k-1}
    \arrow{r}[above=2pt]{\DD{k-2}} & \Poly{k-2}(T)
    \arrow{r}[above=2pt]{0} & 0.
  \end{tikzcd}
\end{equation}
The following remark will play a crucial role in establishing key properties of the discrete complex.
\begin{remark}[Hermite lifting on a subtriangulation]\label{rem:hermite-lifting}
  An element $\uvec{v}_T=\big(
      \bvec{v}_T,
      (\bvec{v}_E)_{E\in\ET},
      (\bvec{v}_V, \btens{G}_{\bvec{v},V})_{V\in\VT}
      \big)\in\uvec{V}_T^k$ can be lifted into a function over $T$ as described hereafter.
  Denote by $\mathfrak{T}_T$ a matching simplicial subtriangulation of $T$ the trace of which on $\partial T$ coincides with $\ET$, and let $\bvec{\mathfrak{H}}_T^k$ be the vector-valued Hermite finite element space of degree $k$ built on $\mathfrak{T}_T$ (see, e.g., \cite[Section 3.2]{Brenner.Scott:08} for the real-valued Hermite space).
  We take $\tilde{\bvec{v}}$ as the element of $\bvec{\mathfrak{H}}_T^k$ defined setting the degrees of freedom on $\partial T$ equal to $\big((\bvec{v}_E)_{E\in\ET},(\bvec{v}_V,\btens{G}_{\bvec{v},V})_{V\in\VT}\big)$ and interpolating the internal degrees of freedom (which determine in particular the projection on $\vPoly{k-3}(\mathfrak{t})$ for each triangle $\mathfrak{t}\in\mathfrak{T}_T$) from $\bvec{v}_T$.
  It follows that 
  \begin{equation}\label{eq:prop.lifting}
    \text{
      $\tilde{\bvec{v}}_{|\partial T} = \bvec{v}_{\ET}$,
      $\bvec{\pi}_{\cvec{P},T}^{k-3}\tilde{\bvec{v}} = \bvec{\pi}_{\cvec{P},T}^{k-3}\bvec{v}_T$,
      and $\mathbb{C}\btens{G}_{\bvec{v},V}=\mathbb{C}\GRAD\tilde{\bvec{v}}(\bvec{x}_V)=\SYM\CURL\tilde{\bvec{v}}(\bvec{x}_V)$ for all $V\in\VT$.
    }
  \end{equation}
  We note that the following integration-by-parts holds:
  For all $q\in C^1(\overline{T})$,
  \begin{multline}\label{eq:hermite-lifting:ibp}
    \int_T\SYM\CURL\tilde{\bvec{v}}:\HESS q
    - \sum_{E\in\ET}\omega_{TE}\sum_{V\in\VE}\omega_{EV}\SYM\CURL\tilde{\bvec{v}}(\bvec{x}_V)\normal_E\cdot\tangent_E~q(\bvec{x}_V)
    \\
    - \sum_{E\in\ET}\omega_{TE}\left[
      \int_E(\partial_{\tangent_E}\tilde{\bvec{v}}_{|E}\cdot\normal_E)\,\partial_{\normal_E} q
      - \int_E(\partial_{\tangent_E}^2\tilde{\bvec{v}}_{|E}\cdot\tangent_E)\, q
      \right] = 0.
  \end{multline}
  To establish this relation, it suffices to apply \eqref{eq:ibp.SigmaT} on each triangle $\mathfrak{t}\in\mathfrak{T}_T$ with $\bvec{\tau}=\SYM\CURL\tilde{\bvec{v}}_{|\mathfrak{t}}$, use $\DIV\DIV(\SYM\CURL\tilde{\bvec{v}}_{|\mathfrak{t}}) = 0$ and \cite[Lemma 2.2]{Chen.Huang:20} to transform the boundary terms into tangential derivatives of $\tilde{\bvec{v}}_{|\mathfrak{t}}$, sum up the resulting formulas over $\mathfrak{t}\in\mathfrak{T}_T$, and notice that the terms on the internal boundaries $\cup_{\mathfrak{t}\in\mathfrak{T}_T}\partial \mathfrak{t}\cap T$ cancel out since the tangential derivatives on the triangle edges and the nodal values of the gradients of $\tilde{\bvec{v}}$ are continuous across the triangles.
  Additionally, using the first two relations in \eqref{eq:prop.lifting} in \eqref{eq:full.CsymT} with $\btens{\tau}\in\tPoly{k-2}(T;\Symm)$ and using the integration by parts \eqref{eq:ibp.VT} (which is valid since $\tilde{\bvec{v}}\in \bvec{H}^1(T;\Real^2)$), it holds
  \begin{equation}\label{eq:hermite-lifting:curl}
    \btens{\pi}^{k-2}_{\ctens{P},T}(\Csym{k-1}\uvec{v}_T)
    = \btens{\pi}^{k-2}_{\ctens{P},T}(\SYM\CURL\tilde{\bvec{v}}).
  \end{equation}
\end{remark}

\begin{theorem}[Exactness]\label{thm:exactness}
  With $T$ contractible polygon, the complex \eqref{eq:discrete.complex} is exact.
\end{theorem}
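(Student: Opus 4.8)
The plan is to verify the four ingredients of exactness of \eqref{eq:discrete.complex} separately: (i) $\IV{k}$ is injective on $\RT{1}(T)$; (ii) $\Image(\IV{k}|_{\RT{1}(T)}) = \Ker\uCsym{k-1}$; (iii) $\Image\uCsym{k-1} = \Ker\DD{k-2}$; (iv) $\DD{k-2}$ is onto $\Poly{k-2}(T)$. Ingredients (i) and (iv) are immediate: since $k\ge 3$, $\bvec{\pi}_{\cvec{P},T}^{k-2}$ is the identity on $\RT{1}(T)\subset\vPoly{1}(T;\Real^2)$, giving (i); for (iv), given $q\in\Poly{k-2}(T)$ I would pick $g\in\Poly{k}(T)$ with $\partial_1\partial_1 g = q$, take $\btens{\tau}$ to be the symmetric tensor whose only nonzero entry is the $(1,1)$ one, equal to $g$, so that $\DIV\VDIV\btens{\tau} = q$, and conclude by the commutation property \eqref{eq:DDT.commutation} that $\DD{k-2}(\ISigma{k-1}\btens{\tau}) = \pi_{\mathcal{P},T}^{k-2}q = q$ (alternatively, any $\btens{\tau}\in\Hdivdiv{T}{\Symm}$ with $\DIV\VDIV\btens{\tau}=q$ works by exactness of \eqref{eq:continuous.complex}). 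The remaining two ingredients are obtained by first checking that \eqref{eq:discrete.complex} is a complex, then establishing (ii) directly, and finally deducing (iii) from a dimension count.

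\textbf{Complex property.} For $\bvec{w}\in\RT{1}(T)$ one computes $\SYM\CURL\bvec{w} = \btens{0}$ while $\GRAD\bvec{w}$ is a scalar multiple of $\btens{I}_2$, hence lies in $\Ker\mathbb{C}$; combining this with the polynomial consistency \eqref{eq:CsymT:polynomial.consistency}, the expression \eqref{eq:uCsymT} for $\uCsym{k-1}$, and $\bvec{w}_{\ET} = \bvec{w}_{|\partial T}$ (polynomial consistency of \eqref{eq:v.ET}), every component of $\uCsym{k-1}(\IV{k}\bvec{w})$ vanishes. For $\DD{k-2}\circ\uCsym{k-1} = 0$, I would insert $\utens{\tau}_T = \uCsym{k-1}\uvec{v}_T$ into \eqref{eq:DDT} and test with $q\in\Poly{k-2}(T)$: since $\HESS q\in\Holy{k-4}(T)\subset\tPoly{k-2}(T;\Symm)$, the volume term $\int_T\btens{\pi}_{\ctens{H},T}^{k-4}(\Csym{k-1}\uvec{v}_T):\HESS q$ equals $\int_T\Csym{k-1}\uvec{v}_T:\HESS q$, which by \eqref{eq:hermite-lifting:curl} equals $\int_T\SYM\CURL\tilde{\bvec{v}}:\HESS q$, with $\tilde{\bvec{v}}$ the Hermite lift of $\uvec{v}_T$ (Remark~\ref{rem:hermite-lifting}); rewriting this via the Hermite-lifting integration by parts \eqref{eq:hermite-lifting:ibp} produces precisely the boundary terms appearing in \eqref{eq:DDT}, because $\tilde{\bvec{v}}_{|\partial T} = \bvec{v}_{\ET}$, $\SYM\CURL\tilde{\bvec{v}}(\bvec{x}_V) = \mathbb{C}\btens{G}_{\bvec{v},V}$ (see \eqref{eq:prop.lifting}), and the edge and vertex components of $\uCsym{k-1}\uvec{v}_T$ in \eqref{eq:uCsymT} are designed to match (the edge $L^2$-projection being harmless against $\partial_{\normal_E}q\in\Poly{k-3}(E)$).

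\textbf{Exactness at $\uvec{V}_T^k$ (main step).} Let $\uvec{v}_T\in\Ker\uCsym{k-1}$ and let $\tilde{\bvec{v}}\in\bvec{\mathfrak{H}}_T^k\subset\bvec{H}^1(T;\Real^2)$ be its Hermite lift. The first goal is to show $\SYM\CURL\tilde{\bvec{v}} = \btens{0}$. From the vanishing of the edge and vertex components of $\uCsym{k-1}\uvec{v}_T$ and from \eqref{eq:prop.lifting} one gets $\SYM\CURL\tilde{\bvec{v}}(\bvec{x}_V) = \mathbb{C}\btens{G}_{\bvec{v},V} = \btens{0}$ for all $V\in\VT$, and $\partial_{\tangent_E}^2\tilde{\bvec{v}}_{|E}\cdot\tangent_E = 0$ and $\pi_{\mathcal{P},E}^{k-3}(\partial_{\tangent_E}\tilde{\bvec{v}}_{|E}\cdot\normal_E) = 0$ for all $E\in\ET$; feeding these into \eqref{eq:hermite-lifting:ibp} — which, read through the edge-trace identities of \cite[Lemma~2.2]{Chen.Huang:20}, expresses that $\SYM\CURL\tilde{\bvec{v}}\in\Hdivdiv{T}{\Symm}$ is annihilated by $\DIV\VDIV$ and has the relevant trace data killed — and exploiting exactness of the continuous complex \eqref{eq:continuous.complex}, I would conclude $\SYM\CURL\tilde{\bvec{v}} = \btens{0}$, hence $\tilde{\bvec{v}}\in\RT{1}(T)$. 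It then remains to check $\uvec{v}_T = \IV{k}\tilde{\bvec{v}}$ componentwise: the edge polynomials and vertex values come from $\tilde{\bvec{v}}_{|\partial T} = \bvec{v}_{\ET}$ in \eqref{eq:prop.lifting}; the identity $\GRAD\tilde{\bvec{v}}(\bvec{x}_V) = \btens{G}_{\bvec{v},V}$ holds because a $2\times 2$ matrix is determined by its image under $\mathbb{C}$ together with its action on one nonzero edge tangent at $V$ (as $\Ker\mathbb{C}=\Real\btens{I}_2$), both data agreeing for $\GRAD\tilde{\bvec{v}}(\bvec{x}_V)$ and $\btens{G}_{\bvec{v},V}$; and $\bvec{\pi}_{\cvec{P},T}^{k-2}\tilde{\bvec{v}} = \bvec{v}_T$ follows by testing \eqref{eq:full.CsymT} against $\btens{\tau}\in\cHoly{k-1}(T)$, integrating by parts with \eqref{eq:ibp.VT} and using $\SYM\CURL\tilde{\bvec{v}} = \btens{0}$ and the vanishing of $\btens{\pi}_{\ctens{H},T}^{\compl,k-1}\Csym{k-1}\uvec{v}_T$ to get $\int_T(\tilde{\bvec{v}}-\bvec{v}_T)\cdot\VROT\btens{\tau} = 0$, whence $\tilde{\bvec{v}} = \bvec{v}_T$ because $\VROT:\cHoly{k-1}(T)\to\vPoly{k-2}(T;\Real^2)$ is an isomorphism (\cite[Lemma~3.6]{Chen.Huang:20} at the relevant degree) and $\tilde{\bvec{v}}-\bvec{v}_T\in\vPoly{k-2}(T;\Real^2)$.

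\textbf{Closing by a dimension count, and the main obstacle.} Using \eqref{eq:dim.VT} and \eqref{eq:dim.SigmaT} (at $\ell = k-1$) together with $\DIM\RT{1}(T) = 3$ and $\DIM\Poly{k-2}(T) = \tfrac12 k(k-1)$, the alternating sum $\DIM\RT{1}(T) - \DIM\uvec{V}_T^k + \DIM\utens{\Sigma}_T^{k-1} - \DIM\Poly{k-2}(T)$ vanishes. Since \eqref{eq:discrete.complex} is a complex and (i), (ii), (iv) have been established, the standard rank–nullity bookkeeping then forces $\Image\uCsym{k-1} = \Ker\DD{k-2}$, i.e.\ (iii), completing the proof. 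I expect the genuine difficulty to be the first half of the main step, namely deducing $\SYM\CURL\tilde{\bvec{v}} = \btens{0}$ — and not merely that it is $\DIV\VDIV$-free — from the vanishing of the \emph{incomplete}-space components of $\uCsym{k-1}\uvec{v}_T$: one cannot argue through ``$\Csym{k-1}\uvec{v}_T = \btens{0}$'', since $\uCsym{k-1}$ retains only the $\btens{\pi}_{\ctens{H},T}^{k-4}$ and $\btens{\pi}_{\ctens{H},T}^{\compl,k-1}$ parts of the full symmetric curl, so the boundary data of the Hermite lift must be exploited carefully through \eqref{eq:hermite-lifting:ibp} and the continuous Poincaré-type result.
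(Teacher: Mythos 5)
Your overall skeleton (establish the complex property and the two easy exactness points, do the hard inclusion $\Ker\uCsym{k-1}\subset\IV{k}\RT{1}(T)$ directly, and finish by a dimension count for exactness at $\utens{\Sigma}_T^{k-1}$) matches the paper's strategy, and the parts you carry out in detail — surjectivity of $\DD{k-2}$ via the commutation property \eqref{eq:DDT.commutation}, the proof that $\DD{k-2}\circ\uCsym{k-1}=0$ through the Hermite lift, \eqref{eq:hermite-lifting:curl}, and \eqref{eq:hermite-lifting:ibp}, and the arithmetic of the alternating dimension sum — are all correct and essentially identical to the paper.

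The genuine gap is exactly where you flag it, in the main step. The implication you want — that $\SYM\CURL\tilde{\bvec{v}}=\btens{0}$ follows from $\DIV\VDIV(\SYM\CURL\tilde{\bvec{v}})=0$, vanishing of certain (moreover only projected) boundary data, and exactness of the continuous complex — is not available. Exactness of \eqref{eq:continuous.complex} says $\Ker\DIV\VDIV=\Image\SYM\CURL$, which for $\btens{\tau}=\SYM\CURL\tilde{\bvec{v}}$ is vacuous; it is not a Poincar\'e-type result, and no such result holds on $\Hdivdiv{T}{\Symm}$: taking $\bvec{v}$ equal to any smooth field vanishing to second order on $\partial T$ with $\bvec{v}\notin\RT{1}(T)$ produces $\SYM\CURL\bvec{v}\ne\btens{0}$ with $\DIV\VDIV(\SYM\CURL\bvec{v})=0$ and all $\Hdivdiv$-traces equal to zero. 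Your conditions are even weaker, since from \eqref{eq:uCsymT} one only controls $\pi_{\mathcal{P},E}^{k-3}(\partial_{\tangent_E}\tilde{\bvec{v}}_{|E}\cdot\normal_E)$ and the two projections $\btens{\pi}_{\ctens{H},T}^{k-4}$, $\btens{\pi}_{\ctens{H},T}^{\compl,k-1}$ of $\Csym{k-1}\uvec{v}_T$, which together do not determine $\Csym{k-1}\uvec{v}_T$ (since $\Holy{k-4}(T)\oplus\cHoly{k-1}(T)\subsetneq\tPoly{k-1}(T;\Symm)$ for $k\ge4$), let alone the piecewise polynomial $\SYM\CURL\tilde{\bvec{v}}$.

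The paper avoids this altogether by constructing the candidate $\bvec{w}\in\RT{1}(T)$ directly from the algebra of the vanishing conditions. Concretely, $\mathbb{C}\btens{G}_{\bvec{v},V}=\btens{0}$ and $\Ker\mathbb{C}=\Real\btens{I}_2$ force $\btens{G}_{\bvec{v},V}=\beta_V\btens{I}_2$; the condition $\partial_{\tangent_E}^2(\bvec{v}_{\ET}\cdot\tangent_E)=0$ propagated around $\partial T$ forces all $\beta_V$ equal to a single $\beta$; one sets $\bvec{w}(\bvec{x})=\bvec{\alpha}+\beta\bvec{x}$ and, walking the boundary edge by edge, checks $\bvec{w}_{|\partial T}=\bvec{v}_{\ET}$ (here the projected edge condition $\pi_{\mathcal{P},E}^{k-3}(\partial_{\tangent_E}\bvec{v}_{\ET}\cdot\normal_E)=0$ together with the two vertex derivative constraints is exactly what pins down the degree-$(k-1)$ polynomial $\partial_{\tangent_E}(\bvec{v}_{\ET}\cdot\normal_E)$ to zero); finally, the $\cHoly{k-1}$ component of $\uCsym{k-1}\uvec{v}_T=\btens{0}$ tested against \eqref{eq:full.CsymT} and the isomorphism $\VROT:\cHoly{k-1}(T)\to\vPoly{k-2}(T;\Real^2)$ identify $\bvec{v}_T$ with $\bvec{w}$. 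Your closing step with $\VROT$ is the same as the paper's; what is missing in your plan is this explicit construction of $\bvec{w}$, which replaces the unavailable continuous Poincar\'e argument. (A posteriori one then does have $\tilde{\bvec{v}}=\bvec{w}$ and $\SYM\CURL\tilde{\bvec{v}}=\btens{0}$, but as an output of the construction, not an input.)
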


\begin{proof}
  We have to prove the following relations:
  \begin{align}
    \IV{k}\RT{1} &= \Ker\uCsym{k-1}, \label{eq:IVT.RT1=Ker.CsymT} \\
    \Image \DD{k-2} &= \Poly{k-2}(T), \label{eq:Im.DDT=P1} \\
    \Image\uCsym{k-1} &= \Ker \DD{k-2}. \label{eq:Im.CsymT=Ker.DDT}
  \end{align}
  \underline{1. \emph{Proof of \eqref{eq:IVT.RT1=Ker.CsymT}.}}
  We start by proving that
  \begin{equation}\label{eq:IVT.RT1.subset.Ker.CsymT}
    \IV{k}\RT{1} \subset \Ker\uCsym{k-1}.
  \end{equation}
  To this end, we take a generic $\bvec{w}\in\RT{1}$, set $\hat{\uvec{w}}_T\coloneq\IV{k}\bvec{w}$, and show that $\uCsym{k-1}\hat{\uvec{w}}_T = \utens{0}$.
  By the polynomial consistency property \eqref{eq:CsymT:polynomial.consistency} along with $\RT{1}(T)\subset\vPoly{k}(T;\Real^2)$, it holds $\Csym{k-1}\hat{\uvec{w}}_T = \SYM\CURL\bvec{w} = \bvec{0}$, where the conclusion follows using the leftmost portion of the continuous complex \eqref{eq:continuous.complex}.
  As a consequence, the internal components of $\uCsym{k-1}\hat{\uvec{w}}_T$ vanish.
  
  Let us now consider the boundary components.
  We start by noticing that, by polynomial consistency of the construction \eqref{eq:v.ET}, $\hat{\bvec{w}}_{\ET} = \bvec{w}_{|\partial T}$.
  Moreover, for all $E\in\ET$, $\bvec{w}_{|E}\cdot\normal_E\in\Poly{0}(E)$ (see \cite[Proposition 8]{Di-Pietro.Droniou:21*1}), so that $\partial_{\tangent_E}\hat{\bvec{w}}_{\ET}\cdot\normal_E = \partial_{\tangent_E}\bvec{w}_{|E}\cdot\normal_E = 0$, i.e., the first edge component of $\uCsym{k-1}\hat{\uvec{w}}_T$ vanishes.
  Additionally, since $\hat{\bvec{w}}_{\ET}\cdot\tangent_E = \bvec{w}_{|E}\cdot\tangent_E\in\Poly{1}(E)$, the function $\partial_{\tangent_E}^2\hat{\bvec{w}}_{\ET}\cdot\tangent_E$ also vanishes.
  Finally, vertex components vanish since $\mathbb{C}\GRAD\bvec{w}(\bvec{x}_V) = \SYM\CURL\bvec{w}(\bvec{x}_V) = \btens{0}$ by \eqref{eq:continuous.complex}, thus proving \eqref{eq:IVT.RT1.subset.Ker.CsymT}
  \medskip
  
  We next show that
  \begin{equation}\label{eq:Ker.CsymT.subset.IVT.RT1}
    \Ker\uCsym{k-1} \subset \IV{k}\RT{1},
  \end{equation}
  i.e., for all $\uvec{v}_T\in\uvec{V}_T^k$ such that
  \begin{equation}\label{eq:CsymT=0}
    \uCsym{k-1}\uvec{v}_T = \utens{0},
  \end{equation}
  there exists $\bvec{w}\in\RT{1}(T)$ such that $\uvec{v}_T = \IV{k}\bvec{w}$.
  We start by examining the boundary function $\bvec{v}_{\ET}$ defined by \eqref{eq:v.ET}.
  For any $V\in\VT$, the condition $\mathbb{C}\btens{G}_{\bvec{v},V}=\btens{0}$ resulting from \eqref{eq:CsymT=0} and the definition \eqref{eq:def.tensor.C} of $\mathbb{C}$ imply the existence of $(\beta_V)_{V\in\VT}\in\Real^{\card(\VT)}$ such that
  \begin{equation}\label{eq:CsymT=0.V}
    \btens{G}_{\bvec{v},V} = \beta_V \btens{I}_2\qquad\forall V\in\VT,
  \end{equation}
  with $\btens{I}_2$ denoting as before the $2\times 2$ identity matrix.
  Let now $E\in\ET$ and denote by $V_1,\,V_2$ its vertices numbered so that $\tangent_E = \bvec{x}_{V_2} - \bvec{x}_{V_1}$.
  The conditions relative to $E$ resulting from \eqref{eq:CsymT=0} translate as follows:
  \begin{gather}\label{eq:CsymT=0.E:3}
    \pi_{\mathcal{P},E}^{k-3}\left(\partial_{\tangent_E}(\bvec{v}_{\ET}\cdot\normal_E)\right)
    = 0,
    \\ \label{eq:CsymT=0.E:2}
    \partial_{\tangent_E}^2(\bvec{v}_{\ET}\cdot\tangent_E)
    = 0
    \implies
    \bvec{v}_{\ET}\cdot\tangent_E\in\Poly{1}(E).
  \end{gather}
  Combining \eqref{eq:CsymT=0.E:2} (which implies, in particular, that $\partial_{\tangent_E}(\bvec{v}_{\ET}\cdot\tangent_E)$ is constant along $E$) with the definition \eqref{eq:v.ET} of $\bvec{v}_{\ET}$, we obtain
  \[
  \btens{G}_{\bvec{v},V_1}\tangent_E\cdot \tangent_E=\partial_{\tangent_E}(\bvec{v}_{\ET}\cdot\tangent_E)_{|E}(\bvec{x}_{V_1})=\partial_{\tangent_E}(\bvec{v}_{\ET}\cdot\tangent_E)_{|E}(\bvec{x}_{V_2})=\btens{G}_{\bvec{v},V_2}\tangent_E\cdot \tangent_E.
  \]
  Owing to \eqref{eq:CsymT=0.V} this yields $\beta_{V_1}=\beta_{V_2}$. 
  Repeating this reasoning for all edges $E\in\ET$, we infer the existence of $\beta\in\Real$ such that
  \begin{equation}\label{eq:betaV=beta}
    \text{
      $\beta_V = \beta$ and
      $\partial_{\tangent_E}(\bvec{v}_{\ET})_{|E}(\bvec{x}_V)=\beta\tangent_E$ for all $V\in\VT$.
    }
  \end{equation}
  
  We now exhibit a function $\bvec{w}\in\RT{1}(T)$ such that, for all $E\in\ET$,
  \begin{equation}\label{eq:conditions.RT1}
    \text{      
      $\bvec{w}_{|E}\cdot\tangent_E = \bvec{v}_{\ET}\cdot\tangent_E$ and
      $\bvec{w}(\bvec{x}_V)\cdot\normal_E = \bvec{v}_{\ET}(\bvec{x}_V)\cdot\normal_E$ for all $\bvec{x}_V\in\VE$.
    }
  \end{equation}
  We take $\bvec{w}$ of the form $\bvec{w}(\bvec{x}) = \bvec{\alpha} + \beta\bvec{x}$ with $\bvec{\alpha}\in\Real^2$ such that, for a fixed vertex $V_0\in\VT$, $\bvec{w}(\bvec{x}_{V_0}) = \bvec{v}_{V_0}$ and $\beta$ as in \eqref{eq:betaV=beta}.
  Let us number the remaining vertices coherently with the orientation of $T$, and denote by $E_i$ the edge of endpoints $\bvec{x}_{V_i},\,\bvec{x}_{V_{i+1}}$, $i\ge 0$.
  Set $i=0$.
  Since $\bvec{w}_{|E_i}\cdot\normal_{E_i}$ is constant along $E_i$ by \cite[Proposition 8]{Di-Pietro.Droniou:21*1}, $\bvec{w}(\bvec{x}_{V_{i+1}})\cdot\normal_{E_i} = \bvec{w}(\bvec{x}_{V_i})\cdot\normal_{E_i} = \bvec{v}_{V_i}\cdot\normal_{E_i}$.
  Moreover, for any $\bvec{x}\in E_i$,
  \[
  \bvec{w}(\bvec{x})\cdot\tangent_{E_i}
  = \left[
    \bvec{w}(\bvec{x}_{V_i}) + \beta(\bvec{x} - \bvec{x}_{V_i})
    \right]\cdot\tangent_{E_i}
  = \left[
    \bvec{v}_{V_i} + \beta(\bvec{x} - \bvec{x}_{V_i})
    \right]\cdot\tangent_{E_i}
  = \bvec{v}_{\ET}(\bvec{x})\cdot\tangent_E,  
  \]
  the conclusion following by \eqref{eq:CsymT=0.E:2} combined with \eqref{eq:betaV=beta}.
  These conditions are precisely \eqref{eq:conditions.RT1} for $E_i = E_0$.
  Iterating this reasoning for $i=1,\ldots,\card(\ET)-1$ allows one to prove \eqref{eq:conditions.RT1} for all $E\in\ET$.
  \smallskip
  
  We next prove that
  \begin{equation}\label{eq:w.pT=vET}
    \text{for all $E\in\ET$, $\bvec{w}_{|E} = \bvec{v}_{\ET}$ on $E$.}
  \end{equation}
  By \eqref{eq:conditions.RT1}, the tangential component of $\bvec{w}$ coincides with that of $\bvec{v}_{\ET}$ on every edge.
  It only remains to prove the equality of their normal components.
  Let $E\in\ET$ be the edge of vertices $V_1,\,V_2$ and set, for the sake of brevity, $\phi_E\coloneq\bvec{v}_{\ET|E}\cdot\normal_E\in\Poly{k}(E)$.
  Combining \eqref{eq:betaV=beta} and \eqref{eq:CsymT=0.E:3}, we infer
  \begin{gather*}    
    \text{
      $\partial_{\tangent_E}\phi_E(\bvec{x}_{V_i})
      = \partial_{\tangent_E}(\bvec{v}_{\ET})_{|E}(\bvec{x}_{V_i})\cdot\normal_E
      = \beta\tangent_E\cdot\normal_E = 0$ for all $i\in\{1,2\}$
      and $\pi_{\mathcal{P},E}^{k-3}(\partial_{\tangent_E}\phi_E) = 0$.
    }
  \end{gather*}
  It is easy to check that these conditions along with $\partial_{\tangent_E}\phi_E\in\Poly{k-1}(E)$ enforce $\partial_{\tangent_E}\phi_E=0$, so that $\phi_E\in\Poly{0}(E)$ is identically equal to $\bvec{w}_{|E}\cdot\normal_E$ on $E$ by the second condition in \eqref{eq:conditions.RT1}, thus concluding the proof of \eqref{eq:w.pT=vET}.
  \smallskip
  
  Respectively denoting by $\uvec{v}_{\partial T}$ and $\IV[\partial T]{k}$ the restrictions to $\partial T$ of $\uvec{v}_T$ and $\IV{k}$ (the latter being obtained collecting only the boundary components of \eqref{eq:IVT}), \eqref{eq:w.pT=vET} and \eqref{eq:CsymT=0.V} imply
  \begin{equation}\label{eq:vpT=IVT.w}
    \uvec{v}_{\partial T} = \IV[\partial T]{k}\bvec{w}_{|\partial T}.
  \end{equation}  
  To conclude the proof of \eqref{eq:Ker.CsymT.subset.IVT.RT1}, it suffices to show that $\bvec{v}_T = \bvec{\pi}_{\cvec{P},T}^{k-2}\bvec{w}=\bvec{w}$ (notice that $k-2\ge 1$ since $k\ge 3$).
  The condition $\btens{\pi}_{\ctens{H},T}^{\compl,k-1}\big(\Csym{k-1}\uvec{v}_T\big) = \btens{0}$ resulting from \eqref{eq:CsymT=0} implies, accounting for \eqref{eq:full.CsymT}:
  For all $\btens{\tau}\in\cHoly{k-1}(T)$,
  \[
  \int_T\bvec{v}_T\cdot\VROT\btens{\tau}
  = \sum_{E\in\ET}\omega_{TE}\int_E\bvec{v}_{\ET}\cdot(\btens{\tau}\,\tangent_E)
  = \sum_{E\in\ET}\omega_{TE}\int_E\bvec{w}_{|E}\cdot(\btens{\tau}\,\tangent_E)
  = \int_T \bvec{w}\cdot\VROT\btens{\tau},
  \]
  where we have used \eqref{eq:w.pT=vET} in the second equality and the integration by parts formula \eqref{eq:ibp.VT} to conclude.
  Since $\VROT:\cHoly{k-1}(T)\to\vPoly{k-2}(T;\Real^2)$ is an isomorphism, this shows that $\bvec{v}_T = \bvec{w}$, hence, recalling \eqref{eq:vpT=IVT.w}, $\uvec{v}_T = \IV{k}\bvec{w}$, proving \eqref{eq:Ker.CsymT.subset.IVT.RT1}.
  \medskip\\
  \underline{2. \emph{Proof of \eqref{eq:Im.DDT=P1}.}}
  Given $q\in\Poly{k-2}(T)\subset L^2(T)$, by surjectivity of $\DIV\VDIV$ (see \cite{Chen.Huang:18}) there exists $\btens{\tau}_q\in \btens{H}^2(T;\Symm)$ such that $\DIV\VDIV\btens{\tau}_q = q$.
  The surjectivity of $\DD{k-2}$ is then proved observing that, by \eqref{eq:DDT.commutation},
  \[
  \DD{k-2}\big(\ISigma{k-1}\btens{\tau}_q\big)
  = \pi_{\mathcal{P},T}^{k-2}\big(\DIV\VDIV\btens{\tau}_q\big)
  = \pi_{\mathcal{P},T}^{k-2} q
  = q.
  \]
  \medskip\\
  \underline{3. \emph{Proof of \eqref{eq:Im.CsymT=Ker.DDT}.}}
  We start by proving that
  \begin{equation}\label{eq:Im.CsymT.subset.Ker.DDT}
    \Image\uCsym{k-1} \subset \Ker \DD{k-2}.
  \end{equation}
  To this end, let $\uvec{v}_T\in\uvec{V}_T^k$.
  Writing the definition \eqref{eq:DDT} of $\DD{k-2}$ with $\utens{\tau}_T = \uCsym{k-1}\uvec{v}_T$ and recalling the definitions \eqref{eq:uCsymT} of $\uCsym{k-1}$ and \eqref{eq:v.ET} of $\bvec{v}_{\ET}$, we have, for all $q\in\Poly{k-2}(T)$,
  \begin{equation}\label{cestdicikonpar}
  \begin{aligned}
    \int_T \DD{k-2}{}&\big(\uCsym{k-1}\uvec{v}_T\big)~q\\
    &=
    \int_T \cancelto{\btens{\pi}_{\cvec{P},T}^{k-2}}{\btens{\pi}_{\ctens{H},T}^{k-4}}\big(\Csym{k-1}\uvec{v}_T\big):\HESS q
    -\sum_{E\in\ET}\omega_{TE}\sum_{V\in\VE}\omega_{EV}(\mathbb{C}\btens{G}_{\bvec{v},V})\normal_E\cdot\tangent_E~q(\bvec{x}_V)
    \\
    &\quad
    -\sum_{E\in\ET}\omega_{TE}\left(
    \int_E\cancel{\pi_{\mathcal{P},E}^{k-3}}(\partial_{\tangent_E}\bvec{v}_{\ET}\cdot\normal_E)~\partial_{\normal_E}q
    -\int_E \partial_{\tangent_E}^2\bvec{v}_{\ET}\cdot\tangent_E~q
    \right),
  \end{aligned}
  \end{equation}
  where the substitution of $\btens{\pi}_{\ctens{H},T}^{k-4}$ with $\btens{\pi}_{\cvec{P},T}^{k-2}$ is made possible by the fact that $\HESS q\in\Holy{k-4}(T)\subset\tPoly{k-2}(T)$,
  while the removal of $\pi_{\mathcal{P},E}^{k-3}$ is a consequence of $\partial_{\normal_E}q_{|E}\in\Poly{k-3}(E)$.
  Denote by $\tilde{\bvec{v}}\in\bvec{H}^2(T;\Real^2)$ the lifting of $\uvec{v}_T$ in a Hermite finite element space on a subtriangulation of $T$ obtained as in Remark \ref{rem:hermite-lifting}.
  Recalling \eqref{eq:hermite-lifting:curl} and \eqref{eq:prop.lifting}, \eqref{cestdicikonpar} becomes
  \[
  \begin{aligned}
    \int_T \DD{k-2}{}\big(\uCsym{k-1}\uvec{v}_T\big)~q
    &=
    \int_T\SYM\CURL\tilde{\bvec{v}}:\HESS q
    -\!\!\sum_{E\in\ET}\!\omega_{TE}\!\!\sum_{V\in\VE}\!\omega_{EV}\SYM\CURL\tilde{\bvec{v}}(\bvec{x}_V)\normal_E\cdot\tangent_E~q(\bvec{x}_V)
    \\
    &\quad
    -\sum_{E\in\ET}\omega_{TE}\left(
    \int_E(\partial_{\tangent_E}\tilde{\bvec{v}}_{|E}\cdot\normal_E)~\partial_{\normal_E}q
    -\int_E \partial_{\tangent_E}^2\tilde{\bvec{v}}_{|E}\cdot\tangent_E~q
    \right)=0,
  \end{aligned}
  \]
  the conclusion being a consequence of \eqref{eq:hermite-lifting:ibp}.
  This proves \eqref{eq:Im.CsymT.subset.Ker.DDT}.
  \smallskip

  To prove \eqref{eq:Im.CsymT=Ker.DDT}, it only remains to show that
  \begin{equation}\label{eq:dim.Im.CsymT=dim.Ker.DDT}
    \DIM(\Image\uCsym{k-1}) = \DIM(\Ker \DD{k-2}).
  \end{equation}
  By the rank-nullity theorem along with \eqref{eq:IVT.RT1=Ker.CsymT} and \eqref{eq:dim.VT},
  \[
  \DIM(\Image\uCsym{k-1})
  = \DIM\uvec{V}_T^k - \DIM(\RT{1}(T))
  = k(k-1) + 2k\card(\VT) - 3.
  \]
  On the other hand, again by the rank-nullity theorem along with \eqref{eq:Im.DDT=P1} and \eqref{eq:dim.SigmaT} written for $\ell=k-1$,
  \[
  \begin{aligned}
    \DIM(\Ker \DD{k-2})
    &= \DIM\utens{\Sigma}_T^{k-1} - \DIM(\Poly{k-2}(T))
    \\
    &= \tfrac32 k(k-1) + 2k\card(\VT) - 3 - \tfrac12k(k-1)
    \\
    &= k(k-1) + 2k\card(\VT) - 3.
  \end{aligned}
  \]
  This proves \eqref{eq:dim.Im.CsymT=dim.Ker.DDT} and concludes the proof of \eqref{eq:Im.CsymT=Ker.DDT}.
\end{proof}

We close this section by establishing a link between the tensor potential reconstruction in $\utens{\Sigma}_T^{k-1}$ and the full and discrete curls on $\uvec{V}_T^k$.
As pointed out in \cite{Di-Pietro.Droniou:21*1}, this type of results play a key role in the proof of optimal consistency properties for discrete operators, and thus rates of convergence for the related schemes.
This is the reason why, even though Lemma \ref{lem:PSigmaT.uCsymT=CsymT} is not used in the numerical analysis of the Kirchhoff--Love model considered in Section \ref{sec:application} (as this model does not involve the symmetric curl), we state and prove this link potential--curls link.

\begin{lemma}[Tensor potential reconstruction and discrete curls]\label{lem:PSigmaT.uCsymT=CsymT}
  It holds, for all $\uvec{v}_T\in\uvec{V}_T^k$,
  \[
  \PSigmaT{k-1}\big(
  \uCsym{k-1}\uvec{v}_T
  \big) = \Csym{k-1}\uvec{v}_T.
  \]
\end{lemma}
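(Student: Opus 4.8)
The plan is to prove the identity by showing that both sides agree when tested against every element of the decomposition $\tPoly{k-1}(T;\Symm) = \Holy{k-4}(T)\oplus\cHoly{k-1}(T)$ — wait, that is not quite right since $\PSigmaT{k-1}$ lands in $\tPoly{k-1}(T;\Symm)$ which decomposes as $\Holy{k-4}(T)\oplus\cHoly{k-1}(T)$ via \eqref{eq:PolySym=Holy.oplus.cHoly} (with $m=k-1$, $m+2 = k+1$, so $\Holy{k-1}$ is $\HESS\Poly{k+1}$; but the space appearing in $\utens{\Sigma}_T^{k-1}$ uses $\Holy{\ell-3} = \Holy{k-4}$). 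The correct test space is therefore $\HESS\Poly{k+1}(T) + \cHoly{k-1}(T)$, i.e.\ one tests \eqref{eq:P.Sigma.T} against $\HESS q + \btens{\upsilon}$ with $(q,\btens{\upsilon})\in\Poly{k+1}(T)\times\cHoly{k-1}(T)$. So the plan is: write $\utens{\tau}_T = \uCsym{k-1}\uvec{v}_T$ in the defining relation \eqref{eq:P.Sigma.T} for $\PSigmaT{k-1}$, and show the right-hand side equals $\int_T\Csym{k-1}\uvec{v}_T : (\HESS q + \btens{\upsilon})$ for all such $q,\btens{\upsilon}$.

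First I would unfold the components of $\uCsym{k-1}\uvec{v}_T$ using \eqref{eq:uCsymT}: its $\DD{k-2}$-value, its vertex values $\btens{\tau}_V = \mathbb{C}\btens{G}_{\bvec{v},V}$, its edge data $\tau_E = \pi_{\mathcal{P},E}^{k-3}(\partial_{\tangent_E}\bvec{v}_{\ET}\cdot\normal_E)$ and $D_{\btens{\tau},E} = \partial_{\tangent_E}^2\bvec{v}_{\ET}\cdot\tangent_E$, and its $\cHoly{k-1}$-component $\btens{\tau}_{\cvec H,T}^\compl = \btens{\pi}_{\ctens H,T}^{\compl,k-1}(\Csym{k-1}\uvec{v}_T)$. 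Next I would introduce the Hermite lifting $\tilde{\bvec v}$ of $\uvec{v}_T$ from Remark~\ref{rem:hermite-lifting} and use \eqref{eq:hermite-lifting:curl} and \eqref{eq:prop.lifting} to replace $\btens{\pi}_{\ctens P,T}^{k-2}(\Csym{k-1}\uvec{v}_T)$ by $\btens{\pi}_{\ctens P,T}^{k-2}(\SYM\CURL\tilde{\bvec v})$ wherever it is paired against $\HESS q$ (legitimate since $\HESS q\in\Holy{k-1}(T)\subset\tPoly{k-1}(T;\Symm)$, but one must check $\HESS q$ has degree $\le k-2$ when $q\in\Poly{k+1}$ — it does not in general, so here I would instead argue directly with the $L^2$-product against $\Csym{k-1}\uvec{v}_T$ using that $\DD{k-2}(\uCsym{k-1}\uvec{v}_T) = \pi_{\mathcal P,T}^{k-2}(\text{something})$ isn't available; rather, I expand $\DD{k-2}(\uCsym{k-1}\uvec v_T)$ via \eqref{eq:DDT} and recognise the resulting expression). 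Concretely: substituting $\utens{\tau}_T = \uCsym{k-1}\uvec v_T$ into \eqref{eq:DDT} and using \eqref{eq:hermite-lifting:curl}, \eqref{eq:prop.lifting} exactly as in the proof of \eqref{eq:Im.CsymT.subset.Ker.DDT} — but now testing against arbitrary $q\in\Poly{k+1}(T)$, not just $q\in\Poly{k-2}(T)$ — converts $\int_T\DD{k-2}(\uCsym{k-1}\uvec v_T)\,q$ plus the vertex and edge boundary terms of \eqref{eq:P.Sigma.T} into $\int_T\SYM\CURL\tilde{\bvec v}:\HESS q$ via the integration-by-parts \eqref{eq:hermite-lifting:ibp}, and then back to $\int_T\Csym{k-1}\uvec v_T:\HESS q$ using \eqref{eq:hermite-lifting:curl} once more (noting $\HESS q\in\tPoly{k-1}(T;\Symm)$ and $\SYM\CURL\tilde{\bvec v},\Csym{k-1}\uvec v_T$ both have $\btens{\pi}_{\ctens P,T}^{k-2}$-part equal, which suffices against $\HESS q$ only if $\deg\HESS q\le k-2$; for the top-degree part $q\in\Poly{k+1}\setminus\Poly{k-1}$ one uses instead that $\HESS q\in\Holy{k-1}(T)$ pairs with $\Csym{k-1}\uvec v_T$ through its $\btens{\pi}_{\ctens H,T}$-part, which is $\btens{\tau}_{\cvec H,T}$ by \eqref{eq:uCsymT}, and then re-run \eqref{eq:DDT}/\eqref{eq:hermite-lifting:ibp}). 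The key point is that the edge terms in \eqref{eq:P.Sigma.T} involving $\PSigmaE{k-1}\utens{\tau}_E\,\partial_{\normal_E}q$ and $D_{\btens{\tau},E}\,q$ match, after \eqref{eq:hermite-lifting:curl} and \eqref{eq:prop.lifting}, the edge terms $(\partial_{\tangent_E}\tilde{\bvec v}_{|E}\cdot\normal_E)\partial_{\normal_E}q$ and $(\partial_{\tangent_E}^2\tilde{\bvec v}_{|E}\cdot\tangent_E)q$ in \eqref{eq:hermite-lifting:ibp}; here one needs $\PSigmaE{k-1}\utens{\tau}_E = \partial_{\tangent_E}\tilde{\bvec v}_{|E}\cdot\normal_E$, which holds because both polynomials lie in $\Poly{k-1}(E)$, have the same vertex values ($\btens{\tau}_V\normal_E\cdot\normal_E = \mathbb{C}\btens{G}_{\bvec v,V}\normal_E\cdot\normal_E = \SYM\CURL\tilde{\bvec v}(\bvec x_V)\normal_E\cdot\normal_E = \partial_{\tangent_E}\tilde{\bvec v}_{|E}(\bvec x_V)\cdot\normal_E$ via \cite[Lemma 2.2]{Chen.Huang:20}) and the same $\pi_{\mathcal P,E}^{k-3}$-projection ($\tau_E = \pi_{\mathcal P,E}^{k-3}(\partial_{\tangent_E}\bvec v_{\ET}\cdot\normal_E) = \pi_{\mathcal P,E}^{k-3}(\partial_{\tangent_E}\tilde{\bvec v}_{|E}\cdot\normal_E)$ since $\tilde{\bvec v}_{|E} = \bvec v_{\ET}$), so they coincide by \eqref{eq:P.Sigma.E}.

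Finally I would handle the $\btens{\upsilon}\in\cHoly{k-1}(T)$ test direction: the right-hand side of \eqref{eq:P.Sigma.T} with $q=0$ reduces to $\int_T\btens{\tau}_{\cvec H,T}^\compl:\btens{\upsilon} = \int_T\btens{\pi}_{\ctens H,T}^{\compl,k-1}(\Csym{k-1}\uvec v_T):\btens{\upsilon} = \int_T\Csym{k-1}\uvec v_T:\btens{\upsilon}$, the last equality because $\btens{\upsilon}\in\cHoly{k-1}(T)$ and the $\btens{\pi}_{\ctens H,T}^{\compl,k-1}$-projection is $L^2$-orthogonal. Combining the two directions and invoking the Riesz characterisation of $\PSigmaT{k-1}$ over $\tPoly{k-1}(T;\Symm) = \Holy{k-1}(T)\oplus\cHoly{k-1}(T) = \HESS\Poly{k+1}(T)\oplus\cHoly{k-1}(T)$ gives the claimed identity.

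\textbf{Main obstacle.} The delicate point is the bookkeeping with the degrees: $\Csym{k-1}\uvec v_T$ lives in degree $k-1$ while its "$\Holy$-component" stored in $\utens{\Sigma}_T^{k-1}$ is only $\btens{\pi}_{\ctens H,T}^{k-4}$, and the test function $\HESS q$ with $q\in\Poly{k+1}(T)$ genuinely reaches degree $k-1$, so one cannot simply reduce everything to the $\btens{\pi}_{\ctens P,T}^{k-2}$-part via \eqref{eq:hermite-lifting:curl}. The resolution is that $\HESS q\in\Holy{k-1}(T)$, and pairing $\Csym{k-1}\uvec v_T$ against an element of $\Holy{k-1}(T)$ extracts precisely $\btens{\pi}_{\ctens H,T}^{k-1}(\Csym{k-1}\uvec v_T)$, whose projection onto $\Holy{k-4}(T)$ is what is stored; the missing piece in $\Holy{k-1}\ominus\Holy{k-4}$ is handled by re-expanding via \eqref{eq:DDT} (which only ever sees $\HESS q$ through $\btens{\tau}_{\cvec H,T}$) and \eqref{eq:hermite-lifting:ibp}. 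Making this matching airtight — i.e.\ verifying that \eqref{eq:DDT} together with the boundary terms of \eqref{eq:P.Sigma.T} reconstructs exactly $\int_T\Csym{k-1}\uvec v_T:\HESS q$ for all $q\in\Poly{k+1}(T)$, not merely modulo low-degree polynomials — is the crux of the argument.
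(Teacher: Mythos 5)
Your proposal correctly reduces the identity to testing \eqref{eq:P.Sigma.T} against $\HESS q + \btens{\upsilon}$ with $(q,\btens{\upsilon})\in\Poly{k+1}(T)\times\cHoly{k-1}(T)$, correctly establishes $\PSigmaE{k-1}\utens{\tau}_E = \partial_{\tangent_E}\tilde{\bvec v}_{|E}\cdot\normal_E$, correctly handles the $\btens{\upsilon}$-direction, and correctly uses the Hermite lifting with \eqref{eq:hermite-lifting:ibp} to convert the boundary terms of \eqref{eq:P.Sigma.T} into $\int_T\SYM\CURL\tilde{\bvec v}:\HESS q$ (noting implicitly that $\DD{k-2}\big(\uCsym{k-1}\uvec{v}_T\big)=0$ by \eqref{eq:Im.CsymT=Ker.DDT}). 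Up to that point you are tracking the paper's proof exactly.

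The gap is the very last step: you need
\[
\int_T\SYM\CURL\tilde{\bvec v}:\HESS q \;=\; \int_T\Csym{k-1}\uvec{v}_T:\HESS q\qquad\forall q\in\Poly{k+1}(T),
\]
and — as you correctly diagnose in the ``Main obstacle'' paragraph — \eqref{eq:hermite-lifting:curl} only gives equality of the $\btens{\pi}_{\ctens P,T}^{k-2}$-parts, which is insufficient since $\HESS q$ can have degree $k-1$. Your attempted fix (``$\HESS q\in\Holy{k-1}(T)$ pairs with $\Csym{k-1}\uvec v_T$ through its $\btens{\pi}_{\ctens H,T}$-part, which is $\btens{\tau}_{\cvec H,T}$'') does not work either: the stored component $\btens{\tau}_{\cvec H,T} = \btens{\pi}_{\ctens H,T}^{k-4}(\Csym{k-1}\uvec v_T)$ lives in $\Holy{k-4}(T) = \HESS\Poly{k-2}(T)$, which is a strict subspace of $\Holy{k-1}(T) = \HESS\Poly{k+1}(T)$, so it does not determine the pairing against an arbitrary $\HESS q$ with $q\in\Poly{k+1}$; ``re-running \eqref{eq:DDT}/\eqref{eq:hermite-lifting:ibp}'' just takes you back to $\int_T\SYM\CURL\tilde{\bvec v}:\HESS q$ and does not close the loop.

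The idea you are missing is short: apply the integration-by-parts formula \eqref{eq:ibp.VT} with $(\bvec v,\btens\tau) = (\tilde{\bvec v},\HESS q)$, which is valid since $\tilde{\bvec v}\in\bvec H^1(T;\Real^2)$. Because $\VROT(\HESS q)\equiv 0$, the volume term drops, giving $\int_T\SYM\CURL\tilde{\bvec v}:\HESS q = \sum_{E\in\ET}\omega_{TE}\int_E\HESS q\,\tangent_E\cdot\tilde{\bvec v} = \sum_{E\in\ET}\omega_{TE}\int_E\HESS q\,\tangent_E\cdot\bvec{v}_{\ET}$ (by $\tilde{\bvec v}_{|\partial T}=\bvec v_{\ET}$). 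On the other hand, the defining relation \eqref{eq:full.CsymT} with $\btens\tau = \HESS q\in\tPoly{k-1}(T;\Symm)$ and $\VROT\HESS q = 0$ gives $\int_T\Csym{k-1}\uvec{v}_T:\HESS q = \sum_{E\in\ET}\omega_{TE}\int_E\bvec{v}_{\ET}\cdot(\HESS q\,\tangent_E)$, which is the same boundary expression. No degree bookkeeping is needed; the point is that $\HESS q$ is in the kernel of $\VROT$, so both $\SYM\CURL\tilde{\bvec v}$ and $\Csym{k-1}\uvec{v}_T$ are ``seen'' by $\HESS q$ only through the common boundary trace $\bvec{v}_{\ET}$. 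Inserting this identity into your expression and invoking the decomposition \eqref{eq:PolySym=Holy.oplus.cHoly} with $m=k-1$ concludes.
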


\begin{proof}
  Let $\uvec{v}_T\in\uvec{V}_T^k$.
  We start by noticing that, for all $E\in\ET$, $\PSigmaE{k-1}\big(\uCsym[E]{k-1}\uvec{v}_E\big)$ (with $\uCsym[E]{k-1}$ and $\uvec{v}_E$ denoting the restrictions of $\uCsym{k-1}$ and $\uvec{v}_T$ to $E$) satisfies, by its definition \eqref{eq:P.Sigma.E} written for $\ell=k-1$,
  \begin{gather*}
  \text{
    $\PSigmaE{k-1}\big(\uCsym[E]{k-1}\uvec{v}_E\big)(\bvec{x}_V)
    = \mathbb{C}\btens{G}_{\bvec{v},V}\normal_E\cdot\normal_E
    = (\partial_{\tangent_E}\bvec{v}_{\ET}\cdot\normal_E)(\bvec{x}_V)$ for all $V\in\VE$,
  }
  \\
  \pi_{\mathcal{P},E}^{k-3}\left[
    \PSigmaE{k-1}\big(\uCsym[E]{k-1}\uvec{v}_E\big)
    \right]
  = \pi_{\mathcal{P},E}^{k-3}(\partial_{\tangent_E}\bvec{v}_{\ET}\cdot\normal_E),
  \end{gather*}
  where we have used \cite[Eq.~(3)]{Chen.Huang:20} and introduced the lifting $\tilde{\bvec{v}}$ of $\uvec{v}_T$ defined in Remark \ref{rem:hermite-lifting} to write, in the first line,
  \[
     \mathbb{C}\btens{G}_{\bvec{v},V}\normal_E\cdot \normal_E
     =(\SYM\CURL\tilde{\bvec{v}}(\bvec{x}_V))\normal_E \cdot\normal_E
     =(\partial_{\tangent_E}\bvec{\tilde{v}}(\bvec{x}_V))\cdot \normal_E
     =(\partial_{\tangent_E}\bvec{v}_{\ET}(\bvec{x}_V))\cdot \normal_E.   
  \]  
  Since $\partial_{\tangent_E}\bvec{v}_{\ET}\cdot\normal_E\in\Poly{k-1}(E)$, these conditions ensure that
  \[
  \PSigmaE{k-1}\big(\uCsym[E]{k-1}\uvec{v}_E\big) = \partial_{\tangent_E}\bvec{v}_{\ET}\cdot\normal_E.
  \]
  We next write the definition \eqref{eq:P.Sigma.T} of $\btens{P}_{\btens{\Sigma},T}^k$ with $\utens{\tau}_T = \uCsym{k-1}\uvec{v}_T$ and use the above relation along with \eqref{eq:Im.CsymT=Ker.DDT} to infer, for all $(q,\btens{\upsilon})\in\Poly{k+1}(T)\times\cHoly{k-1}(T)$,
  \[
  \begin{aligned}
    &\int_T\PSigmaT{k-1}\big(
    \uCsym{k-1}\uvec{v}_T
    \big):(\HESS q + \btens{\upsilon})
    \\
    &\quad= \sum_{E\in\ET}\omega_{TE}\sum_{V\in\VE}\omega_{EV}(\mathbb{C}\btens{G}_{\bvec{v},V}\normal_E\cdot\tangent_E\, q)(\bvec{x}_V)
    \\
    &\qquad + \sum_{E\in\ET}\omega_{TE}\left[
      \int_E (\partial_{\tangent_E}\bvec{v}_{\ET}\cdot\normal_E)\,\partial_{\normal_E} q
      - \int_E (\partial_{\tangent_E}^2\bvec{v}_{\ET}\cdot\tangent_E)\,q
      \right]
    + \int_T\cancel{\btens{\pi}_{\ctens{H},T}^{\compl,k-1}}(\Csym{k-1}\utens{\tau}_T):\btens{\upsilon},
  \end{aligned}
  \]
  where the cancellation of the projector is possible since $\btens{\upsilon}\in\cHoly{k-1}(T)$.
  Using the lifting $\tilde{\bvec{v}}$ of $\uvec{v}_T$ defined in Remark \ref{rem:hermite-lifting} and the integration by parts formula \eqref{eq:hermite-lifting:ibp}, we can go on writing
  \begin{equation}\label{eq:PSigmaT.uCsymT=CsymT:1}
    \int_T\PSigmaT{k-1}\big(
    \uCsym{k-1}\uvec{v}_T
    \big):(\HESS q + \btens{\upsilon})
    =
    \int_T \SYM\CURL\tilde{\bvec{v}}:\HESS q
    + \int_T\Csym{k-1}\utens{\tau}_T:\btens{\upsilon}.
  \end{equation}
  By the integration by parts formula \eqref{eq:ibp.VT} with $(\bvec{v},\btens{\tau}) = (\tilde{\bvec{v}},\HESS q)$ (which is justified since $\tilde{\bvec{v}}\in \bvec{H}^1(T;\Real^2)$), we have
  \begin{equation}\label{eq:PSigmaT.uCsymT=CsymT:2}
    \begin{aligned}
    \int_T \SYM\CURL\tilde{\bvec{v}}:\HESS q
    &= -\int_T\tilde{\bvec{v}}\cdot\cancel{\VROT(\HESS q)}
    + \sum_{E\in\ET}\omega_{TE}\int_{E}\HESS q\,\tangent_E\cdot\tilde{\bvec{v}}
    \\
    &= \sum_{E\in\ET}\omega_{TE}\int_{E}\HESS q\,\tangent_E\cdot\bvec{v}_{\ET}
    =\int_T\Csym{k-1}\uvec{v}_T:\HESS q,
    \end{aligned}
  \end{equation}
  where we have used the fact that $\tilde{\bvec{v}}_{|\partial T} = \bvec{v}_{\ET}$ to pass to the second line and the definition \eqref{eq:full.CsymT} of $\Csym{k-1}$ with $\btens{\tau}=\HESS q$ to conclude.
  Plugging \eqref{eq:PSigmaT.uCsymT=CsymT:2} into \eqref{eq:PSigmaT.uCsymT=CsymT:1}, the conclusion follows observing that $\HESS q + \btens{\upsilon}$ spans $\tPoly{k-1}(T;\Symm)$ as $(q,\btens{\upsilon})$ spans $\Poly{k+1}(T)\times\cHoly{k-1}(T)$ by \eqref{eq:PolySym=Holy.oplus.cHoly} written for $m = k-1$.
\end{proof}


\section{Application to Kirchhoff--Love plates}\label{sec:application}

We apply the above developments to the design of a polygonal method for problem \eqref{eq:strong.problem}.
We assume that $D>0$, $\nu\in (0,1)$, and we note that the tensor $\mathbb{A}$ is invertible, with
\[
\mathbb{A}^{-1}\btens{\tau}=\frac{1}{D(1-\nu)}\left(\btens{\tau} - \frac{\nu}{1+\nu} \tr(\btens{\tau})\btens{I}\right)\qquad\forall \btens{\tau}\in\Symm.
\]
In particular, the following coercivity and bound estimates hold, in which $|{\cdot}|$ denotes the Frobenius norm:
\begin{equation}\label{eq:A.coer.bound}
\mathbb{A}^{-1}\btens{\tau}:\btens{\tau}\ge \frac{|\btens{\tau}|^2}{D(1+\nu)}\quad\mbox{ and }\quad
|\mathbb{A}^{-1}\btens{\tau}|\le \frac{2}{D(1-\nu)}|\btens{\tau}|,\qquad\forall\btens{\tau}\in\Symm.
\end{equation}
Assuming $f\in L^2(\Omega)$, we thus consider the following weak formulation of problem \eqref{eq:strong.problem}: 
Find $(\btens{\sigma},u)\in\Hdivdiv{\Omega}{\Symm}\times L^2(\Omega)$ such that
\begin{equation}\label{eq:continuous.problem}
\begin{alignedat}{2}
  \int_\Omega\mathbb{A}^{-1}\btens{\sigma}:\btens{\tau} + \int_\Omega\DIV\VDIV\btens{\tau}\, u &= 0 &\qquad&\forall\btens{\tau}\in\Hdivdiv{\Omega}{\Symm},
  \\
  -\int_\Omega\DIV\VDIV\btens{\sigma}\, v &= \int_\Omega fv &\qquad& \forall v\in L^2(\Omega).
\end{alignedat}
\end{equation}

\subsection{Global discrete spaces}

Let $\ell\ge 2$ and denote by $\Mh = (\Th,\Eh)$ a polygonal mesh in the sense of \cite[Definition 1.4]{Di-Pietro.Droniou:20}, with $\Th$ collecting the polygonal mesh elements and $\Eh$ the mesh faces.
We additionally denote by $\Vh$ the set collecting the edge endpoints.
The global discrete counterpart of $\Hdivdiv{\Omega}$ on $\Mh$ is defined glueing the boundary components of the local spaces \eqref{eq:SigmaT} defined on each mesh element $T\in\Th$:
\begin{align*}
  \utens{\Sigma}_h^\ell&\coloneq\Big\{
  \begin{aligned}[t]
    &\utens{\tau}_h
    =\big(
    (\btens{\tau}_{\cvec{H},T},\btens{\tau}_{\cvec{H},T}^\compl)_{T\in\Th},
    (\tau_E,D_{\bvec{\tau},E})_{E\in\Eh}
    (\btens{\tau}_V)_{V\in\Vh}
    \big)\st
    \\
    &\qquad\text{
      $(\btens{\tau}_{\cvec{H},T},\btens{\tau}_{\cvec{H},T}^\compl)\in\Holy{\ell-3}(T)\times\cHoly{\ell}(T)$ for all $T\in\Th$,}\\
    &\qquad\text{
      $\tau_E\in\Poly{\ell-2}(E)$ and $D_{\bvec{\tau},E}\in\Poly{\ell-1}(E)$ for all $E\in\Eh$,
    }
    \\
    &\qquad\text{
      $\btens{\tau}_V\in\Symm$ for all $V\in\Vh$
    }
    \Big\}.
  \end{aligned}
\end{align*}
Correspondingly, the global interpolator on $\utens{\Sigma}_h^\ell$ is defined patching together the local interpolators \eqref{eq:ISigmaT}:
For all $\btens{\tau}\in\btens{H}^2(\Omega;\Symm)$,
\[
  \utens{I}^\ell_{\btens{\Sigma},h}\btens{\tau}\coloneq
  \Big(
  \begin{aligned}[t]
  &(\btens{\pi}_{\cvec{H},T}^{\ell-3}\btens{\tau},
    \btens{\pi}_{\cvec{H},T}^{\compl,\ell}\btens{\tau})_{T\in\Th},
    \\
  &\big(
  \pi_{\cvec{P},E}^{\ell-2}(\btens{\tau}_{|E}\normal_E\cdot\normal_E),
  \pi_{\cvec{P},E}^{\ell-1}\big[
  \partial_{\tangent_E}(\btens{\tau}_{|E}\normal_E\cdot\tangent_E)
  + (\VDIV\btens{\tau})_{|E}\cdot\normal_E
  \big]
  \big)_{E\in\Eh},
  \\
  &\big(\btens{\tau}(\bvec{x}_V)\big)_{V\in\Vh}
  \Big).
  \end{aligned}
\]

We additionally define, for any integer $m\ge 0$, the space of broken polynomials on $\Th$ of total degree $\le m$ as
\[
\Poly{m}(\Th)
\coloneq\left\{
q_h\in L^2(\Omega)\st\text{$q_{|T}\in\Poly{m}(T)$ for all $T\in\Th$}
\right\}.
\]
Vector and tensor versions of this space are denoted in boldface and we specify the codomain to distinguish them.
Interpolation on $\Poly{m}(\Th)$ is done through the $L^2$-orthogonal projector $\pi_{\mathcal{P},h}^m$.

\subsection{Global discrete $L^2$-product and norm of moment tensors}

We define the following discrete $L^2$-product on $\utens{\Sigma}_h^\ell$:
\[
(\utens{\upsilon}_h,\utens{\tau}_h)_{\btens{\Sigma},h}
\coloneq
\sum_{T\in\Th}\left(
\int_T\PSigmaT{\ell}\utens{\upsilon}_T:\PSigmaT{\ell}\utens{\tau}_T
+ s_{\btens{\Sigma},T}(\utens{\upsilon}_T,\utens{\tau}_T)
\right)
\qquad\forall (\utens{\upsilon}_h,\utens{\tau}_h)\in\utens{\Sigma}_h^\ell\times\utens{\Sigma}_h^\ell,
\]
with stabilization bilinear form
\[
\begin{aligned}
  s_{\btens{\Sigma},T}(\utens{\upsilon}_T,\utens{\tau}_T)
  &\coloneq\sum_{E\in\ET}h_T\int_E
  (\PSigmaT{\ell}\utens{\upsilon}_T\normal_E\cdot\normal_E - \PSigmaE{\ell}\utens{\upsilon}_E)
  (\PSigmaT{\ell}\utens{\tau}_T\normal_E\cdot\normal_E - \PSigmaE{\ell}\utens{\tau}_E)
  \\
  &\quad
  + \sum_{E\in\ET}h_T^3\int_E\Big[
    \left(
    \partial_{\tangent_E}(\PSigmaT{\ell}\utens{\upsilon}_T\normal_E\cdot\tangent_E) + \VDIV\PSigmaT{\ell}\utens{\upsilon}_T\cdot\normal_E - D_{\btens{\upsilon},E}
    \right)
    \\
    &\hphantom{+ \sum_{E\in\ET}h_T^3\int_E\Big[}\qquad
      \times\left(
      \partial_{\tangent_E}(\PSigmaT{\ell}\utens{\tau}_T\normal_E\cdot\tangent_E) + \VDIV\PSigmaT{\ell}\utens{\tau}_T\cdot\normal_E - D_{\btens{\tau},E}
      \right)\Big]
    \\
    &\quad
    + \sum_{V\in\VT} h_T^2\,
    (\PSigmaT{\ell}\utens{\upsilon}_T(\bvec{x}_V) - \btens{\upsilon}_V):
    (\PSigmaT{\ell}\utens{\tau}_T(\bvec{x}_V) - \btens{\tau}_V).
\end{aligned}
\]
The corresponding \emph{operator norm} is
\[
\norm[\btens{\Sigma},h]{\utens{\tau}_h}\coloneq (\utens{\tau}_h,\utens{\tau}_h)_{\btens{\Sigma},h}^{\frac12}.
\]
We denote by $\norm[\btens{\Sigma},T]{{\cdot}}$ the norm on $\utens{\Sigma}_T^\ell$ obtained restricting the above norm to the element $T\in\Th$.

\subsection{Discrete problem and main results}
The discrete problem reads:
Find $(\utens{\sigma}_h,u_h)\in\utens{\Sigma}_h^\ell\times \Poly{\ell-1}(\Th)$ such that
\begin{subequations}\label{eq:discrete}
  \begin{alignat}{4}\label{eq:discrete:flux}
    a_h(\utens{\sigma}_h,\utens{\tau}_h) + b_h(\utens{\tau}_h,u_h) &= 0 &\qquad&\forall\utens{\tau}_h\in\utens{\Sigma}_h^\ell,
    \\ \label{eq:discrete:balance}
    -b_h(\utens{\sigma}_h,v_h) &= \int_\Omega f v_h &\qquad&\forall v_h\in \Poly{\ell-1}(\Th),
  \end{alignat}
\end{subequations}
with bilinear forms $a_h:\utens{\Sigma}_h^\ell\times\utens{\Sigma}_h^\ell\to\Real$
and $b_h:\utens{\Sigma}_h^\ell\times \Poly{\ell-1}(\Th)\to\Real$ such that
\begin{equation}\label{eq:ah.bh}
  \begin{aligned}
  a_h(\utens{\sigma}_h,\utens{\tau}_h)\coloneq{}&
  \sum_{T\in\Th}\left(
  \int_T \mathbb{A}^{-1}\PSigmaT{\ell}\utens{\sigma}_T:\PSigmaT{\ell}\utens{\tau}_T
  +\frac{1}{D(1+\nu)} s_{\btens{\Sigma},T}(\utens{\sigma}_T,\utens{\tau}_T)
  \right),\\
  b_h(\utens{\tau}_h,v_h)\coloneq{}&\sum_{T\in\Th}\int_T \DD{\ell-1}\utens{\tau}_T\,v_T,
  \end{aligned}
\end{equation}
where $\utens{\tau}_T\in\utens{\Sigma}_T^\ell$ and $v_T\in\Poly{\ell-1}(T)$ respectively denote the restrictions of $\utens{\tau}_h$ and $v_h$ to $T$.
Adding together both equations in \eqref{eq:discrete} we obtain the following equivalent variational formulation:
Find $(\utens{\sigma}_h,u_h)\in\utens{\Sigma}_h^\ell\times \Poly{\ell-1}(\Th)$ such that
\begin{equation}\label{eq:discrete:variational}
  A_h((\utens{\sigma}_h,u_h),(\utens{\tau}_h,v_h))
  = \int_\Omega fv_h\qquad
  \forall(\utens{\tau}_h,v_h)\in\utens{\Sigma}_h^\ell\times \Poly{\ell-1}(\Th),
\end{equation}
with bilinear form $A_h:(\utens{\Sigma}_h^\ell\times \Poly{\ell-1}(\Th))^2\to\Real$ such that
\[
A_h((\utens{\sigma}_h,u_h),(\utens{\tau}_h,v_h))
\coloneq
a_h(\utens{\sigma}_h,\utens{\tau}_h)
+ b_h(\utens{\tau}_h,u_h)
- b_h(\utens{\sigma}_h,v_h).
\]
In what follows, we equip the Cartesian product space $\utens{\Sigma}_h^\ell\times \Poly{\ell-1}(\Th)$ with the norm
\[
\norm[\btens{\Sigma}\times L,h]{(\utens{\tau}_h,v_h)}
\coloneq\left(
\norm[\btens{\Sigma},h]{\utens{\tau}_h}^2 + \norm[L^2(\Omega)]{v_h}^2
\right)^{\frac12}.
\]

  We next state the main results of the analysis, the proof of which is postponed to the coming sections.
  From this point on, to avoid the proliferation of generic constants, we write $a\lesssim b$ in place of $a\le Cb$ with $C$ possibly depending only on the domain $\Omega$ and on the mesh regularity parameter of \cite[Definition 1.9]{Di-Pietro.Droniou:20}, but independent of the meshsize and the physical parameters $D$, $\nu$.
  The following lemma contains existence and uniqueness of a discrete solution as well as an a priori estimate on it.
\begin{lemma}[Well-posedness of problem \eqref{eq:discrete}]\label{lem:well-posedness:discrete}
  The bilinear form $A_h$ satisfies the inf-sup condition: For all $(\utens{\sigma}_h,u_h)\in \utens{\Sigma}_h^\ell\times \Poly{\ell-1}(\Th)$, 
  \begin{equation}\label{eq:Ah.infsup}
    \gamma\norm[\btens{\Sigma}\times L,h]{(\utens{\sigma}_h,u_h)}
    \lesssim
    \sup_{(\utens{\tau}_h,v_h)\in \utens{\Sigma}_h^\ell\times \Poly{\ell-1}(\Th)\backslash\{(\btens{0},0)\}}    
    \frac{A_h((\utens{\sigma}_h,u_h),(\utens{\tau}_h,v_h))}{\norm[\btens{\Sigma}\times L,h]{(\utens{\tau}_h,v_h)}},
  \end{equation}
  where
  \begin{equation}\label{eq:def.gamma}
    \gamma\coloneq \left[ D^2\left(1+\tfrac{1}{D^2(1-\nu)^2}\right)^2+1\right]^{-\frac12}.
  \end{equation}
  As a consequence, there exists a unique solution $(\utens{\sigma}_h,u_h)\in\utens{\Sigma}_h^\ell\times \Poly{\ell-1}(\Th)$ to problem \eqref{eq:discrete} (or, equivalently, \eqref{eq:discrete:variational}), for which the following a priori bound holds:
  \begin{equation*}
    \norm[\btens{\Sigma}\times L,h]{(\utens{\sigma}_h,u_h)}
    \lesssim\gamma^{-1}\norm[L^2(\Omega)]{f}.
  \end{equation*}
\end{lemma}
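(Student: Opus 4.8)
The plan is to establish the inf--sup condition \eqref{eq:Ah.infsup}, from which existence, uniqueness, and the a priori bound follow by the standard Banach--Nečas--Babuška argument (a linear square system satisfying inf--sup is invertible, and the estimate $\norm[\btens{\Sigma}\times L,h]{(\utens{\sigma}_h,u_h)}\lesssim\gamma^{-1}\norm[L^2(\Omega)]{f}$ is then immediate from \eqref{eq:discrete:variational} and Cauchy--Schwarz on the right-hand side). For the inf--sup estimate itself, I would follow the classical two-ingredient recipe for mixed saddle-point problems, but phrased directly in terms of $A_h$ rather than through separate coercivity-on-the-kernel plus $b_h$-inf--sup statements, since the final constant $\gamma$ in \eqref{eq:def.gamma} suggests the authors track the dependence on $D$ and $\nu$ explicitly.

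First I would record a coercivity property of $a_h$: by construction $a_h(\utens{\tau}_h,\utens{\tau}_h) = \sum_{T}\bigl(\int_T\mathbb{A}^{-1}\PSigmaT{\ell}\utens{\tau}_T:\PSigmaT{\ell}\utens{\tau}_T + \tfrac{1}{D(1+\nu)}s_{\btens{\Sigma},T}(\utens{\tau}_T,\utens{\tau}_T)\bigr)$, and using the coercivity bound $\mathbb{A}^{-1}\btens{\tau}:\btens{\tau}\ge\frac{|\btens{\tau}|^2}{D(1+\nu)}$ from \eqref{eq:A.coer.bound} together with the definition of $\norm[\btens{\Sigma},h]{{\cdot}}$ as $(\PSigmaT{\ell}{\cdot},\PSigmaT{\ell}{\cdot})_{L^2}+s_{\btens{\Sigma},T}$-type quantity, this gives $a_h(\utens{\tau}_h,\utens{\tau}_h)\ge\frac{1}{D(1+\nu)}\norm[\btens{\Sigma},h]{\utens{\tau}_h}^2$, i.e. $a_h$ is coercive on the whole space with constant $\frac{1}{D(1+\nu)}$. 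Second, and this is the crucial ingredient, I would prove an inf--sup condition for $b_h$: for every $v_h\in\Poly{\ell-1}(\Th)$ there exists $\utens{\tau}_h\in\utens{\Sigma}_h^\ell$ with $-b_h(\utens{\tau}_h,v_h)\gtrsim\norm[L^2(\Omega)]{v_h}\,\norm[\btens{\Sigma},h]{\utens{\tau}_h}$ and $\norm[\btens{\Sigma},h]{\utens{\tau}_h}\lesssim\norm[L^2(\Omega)]{v_h}$. The natural construction is to solve a continuous auxiliary problem: by surjectivity of $\DIV\VDIV:\bvec{H}^2(\Omega;\Symm)\to L^2(\Omega)$ (as used in the proof of Theorem \ref{thm:exactness}, via \cite{Chen.Huang:18}), pick $\btens{\tau}_{v}\in\bvec{H}^2(\Omega;\Symm)$ with $\DIV\VDIV\btens{\tau}_v = v_h$ and $\norm[\bvec{H}^2(\Omega)]{\btens{\tau}_v}\lesssim\norm[L^2(\Omega)]{v_h}$, then set $\utens{\tau}_h\coloneq\utens{I}^\ell_{\btens{\Sigma},h}\btens{\tau}_v$. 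The commutation property \eqref{eq:DDT.commutation} gives $\DD{\ell-1}\utens{\tau}_T = \pi_{\mathcal{P},T}^{\ell-1}(\DIV\VDIV\btens{\tau}_v) = \pi_{\mathcal{P},T}^{\ell-1}v_h = v_h$ on each $T$ (since $v_h\in\Poly{\ell-1}(\Th)$), whence $-b_h(\utens{\tau}_h,v_h) = -\sum_T\int_T v_h\, v_T = -\norm[L^2(\Omega)]{v_h}^2$; the boundedness $\norm[\btens{\Sigma},h]{\utens{I}^\ell_{\btens{\Sigma},h}\btens{\tau}_v}\lesssim\norm[\bvec{H}^2(\Omega)]{\btens{\tau}_v}\lesssim\norm[L^2(\Omega)]{v_h}$ follows from a boundedness/approximation estimate on the interpolator $\utens{I}^\ell_{\btens{\Sigma},h}$ combined with discrete trace and inverse inequalities controlling the stabilisation terms $s_{\btens{\Sigma},T}$ (here the mesh-regularity assumption enters, and the $h_T$, $h_T^3$, $h_T^2$ weights in $s_{\btens{\Sigma},T}$ are exactly chosen so that the traces on edges and values at vertices are scaled correctly). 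Note the sign is with the $-b_h$, as required.

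Having these two ingredients, I would assemble the inf--sup bound for $A_h$ by the usual trick: given $(\utens{\sigma}_h,u_h)$, test $A_h$ first against $(\utens{\sigma}_h,-u_h)$ to get $a_h(\utens{\sigma}_h,\utens{\sigma}_h)\gtrsim\frac{1}{D(1+\nu)}\norm[\btens{\Sigma},h]{\utens{\sigma}_h}^2$, then against $(\utens{\tau}_h,0)$ with $\utens{\tau}_h$ the $b_h$-inf--sup lifting of $u_h$ above, which produces $-b_h(\utens{\sigma}_h,? )$ — more precisely $A_h((\utens{\sigma}_h,u_h),(\utens{\tau}_h,0)) = a_h(\utens{\sigma}_h,\utens{\tau}_h) + b_h(\utens{\tau}_h,u_h)$ and $b_h(\utens{\tau}_h,u_h) = -\norm[L^2(\Omega)]{u_h}^2$ by the construction, while $|a_h(\utens{\sigma}_h,\utens{\tau}_h)|$ is absorbed using the $a_h$-boundedness $|a_h(\utens{\sigma}_h,\utens{\tau}_h)|\le\frac{2}{D(1-\nu)}\norm[\btens{\Sigma},h]{\utens{\sigma}_h}\norm[\btens{\Sigma},h]{\utens{\tau}_h}$ (from the bound in \eqref{eq:A.coer.bound}) and Young's inequality. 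Taking a suitable linear combination $(\utens{\tau}_h,v_h)=(\utens{\sigma}_h + \delta\utens{\tau}_h, -u_h)$ with $\delta$ chosen to make the $u_h$-contribution positive, and bookkeeping the constants, yields the stated $\gamma$: the term $D^2(1+\tfrac{1}{D^2(1-\nu)^2})^2$ under the square root in \eqref{eq:def.gamma} is what emerges from combining the coercivity constant $\tfrac{1}{D(1+\nu)}$, the continuity constant $\tfrac{2}{D(1-\nu)}$, and the $b_h$-inf--sup constant through Young's inequality, and the ``$+1$'' is the normalisation from the $L^2$ block. The main obstacle is the $b_h$-inf--sup step, specifically proving the $\norm[\bvec{H}^2(\Omega)]{\cdot}$-boundedness of $\utens{I}^\ell_{\btens{\Sigma},h}$ in the $\norm[\btens{\Sigma},h]{\cdot}$-norm uniformly in $h$: this requires careful local scaling arguments (continuous and discrete trace inequalities, polynomial inverse inequalities, and approximation properties of the $L^2$-projectors $\btens{\pi}_{\cvec{H},T}^{\ell-3}$, $\btens{\pi}_{\cvec{H},T}^{\compl,\ell}$, $\pi_{\mathcal{P},E}^{\ell-2}$, $\pi_{\mathcal{P},E}^{\ell-1}$ appearing in \eqref{eq:ISigmaT}) to dominate each of the three families of terms in $s_{\btens{\Sigma},T}$ by $\norm[\bvec{H}^2(T)]{\btens{\tau}_v}$; everything else is the textbook saddle-point bookkeeping.
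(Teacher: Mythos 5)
Your proposal follows essentially the same route as the paper: coercivity and continuity of $a_h$ with constants $\tfrac{1}{D(1+\nu)}$ and $\tfrac{2}{D(1-\nu)}$ from \eqref{eq:A.coer.bound}, a Fortin-type inf--sup bound for $b_h$ obtained via the surjectivity of $\DIV\VDIV$ from \cite{Chen.Huang:18}, the commutation property \eqref{eq:DDT.commutation}, and the $\norm[\btens{\Sigma},h]{\cdot}$-boundedness of $\ISigma[h]{\ell}$ (which the paper isolates as Proposition~\ref{eq:boundedness:ISigma} and proves by local discrete/continuous trace and inverse inequalities, exactly as you indicate). The only organisational difference is that the paper does not redo the saddle-point bookkeeping by hand; it cites the abstract inf--sup result \cite[Lemma A.11 and Proposition A.4]{Di-Pietro.Droniou:20}. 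In your manual assembly note a small bookkeeping slip: to obtain $A_h((\utens{\sigma}_h,u_h),\cdot)=a_h(\utens{\sigma}_h,\utens{\sigma}_h)$ you should test against the diagonal $(\utens{\sigma}_h,u_h)$, not $(\utens{\sigma}_h,-u_h)$, since the $-b_h(\utens{\sigma}_h,v_h)$ term cancels $b_h(\utens{\tau}_h,u_h)$ precisely when $(\utens{\tau}_h,v_h)=(\utens{\sigma}_h,u_h)$; the signs in the Fortin lift should then be adjusted consistently. These are arithmetic details that do not change the substance of the argument.
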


\begin{proof}
  See Section \ref{sec:application:stability.analysis}.
\end{proof}

  The convergence rates of the DDR scheme for smooth enough solutions are estimated in the following theorem.

\begin{theorem}[Error estimate]\label{thm:error.estimate}
  Assume that the solution $(\btens{\sigma},u)$ to the continuous problem \eqref{eq:continuous.problem} is such that $\btens{\sigma}\in \btens{H}^2(\Omega;\Symm)\cap\btens{H}^{\ell+1}(\Th;\Symm)$ and $u\in C^1(\overline{\Omega})\cap H^{\ell+3}(\Th)$, and let $(\utens{\sigma}_h,u_h)\in\utens{\Sigma}_h^\ell\times \Poly{\ell-1}(\Th)$ be the solution to the discrete problem \eqref{eq:discrete} (or, equivalently, \eqref{eq:discrete:variational}). Then, recalling the definition \eqref{eq:def.gamma} of $\gamma$, it holds
  \begin{equation}\label{eq:error.estimate}
    \norm[\btens{\Sigma}\times L,h]{(\ISigma[h]{\ell}\btens{\sigma}-\utens{\sigma}_h,\pi^{\ell-1}_{\mathcal{P},h}u-u_h)}
    \lesssim\gamma^{-1} h^{\ell+1}\left(\tfrac{1}{D(1-\nu)}\seminorm[\btens{H}^{\ell+1}(\Th)]{\btens{\sigma}}+\seminorm[H^{\ell+3}(\Th)]{u}\right).
  \end{equation}
\end{theorem}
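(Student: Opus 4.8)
The plan is to deduce \eqref{eq:error.estimate} from the inf-sup stability of $A_h$ (Lemma~\ref{lem:well-posedness:discrete}) by a third Strang lemma argument, the heart of the matter being a consistency estimate for the discrete bilinear forms. Writing \eqref{eq:Ah.infsup} for the pair $(\ISigma[h]{\ell}\btens{\sigma}-\utens{\sigma}_h,\,\pi_{\mathcal{P},h}^{\ell-1}u-u_h)$ and subtracting the discrete equation \eqref{eq:discrete:variational} tested against $(\utens{\tau}_h,v_h)$ from $A_h((\ISigma[h]{\ell}\btens{\sigma},\pi_{\mathcal{P},h}^{\ell-1}u),(\utens{\tau}_h,v_h))$, I would bound the left-hand side of \eqref{eq:error.estimate}, up to the factor $\gamma^{-1}$, by $\sup_{(\utens{\tau}_h,v_h)\neq(\btens{0},0)}\mathcal{E}_h(\utens{\tau}_h,v_h)/\norm[\btens{\Sigma}\times L,h]{(\utens{\tau}_h,v_h)}$, where the \emph{consistency error} is $\mathcal{E}_h(\utens{\tau}_h,v_h)\coloneq a_h(\ISigma[h]{\ell}\btens{\sigma},\utens{\tau}_h)+b_h(\utens{\tau}_h,\pi_{\mathcal{P},h}^{\ell-1}u)-b_h(\ISigma[h]{\ell}\btens{\sigma},v_h)-\int_\Omega f\,v_h$. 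It then remains to show $|\mathcal{E}_h(\utens{\tau}_h,v_h)|\lesssim h^{\ell+1}\big(\tfrac{1}{D(1-\nu)}\seminorm[\btens{H}^{\ell+1}(\Th)]{\btens{\sigma}}+\seminorm[H^{\ell+3}(\Th)]{u}\big)\norm[\btens{\Sigma}\times L,h]{(\utens{\tau}_h,v_h)}$.

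The first step is to eliminate the terms tested against $v_h$. Since each $v_T\in\Poly{\ell-1}(T)$, the commutation property \eqref{eq:DDT.commutation} gives $b_h(\ISigma[h]{\ell}\btens{\sigma},v_h)=\sum_{T\in\Th}\int_T\pi_{\mathcal{P},T}^{\ell-1}(\DIV\VDIV\btens{\sigma})\,v_T=\sum_{T\in\Th}\int_T\DIV\VDIV\btens{\sigma}\,v_T$, which by \eqref{eq:strong.problem:equilibrium} equals $-\int_\Omega f\,v_h$; hence $-b_h(\ISigma[h]{\ell}\btens{\sigma},v_h)-\int_\Omega f\,v_h=0$ and $\mathcal{E}_h(\utens{\tau}_h,v_h)=a_h(\ISigma[h]{\ell}\btens{\sigma},\utens{\tau}_h)+b_h(\utens{\tau}_h,\pi_{\mathcal{P},h}^{\ell-1}u)$, which depends on $\utens{\tau}_h$ only.

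Next, I would exploit the constitutive law in the form $\mathbb{A}^{-1}\btens{\sigma}=-\HESS u$ (from \eqref{eq:strong.problem:constitutive.law}). In $a_h$, replacing $\PSigmaT{\ell}(\ISigma{\ell}\btens{\sigma})$ by $\btens{\sigma}$ costs, using the polynomial consistency \eqref{eq:PT.poly.consistency}, the bound on $\mathbb{A}^{-1}$ in \eqref{eq:A.coer.bound}, the interpolation estimate $\norm[L^2(T)]{\btens{\sigma}-\PSigmaT{\ell}(\ISigma{\ell}\btens{\sigma})}+s_{\btens{\Sigma},T}(\ISigma{\ell}\btens{\sigma},\ISigma{\ell}\btens{\sigma})^{\frac12}\lesssim h_T^{\ell+1}\seminorm[\btens{H}^{\ell+1}(T)]{\btens{\sigma}}$ (from polynomial consistency plus standard polytopal interpolation bounds), and $s_{\btens{\Sigma},T}(\utens{\tau}_T,\utens{\tau}_T)^{\frac12}\le\norm[\btens{\Sigma},T]{\utens{\tau}_T}$, a term $\lesssim\tfrac{1}{D(1-\nu)}h^{\ell+1}\seminorm[\btens{H}^{\ell+1}(\Th)]{\btens{\sigma}}\norm[\btens{\Sigma},h]{\utens{\tau}_h}$ (invoking also $\tfrac{1}{1+\nu}\le\tfrac{1}{1-\nu}$ for the stabilization weight). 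I would then be left with $\sum_{T\in\Th}\big(-\int_T\HESS u:\PSigmaT{\ell}\utens{\tau}_T+\int_T\DD{\ell-1}\utens{\tau}_T\,\pi_{\mathcal{P},T}^{\ell-1}u\big)$; applying the definition \eqref{eq:P.Sigma.T} of $\PSigmaT{\ell}$ with $(q,\btens{\upsilon})=(\pi_{\mathcal{P},T}^{\ell+2}u,\btens{0})$ (and noting $\int_T\DD{\ell-1}\utens{\tau}_T\,\pi_{\mathcal{P},T}^{\ell-1}u=\int_T\DD{\ell-1}\utens{\tau}_T\,\pi_{\mathcal{P},T}^{\ell+2}u$), this sum becomes $-\sum_{T\in\Th}\int_T\PSigmaT{\ell}\utens{\tau}_T:\HESS(u-\pi_{\mathcal{P},T}^{\ell+2}u)$ minus the element boundary terms of \eqref{eq:P.Sigma.T} evaluated at $\pi_{\mathcal{P},T}^{\ell+2}u$. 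The volumetric term is $\lesssim h^{\ell+1}\seminorm[H^{\ell+3}(\Th)]{u}\norm[\btens{\Sigma},h]{\utens{\tau}_h}$ since $\norm[L^2(T)]{\PSigmaT{\ell}\utens{\tau}_T}\le\norm[\btens{\Sigma},T]{\utens{\tau}_T}$ and $\pi_{\mathcal{P},T}^{\ell+2}$ enjoys optimal $H^2$-approximation. For the boundary terms I would substitute $u$ for $\pi_{\mathcal{P},T}^{\ell+2}u$: the resulting sum over $\Th$ vanishes identically, because the edge unknowns $\PSigmaE{\ell}\utens{\tau}_E$, $D_{\btens{\tau},E}$ and the vertex unknowns $\btens{\tau}_V$ are single-valued while $\omega_{TE}$ changes sign across an interior edge, and because $u=\partial_{\normal}u=0$ on $\partial\Omega$ kills the contributions of boundary edges and vertices. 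What remains involves only $u-\pi_{\mathcal{P},T}^{\ell+2}u$ (and its normal derivative and point values), which I would bound by $h_T^{\ell+1}\seminorm[H^{\ell+3}(T)]{u}$ via trace, inverse and one-dimensional $L^\infty$ estimates, the unknowns $\btens{\tau}_V$, $\PSigmaE{\ell}\utens{\tau}_E$, $D_{\btens{\tau},E}$ being controlled up to $s_{\btens{\Sigma},T}(\utens{\tau}_T,\utens{\tau}_T)^{\frac12}$ by the traces of the polynomial $\PSigmaT{\ell}\utens{\tau}_T$, hence by $\norm[\btens{\Sigma},T]{\utens{\tau}_T}$. Summing over $T\in\Th$ with Cauchy--Schwarz and the mesh regularity then closes the bound on $\mathcal{E}_h$.

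I expect the main obstacle to lie in this last step: one must organise the element boundary terms so that the exact part telescopes to zero over the mesh --- crucially exploiting the single-valuedness of the edge and vertex components of $\utens{\Sigma}_h^\ell$ together with the homogeneous boundary conditions $u=\partial_{\normal}u=0$ --- and then estimate every remaining boundary defect by an $O(h^{\ell+1})$ approximation quantity while controlling the discrete unknowns \emph{solely} through the $\norm[\btens{\Sigma},h]{{\cdot}}$-norm, i.e.\ through the stabilization $s_{\btens{\Sigma},T}$ and the potential $\PSigmaT{\ell}\utens{\tau}_T$. A secondary but non-trivial point is to keep the dependence on $D$ and $\nu$ exactly as in \eqref{eq:error.estimate}: all $u$-related defects must remain parameter-free, whereas the $\btens{\sigma}$-related ones carry the factor $\tfrac{1}{D(1-\nu)}$ inherited from the bound on $\mathbb{A}^{-1}$ and from $\tfrac{1}{D(1+\nu)}\le\tfrac{1}{D(1-\nu)}$ in the stabilization weighting of $a_h$.
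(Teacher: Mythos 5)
Your proposal is correct and follows essentially the same route as the paper: inf-sup plus a Third Strang argument reduce the theorem to a consistency bound, the commutation property \eqref{eq:DDT.commutation} cancels the $v_h$-dependent terms, the constitutive law $\mathbb{A}^{-1}\btens{\sigma}=-\HESS u$ separates the $\btens{\sigma}$- and $u$-parts, and the definition \eqref{eq:P.Sigma.T} of $\PSigmaT{\ell}$ evaluated at $\pi_{\mathcal{P},T}^{\ell+2}u$ (with $\btens{\upsilon}=\btens{0}$) generates exactly the paper's decomposition $\term_1+\term_{21}+\term_{22}+\term_{23}$, whose boundary part telescopes by single-valuedness of edge/vertex unknowns, $\omega_{TE}$ sign reversal, and the homogeneous clamping conditions. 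The only cosmetic difference is that you eliminate the $\DD{\ell-1}\utens{\tau}_T(u-\hat u_T)$ remainder at the outset (since $\int_T\DD{\ell-1}\utens{\tau}_T\,u=\int_T\DD{\ell-1}\utens{\tau}_T\,\hat u_T$), whereas the paper carries it inside $\term_{21}$ and bounds it via \eqref{eq:bound.DDT}; both versions give the same final estimate.
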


\begin{proof}
  See Section \ref{sec:application:convergence.analysis}.
\end{proof}

  \begin{remark}[Superconvergence of the deflection]
    The error estimate \eqref{eq:error.estimate} shows a superconvergence of $u_h$ to $\pi^{\ell-1}_{\mathcal{P},h}u$.
    This phenomenon is common in mixed formulations; see, e.g., \cite[Theorem 7]{Di-Pietro.Ern:17} concerning a Mixed High-Order method for the Darcy problem related to the rightmost portion of the DDR sequence of \cite{Di-Pietro.Droniou.ea:20}.
  \end{remark}


\subsection{Numerical examples}\label{sec:application:numerical.examples}

The DDR scheme \eqref{eq:discrete} was implemented in HArDCore (see \url{https://github.com/jdroniou/HArDCore}), an open source C++ library geared towards polytopal methods. For the resolution of the linear system, the moment unknowns in each cell (components in $\Holy{\ell-3}(T)\times \cHoly{\ell}(T)$ in $\utens{\Sigma}_h^\ell$) are eliminated using a standard static condensation process, and the resulting system is solved using the \texttt{Intel MKL PARDISO} library (see \url{https://software.intel.com/en-us/mkl}).
In order to numerically assess the convergence rates established in Theorem \ref{thm:error.estimate}, we have considered a manufactured solution of problem \eqref{eq:continuous.problem} with tensor $\mathbb{A}$ such that $\mathbb{A}\btens{\tau} = \btens{\tau}$ for all $\btens{\tau}\in\Symm$ and deflection field
\[
u(x_1,x_2) = \sin(\pi x_1)\sin(\pi x_2).
\]
The corresponding moment tensor field and orthogonal load are, respectively,
\[
\btens{\sigma}(x_1, x_2) = \pi^2\begin{pmatrix}
\sin(\pi x_1)\sin(\pi x_2) & -\cos(\pi x_1)\cos(\pi x_2) \\
-\cos(\pi x_1)\cos(\pi x_2) & \sin(\pi x_1)\sin(\pi x_2)
\end{pmatrix},\quad
f(x_1, x_2) = 4\pi^4\sin(\pi x_1)\sin(\pi x_2).
\]
To account for the fact that the normal derivative of $u$ does not vanish on $\partial\Omega$ (which corresponds to a non-homogeneous natural condition for the mixed formulation considered here), the right-hand side of \eqref{eq:discrete:flux} is replaced by $-\sum_{E\in\Eh^{\rm b}}\int_E \partial_{\normal} u~\PSigmaE{\ell}\utens{\tau}_E$, where $\Eh^{\rm b}$ is the set of boundary edges, $\partial_{\normal}u$ is the given boundary datum with $\normal:\partial\Omega\to\Real^2$ the outer normal to $\Omega$.
It can be easily checked that this modification does not alter the consistency results of Lemma \ref{lem:well-posedness:discrete}, hence Theorem \ref{thm:error.estimate} holds unchanged also in this case.

To showcase the ability of the DDR method to support both standard and general polygonal meshes (possibly including non-matching interfaces), we consider matching triangular, locally refined quadrangular, predominantly hexagonal, and Kershaw mesh sequences, an example of which is depicted in Figure \ref{fig:meshes}.

\begin{figure}\centering
  \begin{minipage}{0.24\textwidth}\centering
    \includegraphics[height=3.5cm]{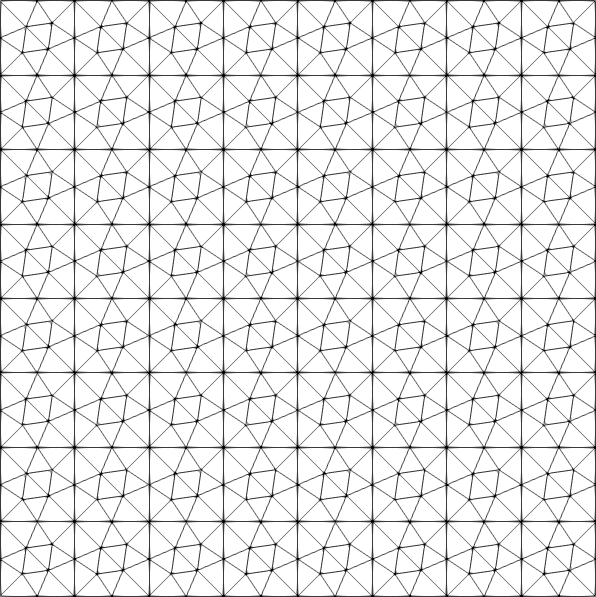}
    \subcaption{Matching triangular\label{fig:meshes:tri}}
  \end{minipage}
  \begin{minipage}{0.24\textwidth}\centering
    \includegraphics[height=3.5cm]{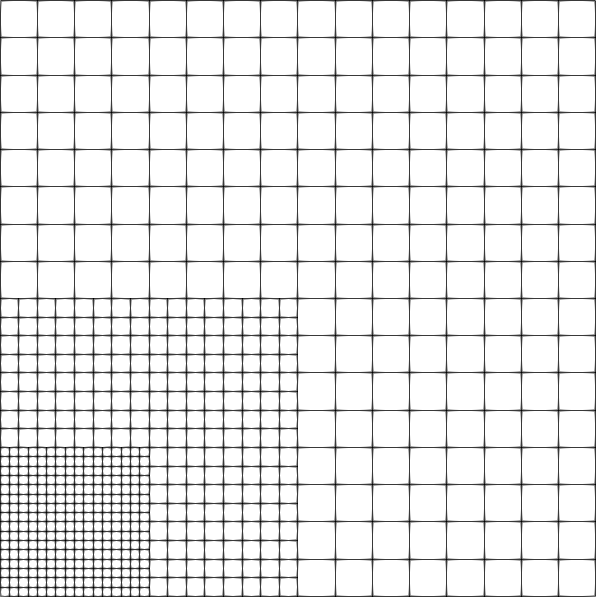}
    \subcaption{Locally refined\label{fig:meshes:locref}}
  \end{minipage}
  \begin{minipage}{0.24\textwidth}\centering
    \includegraphics[height=3.5cm]{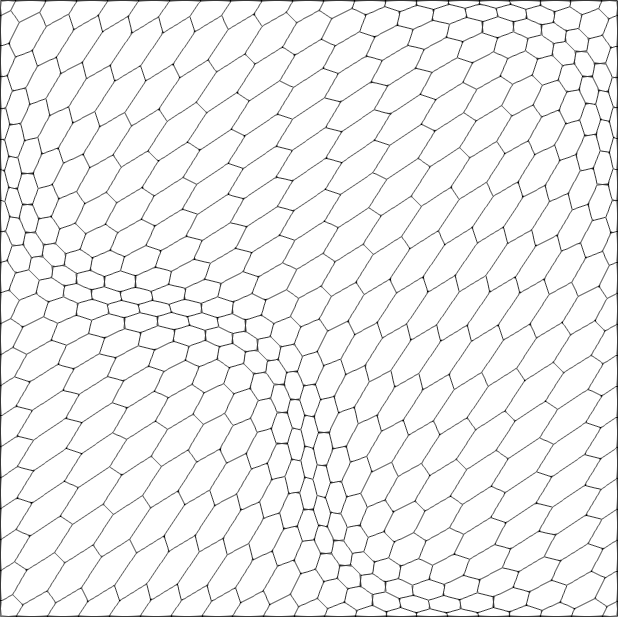}
    \subcaption{Hexagonal\label{fig:meshes:hexa}}
  \end{minipage}
  \begin{minipage}{0.24\textwidth}\centering
    \includegraphics[height=3.5cm]{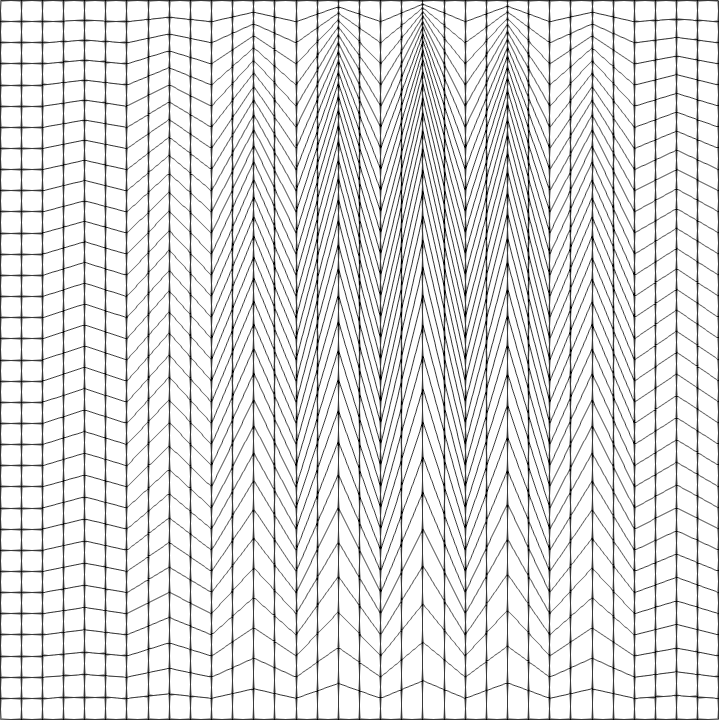}
    \subcaption{Kershaw\label{fig:meshes:kershaw}}
  \end{minipage}
  \caption{Meshes used in the numerical test of Section \ref{sec:application:numerical.examples}.
    The triangular, nonconforming, and Kershaw meshes are taken from the FVCA5 benchmark~\cite{Herbin.Hubert:08}.\label{fig:meshes}}
\end{figure}

The convergence of the energy error with respect to the meshsize $h$ for each of these mesh families and polynomial degrees $\ell\in\{2,3,4\}$ is displayed in Figure \ref{fig:trigonometric}.
The predicted orders of convergence are numerically verified for all the mesh sequences, as can be checked comparing each curve with the corresponding reference slope.

Saturation of the error can be noticed for $\ell=4$ on the finest triangular and Kershaw meshes.
This saturation is related to the accumulation of round-off errors and the different values at which it occurs is linked to the condition number of the system matrix.
  Notice that the Kershaw mesh family is particularly delicate, as the quality worsens at each refinement owing to the onset of more and more elongated elements.

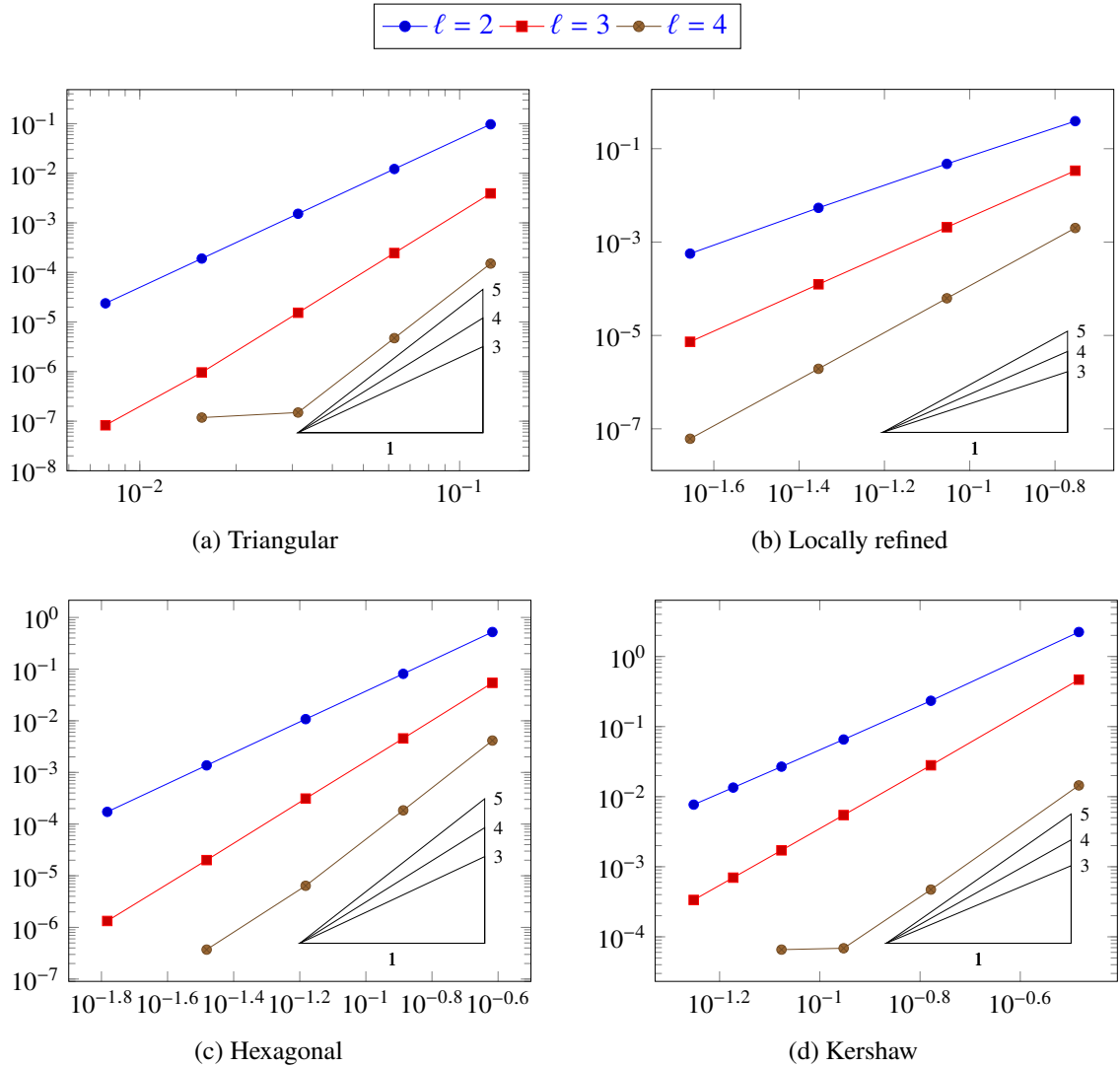
\begin{figure}\centering
  \ref{conv.trigo}
  \vspace{0.5cm}\\
  \begin{minipage}[b]{0.45\linewidth}\centering
    \begin{tikzpicture}[scale=0.90]
      \begin{loglogaxis}[ymin=10e-9]
        \addplot table[x=MeshSize,y=Error] {dat/tri_trigo_k3/data_rates.dat};
        \addplot table[x=MeshSize,y=Error] {dat/tri_trigo_k4/data_rates.dat};
        \addplot table[x=MeshSize,y=Error] {dat/tri_trigo_k5/data_rates.dat};
        \logLogSlopeTriangle{0.9}{0.4}{0.1}{3}{black};
        \logLogSlopeTriangle{0.9}{0.4}{0.1}{4}{black};        
        \logLogSlopeTriangle{0.9}{0.4}{0.1}{5}{black};        
      \end{loglogaxis}
    \end{tikzpicture}
    \subcaption{Triangular\label{fig:convergence:tri}}
  \end{minipage}
  \hspace{0.5cm}
  \begin{minipage}[b]{0.45\linewidth}\centering
    \begin{tikzpicture}[scale=0.90]
      \begin{loglogaxis}
        \addplot table[x=MeshSize,y=Error] {dat/locref_trigo_k3/data_rates.dat};
        \addplot table[x=MeshSize,y=Error] {dat/locref_trigo_k4/data_rates.dat};
        \addplot table[x=MeshSize,y=Error] {dat/locref_trigo_k5/data_rates.dat};
        \logLogSlopeTriangle{0.90}{0.4}{0.1}{3}{black};
        \logLogSlopeTriangle{0.90}{0.4}{0.1}{4}{black};
        \logLogSlopeTriangle{0.90}{0.4}{0.1}{5}{black};
      \end{loglogaxis}
    \end{tikzpicture}
    \subcaption{Locally refined\label{fig:convergence:locref}}
  \end{minipage}
  \vspace{0.5cm}\\
  \begin{minipage}[b]{0.45\linewidth}\centering
    \begin{tikzpicture}[scale=0.90]
      \begin{loglogaxis}
        \addplot table[x=MeshSize,y=Error] {dat/hexa_trigo_k3/data_rates.dat};
        \addplot table[x=MeshSize,y=Error] {dat/hexa_trigo_k4/data_rates.dat};
        \addplot table[x=MeshSize,y=Error] {dat/hexa_trigo_k5/data_rates.dat};
        \logLogSlopeTriangle{0.90}{0.4}{0.1}{3}{black};
        \logLogSlopeTriangle{0.90}{0.4}{0.1}{4}{black};
        \logLogSlopeTriangle{0.90}{0.4}{0.1}{5}{black};
      \end{loglogaxis}
    \end{tikzpicture}
    \subcaption{Hexagonal\label{fig:convergence:hexa}}
  \end{minipage}
  \hspace{0.5cm}
  \begin{minipage}[b]{0.45\linewidth}\centering
    \begin{tikzpicture}[scale=0.90]
      \begin{loglogaxis}[ legend columns=-1, legend to name=conv.trigo ]
        \addplot table[x=MeshSize,y=Error] {dat/kershaw_trigo_k3/data_rates.dat};
        \addplot table[x=MeshSize,y=Error] {dat/kershaw_trigo_k4/data_rates.dat};
        \addplot table[x=MeshSize,y=Error] {dat/kershaw_trigo_k5/data_rates.dat};
        \logLogSlopeTriangle{0.90}{0.4}{0.1}{3}{black};
        \logLogSlopeTriangle{0.90}{0.4}{0.1}{4}{black};
        \logLogSlopeTriangle{0.90}{0.4}{0.1}{5}{black};
        \legend{$\ell=2$,$\ell=3$,$\ell=4$};
      \end{loglogaxis}
    \end{tikzpicture}
    \subcaption{Kershaw\label{fig:convergence:kershaw}}
  \end{minipage}  
  \caption{Error $\norm[\btens{\Sigma}\times L,h]{(\ISigma[h]{\ell}\btens{\sigma}-\utens{\sigma}_h,\pi^{\ell-1}_{\mathcal{P},h}u-u_h)}$ vs.~meshsize $h$ for the test case of Section \ref{sec:application:numerical.examples} using the mesh families depicted in Figure \ref{fig:meshes}.
    The reference slopes refer to the orders of convergence predicted by Theorem \ref{thm:error.estimate} for each polynomial degree $\ell\in\{2,3,4\}$.\label{fig:trigonometric}}
\end{figure}


\subsection{Stability analysis}\label{sec:application:stability.analysis}

In this section we give a proof of Lemma \ref{lem:well-posedness:discrete} after discussing the required intermediate results.
Following analysis principles developed for the Discrete de Rham sequence \cite{Di-Pietro.Droniou:21*1}, we first introduce a component norm $\tnorm[\btens{\Sigma},h]{{\cdot}}$ on $\utens{\Sigma}_h^\ell$, establish boundedness with respect to this norm of discrete differential and potential operators, prove the equivalence of the component norm with the norm $\norm[\btens{\Sigma},h]{{\cdot}}$, and finally establish the boudedness of the interpolator on $\utens{\Sigma}_h^\ell$.

\subsubsection{Component norm and uniform equivalence with the operator norm}

The \emph{component norm} $\tnorm[\btens{\Sigma},h]{{\cdot}}:\utens{\Sigma}_h^\ell\to[0,\infty)$ is such that, for all $\utens{\tau}_h\in\utens{\Sigma}_h^\ell$,
\[
\tnorm[\btens{\Sigma},h]{\utens{\tau}_h}^2\coloneq \sum_{T\in\Th}\tnorm[\btens{\Sigma},T]{\utens{\tau}_T}^2
\]
with
\begin{align*}
  \tnorm[\btens{\Sigma},T]{\utens{\tau}_T}^2
  \coloneq{}&
  \norm[\btens{L}^2(T;\Real^{2\times 2})]{\btens{\tau}_{\cvec{H},T}}^2+\norm[\btens{L}^2(T;\Real^{2\times 2})]{\btens{\tau}_{\cvec{H},T}^\compl}^2
  \\
  &+\sum_{E\in\ET}\left(h_T\norm[L^2(E)]{\tau_E}^2+h_T^3\norm[L^2(E)]{D_{\btens{\tau},E}}^2\right)
  +\sum_{V\in\VT}h_T^2|\btens{\tau}_V|^2.
\end{align*}

\begin{proposition}[Boundedness of local operators]
  It holds, for all $T\in\Th$ and all $\utens{\tau}_T\in\utens{\Sigma}_T^\ell$,
  \begin{align}
    h_T^2\norm[L^2(T)]{\DD{\ell-1}\utens{\tau}_T}\lesssim{}& \tnorm[\btens{\Sigma},T]{\utens{\tau}_T},
    \label{eq:bound.DDT}\\
    \left(\sum_{E\in\ET}h_T\norm[L^2(E)]{\PSigmaE{\ell}\utens{\tau}_E}^2\right)^{\frac12}\lesssim{}& \tnorm[\btens{\Sigma},T]{\utens{\tau}_T},
    \label{eq:bound.PSigmaE}\\
    \norm[\btens{L}^2(T;\Real^{2\times 2})]{\PSigmaT{\ell}\utens{\tau}_T}\lesssim{}& \tnorm[\btens{\Sigma},T]{\utens{\tau}_T},
    \label{eq:bound.PSigmaT}
  \end{align}
  with $\utens{\tau}_E$ denoting the restriction of $\utens{\tau}_T$ to $E$.
\end{proposition}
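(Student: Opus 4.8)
The plan is to prove the three bounds in the order \eqref{eq:bound.DDT}, \eqref{eq:bound.PSigmaE}, \eqref{eq:bound.PSigmaT}, using the defining relations \eqref{eq:DDT}, \eqref{eq:P.Sigma.E}, \eqref{eq:P.Sigma.T} respectively, together with standard inverse and trace inequalities on polynomials on a polygon and its edges (available under the mesh regularity assumption). Throughout, the strategy for each operator is the same: test the defining identity against a generic element of the (finite-dimensional, hence norm-equivalent) target polynomial space, bound each term on the right-hand side by the component norm $\tnorm[\btens{\Sigma},T]{\utens{\tau}_T}$ times an appropriate power of $h_T$ controlling the test function, and then conclude by a Riesz-type duality argument (i.e.\ dividing by the norm of the test function).

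For \eqref{eq:bound.DDT}, I would take $q = \DD{\ell-1}\utens{\tau}_T \in \Poly{\ell-1}(T)$ in \eqref{eq:DDT}, so that the left-hand side equals $\norm[L^2(T)]{\DD{\ell-1}\utens{\tau}_T}^2$. On the right-hand side: the volume term is bounded by $\norm[\btens{L}^2(T)]{\btens{\tau}_{\cvec{H},T}}\,\norm[\btens{L}^2(T)]{\HESS q}\lesssim \norm[\btens{L}^2(T)]{\btens{\tau}_{\cvec{H},T}}\,h_T^{-2}\norm[L^2(T)]{q}$ by an inverse inequality; the vertex term is controlled using $|q(\bvec{x}_V)|\lesssim h_T^{-1}\norm[L^2(E)]{q}_{\ldots}\lesssim h_T^{-2}\norm[L^2(T)]{q}$ (discrete trace plus inverse estimates) against $|\btens{\tau}_V|\lesssim h_T^{-1}(h_T^2|\btens{\tau}_V|^2)^{1/2}$; the edge terms $\int_E\tau_E\,\partial_{\normal_E}q$ and $\int_E D_{\tau,E}\,q$ are bounded by $\norm[L^2(E)]{\tau_E}\,h_T^{-1}\norm[L^2(E)]{q}_{\ldots}$ and $\norm[L^2(E)]{D_{\tau,E}}\,\norm[L^2(E)]{q}_{\ldots}$, then the edge $L^2$-norms of $q$ are traded for $h_T^{-1/2}\norm[L^2(T)]{q}$. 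Collecting, every term carries the factor $h_T^{-2}\norm[L^2(T)]{q}$ times a piece of $\tnorm[\btens{\Sigma},T]{\utens{\tau}_T}$; dividing by $\norm[L^2(T)]{q}=\norm[L^2(T)]{\DD{\ell-1}\utens{\tau}_T}$ gives the claim.

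For \eqref{eq:bound.PSigmaE}, I would use \eqref{eq:P.Sigma.E} directly on each edge: writing $\PSigmaE{\ell}\utens{\tau}_E = \tau_E + (\text{a low-degree correction determined by the values } \btens{\tau}_V\normal_E\cdot\normal_E - \tau_E(\bvec{x}_V))$, and using that on the finite-dimensional space $\Poly{\ell}(E)$ all norms are equivalent up to scaling, one gets $h_T^{1/2}\norm[L^2(E)]{\PSigmaE{\ell}\utens{\tau}_E}\lesssim h_T^{1/2}\norm[L^2(E)]{\tau_E} + h_T|\btens{\tau}_V|$ for the endpoints $V$ of $E$, which is bounded by $\tnorm[\btens{\Sigma},T]{\utens{\tau}_T}$; summing the squares over $E\in\ET$ yields \eqref{eq:bound.PSigmaE}. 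Finally, for \eqref{eq:bound.PSigmaT} I would test \eqref{eq:P.Sigma.T} with $\HESS q + \btens{\upsilon}$ ranging over $\tPoly{\ell}(T;\Symm)$, choosing in particular the pair realising $\PSigmaT{\ell}\utens{\tau}_T$ itself (so the left-hand side becomes $\norm[\btens{L}^2(T)]{\PSigmaT{\ell}\utens{\tau}_T}^2$); then, using the decomposition \eqref{eq:PolySym=Holy.oplus.cHoly}, one may take $q$ and $\btens{\upsilon}$ with $\norm[L^2(T)]{q}\lesssim h_T^2\norm[\btens{L}^2(T)]{\HESS q}$-type control and $\norm[\btens{L}^2(T)]{\btens{\upsilon}}\lesssim\norm[\btens{L}^2(T)]{\HESS q + \btens{\upsilon}}$, bound the volume term $\int_T\DD{\ell-1}\utens{\tau}_T\,q$ by \eqref{eq:bound.DDT} (giving $h_T^{-2}\tnorm[\btens{\Sigma},T]{\utens{\tau}_T}\cdot\norm[L^2(T)]{q}\lesssim\tnorm[\btens{\Sigma},T]{\utens{\tau}_T}\norm[\btens{L}^2(T)]{\HESS q + \btens{\upsilon}}$), the vertex and edge terms as in the proof of \eqref{eq:bound.DDT} (now also invoking \eqref{eq:bound.PSigmaE} for the $\PSigmaE{\ell}\utens{\tau}_E$ term), and the term $\int_T\btens{\tau}_{\cvec{H},T}^\compl:\btens{\upsilon}$ trivially by Cauchy--Schwarz.

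The main obstacle, as usual in these DDR boundedness proofs, is the careful bookkeeping of the powers of $h_T$: one must verify that the weights $h_T, h_T^3, h_T^2$ in the definition of $\tnorm[\btens{\Sigma},T]{{\cdot}}$ and the weights $h_T^2$ in \eqref{eq:bound.DDT} are exactly the ones produced by combining the scaled trace inequality $\norm[L^2(E)]{r}\lesssim h_T^{-1/2}\norm[L^2(T)]{r}$ and the scaled inverse inequality $\norm[L^2(E)]{\partial_{\normal_E}r}\lesssim h_T^{-1}\norm[L^2(E)]{r}$ for polynomials $r$, together with the pointwise bound $|r(\bvec{x}_V)|\lesssim h_T^{-1/2}\norm[L^2(E)]{r}$. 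None of these inequalities is difficult individually, but getting the bound \eqref{eq:bound.PSigmaT} with a \emph{constant independent of $h_T$} requires that every term, after all substitutions, collapse to precisely $\tnorm[\btens{\Sigma},T]{\utens{\tau}_T}\,\norm[\btens{L}^2(T)]{\HESS q+\btens{\upsilon}}$ with no residual power of $h_T$ — which is what dictates the particular choice of weights in the component norm in the first place. A secondary, minor point is justifying the norm-equivalence on $\Poly{\ell}(E)$ used to control $\PSigmaE{\ell}\utens{\tau}_E$ from its projection $\tau_E$ and its endpoint values; this follows from the fact that $(r\mapsto(\pi_{\mathcal{P},E}^{\ell-2}r, r(\bvec{x}_{V_1}), r(\bvec{x}_{V_2})))$ is an isomorphism on $\Poly{\ell}(E)$ and a scaling argument.
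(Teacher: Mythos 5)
Your proposal is correct and follows essentially the same route as the paper: test the defining identities against the reconstruction itself (using $q=\DD{\ell-1}\utens{\tau}_T$ for \eqref{eq:bound.DDT}, the isomorphism $r\mapsto(\pi^{\ell-2}_{\mathcal{P},E}r,r(\bvec{x}_{V_1}),r(\bvec{x}_{V_2}))$ with scaling for \eqref{eq:bound.PSigmaE}, and the topological splitting $\PSigmaT{\ell}\utens{\tau}_T=\HESS q+\btens{\upsilon}$ with $h_T^{-2}\norm[L^2(T)]{q}+\norm[\btens{L}^2(T;\Real^{2\times 2})]{\btens{\upsilon}}\lesssim\norm[\btens{L}^2(T;\Real^{2\times 2})]{\PSigmaT{\ell}\utens{\tau}_T}$ for \eqref{eq:bound.PSigmaT}), then bound each right-hand side term by discrete trace/inverse inequalities and divide out. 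The only detail the paper spells out that you gloss over is the explicit construction of a scaled complement of $\Ker\HESS$ in $\Poly{\ell+2}(T)$ to obtain the $h$-uniform control $\norm[L^2(T)]{q}\lesssim h_T^2\norm[\btens{L}^2(T;\Real^{2\times 2})]{\HESS q}$, but your stated intent matches this.
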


\begin{proof}
  \underline{1. \emph{Proof of \eqref{eq:bound.DDT}.}}
  Making $q=\DD{\ell-1}\uvec{\tau}_T$ in the definition \eqref{eq:DDT} of $\DD{\ell}$ and using discrete inverse and trace inequalities on the edges and the vertices (see \cite[Lemmas 1.28 and 1.32]{Di-Pietro.Droniou:20}), we find
  \[
  \begin{aligned}
    \norm[L^2(T)]{\DD{\ell-1}\utens{\tau}_T}^2\lesssim{}& \norm[\btens{L}^2(T;\Real^{2\times 2})]{\btens{\tau}_{\cvec{H},T}}h_T^{-2}\norm[L^2(T)]{\DD{\ell-1}\utens{\tau}_T}+\sum_{E\in\ET}\sum_{V\in\VE} |\btens{\tau}_V|h_T^{-1}\norm[L^2(T)]{\DD{\ell-1}\utens{\tau}_T}\\
    &+\sum_{E\in\ET}\norm[L^2(E)]{\tau_E}h_T^{-\frac32}\norm[L^2(T)]{\DD{\ell-1}\utens{\tau}_T}
    +\sum_{E\in\ET}\norm[L^2(E)]{D_{\btens{\tau},E}}h_T^{-\frac12}\norm[L^2(T)]{\DD{\ell-1}\utens{\tau}_T}.
  \end{aligned}
  \]
  Simplyfying by $\norm[L^2(T)]{\DD{\ell-1}\utens{\tau}_T}$ and multiplying by $h_T^2$ leads to
  \[
  h_T^2\norm[L^2(T)]{\DD{\ell-1}\utens{\tau}_T}\lesssim \norm[\btens{L}^2(T;\Real^2)]{\btens{\tau}_{\cvec{H},T}}+\sum_{V\in\VT} h_T|\btens{\tau}_V|+\sum_{E\in\ET}h_T^{\frac12}\norm[L^2(E)]{\tau_E}+\sum_{E\in\ET}h_T^{\frac32}\norm[L^2(E)]{D_{\btens{\tau},E}}.
  \]
  The proof of \eqref{eq:bound.DDT} is completed using the Cauchy--Schwarz inequality on the sums together with $\card(\VT)\lesssim 1$ and $\card(\ET)\lesssim 1$.
  \medskip

  \underline{2. \emph{Proof of \eqref{eq:bound.PSigmaE}.}}
  The mapping $q\in\Poly{\ell}([0,1])\to (q(0),q(1),\pi_{[0,1]}^{\ell-2}q)\in\Real\times\Real\times\Poly{\ell-2}([0,1])$ is an isomorphism and thus, for all $q\in\Poly{\ell}([0,1])$, we have
  \[
  \norm[L^2(0,1)]{q}^2\lesssim \norm[L^2(0,1)]{\pi_{[0,1]}^{\ell-2}q}^2+|q(0)|^2+|q(1)|^2.
  \]
  For any $E\in\ET$, using the isomorphism $[0,1]\ni s\mapsto \bvec{x}_{V_1}+sh_E(\bvec{x}-\bvec{x}_{V_1})\in E$, where $V_1,V_2$ are the vertices of $E$, we infer from the definition \eqref{eq:P.Sigma.E} of $\PSigmaE{\ell}\utens{\tau}_E$ and $h_T\lesssim h_E$ that
  \[
  h_T\norm[L^2(E)]{\PSigmaE{\ell}\utens{\tau}_E}^2\lesssim h_T\norm[L^2(E)]{\tau_E}^2+h_T^2|\btens{\tau}_{V_1}|^2
  +h_T^2|\btens{\tau}_{V_2}|^2.
  \]
  Summing over $E\in\ET$ concludes the proof of \eqref{eq:bound.PSigmaE}.
  \medskip

  \underline{3. \emph{Proof of \eqref{eq:bound.PSigmaT}.}}
  Reasoning via a scaling/transport argument as in \cite[Section 2.5]{Di-Pietro.Droniou:21*1} (see in particular Lemma 2 therein), we easily see that \eqref{eq:PolySym=Holy.oplus.cHoly} is a topological decomposition; with $m=\ell$, this gives $(q,\btens{\upsilon})\in\Poly{\ell+2}(T)\times\cHoly{\ell}(T)$ such that $\HESS q+\btens{\upsilon}=\PSigmaT{\ell}\utens{\tau}_T$ and $\norm[\btens{L}^2(T;\Real^{2\times 2})]{\HESS q}+\norm[\btens{L}^2(T;\Real^{2\times 2})]{\btens{\upsilon}}\lesssim \norm[\btens{L}^2(T;\Real^{2\times 2})]{\PSigmaT{\ell}\utens{\tau}_T}$. The space
  \[
  \mathfrak{C}^{k+1}(T)\coloneq\SPAN\left\{\left(\frac{\bvec{x}-\bvec{x}_T}{h_T}\right)^{\bvec{\alpha}}\,:\,\bvec{\alpha}=(\alpha_1,\alpha_2)\in \Natural^2\,,\;2\le\alpha_1+\alpha_2\le k+1
  \right\}
  \]
  (where $\bvec{y}^{\bvec{\alpha}}=y_1^{\alpha_1}y_2^{\alpha_2}$ if $\bvec{y}=(y_1,y_2)\in\Real^2$) is a complement of $\Poly{1}(T)=\Ker\HESS$ in $\Poly{k+1}(T)$. Selecting $q\in \mathfrak{C}^{k+1}(T)$, we can again use a transport and scaling argument as in \cite[Lemma 9]{Di-Pietro.Droniou:21*1} to see that $\norm[L^2(T)]{q}\lesssim h_T^2\norm[\btens{L}^2(T;\Real^{2\times 2})]{\HESS q}$. Hence, we have $(q,\btens{\upsilon})\in\Poly{\ell+2}(T)\times\cHoly{\ell}(T)$ such that 
  \begin{equation}\label{eq:Holy.cHoly.topological}
    \HESS q+\btens{\upsilon}
    =\PSigmaT{\ell}\utens{\tau}_T\quad\mbox{ and }\quad
    h_T^{-2}\norm[L^2(T)]{q}+\norm[\btens{L}^2(T;\Real^{2\times 2})]{\btens{\upsilon}}\lesssim \norm[\btens{L}^2(T;\Real^{2\times 2})]{\PSigmaT{\ell}\utens{\tau}_T}.
  \end{equation}
  Plugging these $(q,\btens{\upsilon})$ into the definition \eqref{eq:P.Sigma.T} of $\PSigmaT{\ell}\utens{\tau}_T$ and using discrete inverse and trace inequalities on the edges and the vertices, we obtain
  \[
  \begin{aligned}
    \norm[\btens{L}^2(T;\Real^{2\times 2})]{\PSigmaT{\ell}\utens{\tau}_T}^2\lesssim{}&
    \norm[L^2(T)]{\DD{\ell-1}\utens{\tau}_T}\norm[L^2(T)]{q}+\sum_{E\in\ET}\sum_{V\in\VE}|\btens{\tau}_V|h_T^{-1}\norm[L^2(T)]{q}\\
    &+\sum_{E\in\ET}\left(\norm[L^2(E)]{\PSigmaE{\ell}\utens{\tau}_E}h_T^{-\frac32}\norm[L^2(T)]{q}+
    \norm[L^2(E)]{D_{\btens{\tau},E}}h_T^{-\frac12}\norm[L^2(T)]{q}\right)\\
    &+\norm[\btens{L}^2(T;\Real^{2\times 2})]{\btens{\tau}_{\cvec{H},T}^\compl}\norm[\btens{L}^2(T;\Real^{2\times 2})]{\btens{\upsilon}}.
  \end{aligned}
  \]
  Applying \eqref{eq:Holy.cHoly.topological}, simplifying by $\norm[\btens{L}^2(T;\Real^{2\times 2})]{\PSigmaT{\ell}\utens{\tau}_T}$, and using Cauchy--Schwarz inequalities on the sums, we infer
  \[
  \begin{aligned}
    \norm[\btens{L}^2(T;\Real^{2\times 2})]{\PSigmaT{\ell}\utens{\tau}_T}\lesssim{}&
    h_T^2\norm[L^2(T)]{\DD{\ell-1}\utens{\tau}_T}+\left(\sum_{V\in\VT}h_T^2|\btens{\tau}_V|^2\right)^{\frac12}\\
    &+\left(\sum_{E\in\ET}h_T\norm[L^2(E)]{\PSigmaE{\ell}\utens{\tau}_E}^2\right)^{\frac12}+
    \left(\sum_{E\in\ET}h_T^3\norm[L^2(E)]{D_{\btens{\tau},E}}^2\right)^{\frac12}+\norm[\btens{L}^2(T;\Real^{2\times 2})]{\btens{\tau}_{\cvec{H},T}^\compl}.
  \end{aligned}
  \]
  The estimate \eqref{eq:bound.PSigmaT} then follows from \eqref{eq:bound.DDT} and \eqref{eq:bound.PSigmaE}.
\end{proof}

\begin{lemma}[Uniform equivalence between the component and operator norms]\label{lem:norm.equivalence}
  It holds, for $\bullet=h$ or $T\in\Th$,
  \begin{equation}\label{eq:norm.equivalence}
    \tnorm[\btens{\Sigma},\bullet]{\utens{\tau}_\bullet}\simeq \norm[\btens{\Sigma},\bullet]{\utens{\tau}_\bullet}\qquad\forall \utens{\tau}_\bullet\in\utens{\Sigma}_\bullet^\ell,
  \end{equation}
  where $a\simeq b$ means $a\lesssim b$ and $b\lesssim a$.
\end{lemma}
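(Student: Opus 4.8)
The plan is to reduce the claim to its element-local version and then prove a two-sided estimate, working component by component. Since $\tnorm[\btens{\Sigma},h]{\utens{\tau}_h}^2=\sum_{T\in\Th}\tnorm[\btens{\Sigma},T]{\utens{\tau}_T}^2$ and, by the very definition of the discrete $L^2$-product, $\norm[\btens{\Sigma},h]{\utens{\tau}_h}^2=\sum_{T\in\Th}\norm[\btens{\Sigma},T]{\utens{\tau}_T}^2$, it suffices to prove \eqref{eq:norm.equivalence} for $\bullet=T$; the global statement then follows with the same hidden constants. Throughout, I would use $h_T\lesssim h_E$ for $E\in\ET$ and $\card(\ET)+\card(\VT)\lesssim 1$ (mesh regularity), together with the discrete inverse and trace inequalities of \cite[Lemmas 1.28 and 1.32]{Di-Pietro.Droniou:20}, on $T$ as well as on its edges and vertices. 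The bound $\norm[\btens{\Sigma},T]{\utens{\tau}_T}\lesssim\tnorm[\btens{\Sigma},T]{\utens{\tau}_T}$ is the routine one: $\int_T|\PSigmaT{\ell}\utens{\tau}_T|^2$ is controlled by $\tnorm[\btens{\Sigma},T]{\utens{\tau}_T}^2$ via \eqref{eq:bound.PSigmaT}, and each term of $s_{\btens{\Sigma},T}(\utens{\tau}_T,\utens{\tau}_T)$ is bounded after splitting the integrand with the triangle inequality: the contribution of traces, tangential derivatives, or $\VDIV$ of $\PSigmaT{\ell}\utens{\tau}_T$ is absorbed through discrete trace/inverse inequalities and \eqref{eq:bound.PSigmaT}, that of $\PSigmaE{\ell}\utens{\tau}_E$ through \eqref{eq:bound.PSigmaE}, and that of $D_{\btens{\tau},E}$ and $\btens{\tau}_V$ appears directly (with the correct $h_T$-weights) in $\tnorm[\btens{\Sigma},T]{\utens{\tau}_T}$.

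The converse bound $\tnorm[\btens{\Sigma},T]{\utens{\tau}_T}\lesssim\norm[\btens{\Sigma},T]{\utens{\tau}_T}$ is the substantial part, as one must recover every component of $\utens{\tau}_T$ from $\PSigmaT{\ell}\utens{\tau}_T$ and the stabilisation. Two facts are immediate: $\norm[L^2(T)]{\PSigmaT{\ell}\utens{\tau}_T}\le\norm[\btens{\Sigma},T]{\utens{\tau}_T}$ and $s_{\btens{\Sigma},T}(\utens{\tau}_T,\utens{\tau}_T)^{1/2}\le\norm[\btens{\Sigma},T]{\utens{\tau}_T}$. Restricting \eqref{eq:P.Sigma.T} to $q=0$ shows $\btens{\tau}_{\cvec{H},T}^\compl=\btens{\pi}_{\cvec{H},T}^{\compl,\ell}(\PSigmaT{\ell}\utens{\tau}_T)$, whence $\norm[L^2(T)]{\btens{\tau}_{\cvec{H},T}^\compl}\le\norm[L^2(T)]{\PSigmaT{\ell}\utens{\tau}_T}$. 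Next, writing $\btens{\tau}_V$, $D_{\btens{\tau},E}$ and $\PSigmaE{\ell}\utens{\tau}_E$ respectively as the vertex value, the edge quantity $\partial_{\tangent_E}(\PSigmaT{\ell}\utens{\tau}_T\normal_E\cdot\tangent_E)+\VDIV\PSigmaT{\ell}\utens{\tau}_T\cdot\normal_E$, and the normal–normal trace of $\PSigmaT{\ell}\utens{\tau}_T$, minus the matching stabilisation defect, and using discrete trace and inverse inequalities applied to $\PSigmaT{\ell}\utens{\tau}_T$, I would obtain $h_T|\btens{\tau}_V|$, $h_T^{3/2}\norm[L^2(E)]{D_{\btens{\tau},E}}$ and $h_T^{1/2}\norm[L^2(E)]{\PSigmaE{\ell}\utens{\tau}_E}$, all $\lesssim\norm[\btens{\Sigma},T]{\utens{\tau}_T}$; since $\tau_E=\pi_{\mathcal{P},E}^{\ell-2}\PSigmaE{\ell}\utens{\tau}_E$ by \eqref{eq:P.Sigma.E}, the last of these also gives $h_T^{1/2}\norm[L^2(E)]{\tau_E}\lesssim\norm[\btens{\Sigma},T]{\utens{\tau}_T}$.

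The main obstacle is then the recovery of the two remaining components $\DD{\ell-1}\utens{\tau}_T$ and $\btens{\tau}_{\cvec{H},T}$, each of which appears in its defining relation (\eqref{eq:P.Sigma.T}, resp.\ \eqref{eq:DDT}) only coupled with the others, which forces a bootstrap. First, I would test \eqref{eq:P.Sigma.T} with $\btens{\upsilon}=\btens{0}$ and $q=\DD{\ell-1}\utens{\tau}_T\in\Poly{\ell-1}(T)\subset\Poly{\ell+2}(T)$, isolate $\norm[L^2(T)]{\DD{\ell-1}\utens{\tau}_T}^2$, and estimate the right-hand side by discrete inverse/trace inequalities together with the bounds already obtained, yielding $h_T^2\norm[L^2(T)]{\DD{\ell-1}\utens{\tau}_T}\lesssim\norm[\btens{\Sigma},T]{\utens{\tau}_T}$. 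Then, for $\ell\ge 3$ (when $\ell=2$, $\Holy{\ell-3}(T)=\HESS\Poly{\ell-1}(T)=\HESS\Poly{1}(T)=\{\btens{0}\}$ and there is nothing left to do), using $\Holy{\ell-3}(T)=\HESS\Poly{\ell-1}(T)$ I would write $\btens{\tau}_{\cvec{H},T}=\HESS r$ with $r$ chosen in the complement of $\Ker\HESS=\Poly{1}(T)$ in $\Poly{\ell-1}(T)$ for which the scaled estimate $\norm[L^2(T)]{r}\lesssim h_T^2\norm[L^2(T)]{\HESS r}$ holds (such a complement being exhibited in the proof of \eqref{eq:bound.PSigmaT}); testing \eqref{eq:DDT} with $q=r$ then gives $\norm[L^2(T)]{\btens{\tau}_{\cvec{H},T}}^2=\int_T\DD{\ell-1}\utens{\tau}_T\,r+(\text{vertex and edge terms in }\btens{\tau}_V,\tau_E,D_{\btens{\tau},E})$, and bounding the right-hand side by Cauchy--Schwarz, the scaling estimate on $r$, discrete trace inequalities and the previously established bounds leads to $\norm[L^2(T)]{\btens{\tau}_{\cvec{H},T}}^2\lesssim\norm[\btens{\Sigma},T]{\utens{\tau}_T}\norm[L^2(T)]{\btens{\tau}_{\cvec{H},T}}$, hence $\norm[L^2(T)]{\btens{\tau}_{\cvec{H},T}}\lesssim\norm[\btens{\Sigma},T]{\utens{\tau}_T}$. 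Collecting all the component bounds yields $\tnorm[\btens{\Sigma},T]{\utens{\tau}_T}\lesssim\norm[\btens{\Sigma},T]{\utens{\tau}_T}$ and completes the argument.
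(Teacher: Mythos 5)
Your proof is correct, but for the hard direction $\tnorm[\btens{\Sigma},T]{\utens{\tau}_T}\lesssim\norm[\btens{\Sigma},T]{\utens{\tau}_T}$ it takes a genuinely more laborious route than the paper for the recovery of $\btens{\tau}_{\cvec{H},T}$. You treat $\btens{\tau}_{\cvec{H},T}$ and $\DD{\ell-1}\utens{\tau}_T$ as jointly coupled and break the coupling by a two-stage bootstrap: first bound $h_T^2\norm[L^2(T)]{\DD{\ell-1}\utens{\tau}_T}$ by testing \eqref{eq:P.Sigma.T} with $q=\DD{\ell-1}\utens{\tau}_T$, then write $\btens{\tau}_{\cvec{H},T}=\HESS r$ for a scaled preimage $r$ and test \eqref{eq:DDT} with $q=r$. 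The paper instead short-circuits this by the observation that, for $\btens{\upsilon}=\btens{0}$ and any $q\in\Poly{\ell-1}(T)$, the edge and vertex terms appearing on the right of \eqref{eq:P.Sigma.T} cancel \emph{exactly} against those of \eqref{eq:DDT} (the edge terms thanks to $\partial_{\normal_E}q\in\Poly{\ell-2}(E)$ and $\pi_{\mathcal{P},E}^{\ell-2}\PSigmaE{\ell}\utens{\tau}_E=\tau_E$ from \eqref{eq:P.Sigma.E}), leaving the identity $\int_T\PSigmaT{\ell}\utens{\tau}_T:\HESS q=\int_T\btens{\tau}_{\cvec{H},T}:\HESS q$, i.e. $\btens{\pi}_{\cvec{H},T}^{\ell-3}(\PSigmaT{\ell}\utens{\tau}_T)=\btens{\tau}_{\cvec{H},T}$. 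Together with the analogous projection identity for $\btens{\tau}_{\cvec{H},T}^\compl$ obtained by setting $q=0$ (which you also use), $L^2$-boundedness of orthogonal projectors yields the bound on the cell components with no need to estimate $\DD{\ell-1}\utens{\tau}_T$ at all, no scaled preimage, and no bootstrap ordering. The remaining edge and vertex components are handled in the same way in both arguments, via triangle inequalities against the stabilisation defects and discrete trace/inverse inequalities applied to $\PSigmaT{\ell}\utens{\tau}_T$. In short: your proof is sound, but the cancellation observed in the paper buys a shorter, self-contained argument whose only ingredients are the two projection identities and the stabilisation defects.
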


\begin{proof}
  We obviously only have to prove the case $\bullet=T$ for a generic $T$, the case $\bullet=h$ resulting by squaring and summing over $T\in\Th$.
  We have
  \begin{equation}\label{eq:norm.tau.T}
    \begin{aligned}
      \norm[\btens{\Sigma},T]{\utens{\tau}_T}^2={}&\norm[\btens{L}^2(T;\Real^{2\times 2})]{\PSigmaT{\ell}\utens{\tau}_T}^2+
      \sum_{E\in\ET}h_T\norm[L^2(E)]{\PSigmaT{\ell}\utens{\tau}_T\normal_E\cdot\normal_E - \PSigmaE{\ell}\utens{\tau}_E}^2\\
      &+ \sum_{E\in\ET}h_T^3\norm[L^2(E)]{\partial_{\tangent_E}(\PSigmaT{\ell}\utens{\tau}_T\normal_E\cdot\tangent_E) + \VDIV\PSigmaT{\ell}\utens{\tau}_T\cdot\normal_E - D_{\btens{\tau},E}}^2\\
      &+ \sum_{V\in\VT} h_T^2|\PSigmaT{\ell}\utens{\tau}_T(\bvec{x}_V) - \btens{\tau}_V|^2.
    \end{aligned}
  \end{equation}
  Using triangle, discrete trace, and discrete inverse inequalities, we infer
  \[
    \norm[\btens{\Sigma},T]{\utens{\tau}_T}^2
    \lesssim
    \norm[\btens{L}^2(T;\Real^{2\times 2})]{\PSigmaT{\ell}\utens{\tau}_T}^2
    + \sum_{E\in\ET}h_T\norm[L^2(E)]{\PSigmaE{\ell}\utens{\tau}_E}^2
    + \sum_{E\in\ET}h_T^3\norm[L^2(E)]{D_{\btens{\tau},E}}^2
    + \sum_{V\in\VT} h_T^2|\btens{\tau}_V|^2.
  \]
  The estimates \eqref{eq:bound.PSigmaT} (for the first term in the right-hand side) and \eqref{eq:bound.PSigmaE} (for the second term) conclude the proof that $\norm[\btens{\Sigma},T]{\utens{\tau}_T}\lesssim\tnorm[\btens{\Sigma},T]{\utens{\tau}_T}$.
  \medskip

  Let us turn to the converse inequality. Making $\bvec{\upsilon}=\bvec{0}$ and taking a generic $q\in\Poly{\ell-1}(T)$ in the definition \eqref{eq:P.Sigma.T} of $\PSigmaT{\ell}\utens{\tau}_T$, the definitions \eqref{eq:P.Sigma.E} of $\PSigmaE{\ell}$ and \eqref{eq:DDT} of $\DD{\ell-1}$ show that
  \[
  \int_T\PSigmaT{\ell}\utens{\tau}_T:\HESS q = \int_T \btens{\tau}_{\cvec{H},T}:\HESS q,
  \]
  which yields $\btens{\pi}_{\cvec{H},T}^{\ell-3}(\PSigmaT{\ell}\utens{\tau}_T)= \btens{\tau}_{\cvec{H},T}$.
  Making $q=0$ in the definition \eqref{eq:P.Sigma.T} of $\PSigmaT{\ell}\utens{\tau}_T$, we also see that $\btens{\pi}_{\cvec{H},T}^{\compl,\ell}(\PSigmaT{\ell}\utens{\tau}_T) = \btens{\tau}_{\cvec{H},T}^\compl$.
  The $L^2$-boundedness of the orthogonal projectors thus yield
  \[
  \norm[\btens{L}^2(T;\Real^{2\times 2})]{\btens{\tau}_{\cvec{H},T}}^2+\norm[\btens{L}^2(T;\Real^{2\times 2})]{\btens{\tau}_{\cvec{H},T}^\compl}^2
  \le 2\norm[\btens{L}^2(T;\Real^{2\times 2})]{\PSigmaT{\ell}\utens{\tau}}^2.
  \]
  Using the definition \eqref{eq:P.Sigma.E} of $\PSigmaE{\ell}\utens{\tau}_E$ and the $L^2$-boundedness of $\pi^{\ell-2}_{\mathcal{P},E}$, we therefore infer
  \[
  \tnorm[\btens{\Sigma},T]{\utens{\tau}_T}^2\le
  2\norm[\btens{L}^2(T;\Real^{2\times 2})]{\PSigmaT{\ell}\utens{\tau}_T}^2+\sum_{E\in\ET}\left(h_T\norm[L^2(E)]{\PSigmaE{\ell}\utens{\tau}_E}^2+h_T^3\norm[L^2(E)]{D_{\btens{\tau},E}}^2\right)
  +\sum_{V\in\VT}h_T^2|\btens{\tau}_V|^2.
  \]
  Introducing $\PSigmaT{\ell}\utens{\tau}_T\normal_E\cdot\normal_E$, $\partial_{\tangent_E}(\PSigmaT{\ell}\utens{\tau}_T\normal_E\cdot\tangent_E) + \VDIV\PSigmaT{\ell}\utens{\tau}_T\cdot\normal_E$, and $\PSigmaT{\ell}\utens{\tau}_T(\bvec{x}_V)$ respectively in the edge and vertex terms, using triangle inequalities, and recalling \eqref{eq:norm.tau.T}, we infer
  \begin{multline*}
    \tnorm[\btens{\Sigma},T]{\utens{\tau}_T}^2\lesssim
    \norm[\btens{\Sigma},T]{\utens{\tau}_T}^2
    +\sum_{E\in\ET}h_T\norm[L^2(E)]{\PSigmaT{\ell}\utens{\tau}_T\normal_E\cdot\normal_E}^2\\
    +\sum_{E\in\ET}h_T^3\norm[L^2(E)]{\partial_{\tangent_E}(\PSigmaT{\ell}\utens{\tau}_T\normal_E\cdot\tangent_E) + \VDIV\PSigmaT{\ell}\utens{\tau}_T\cdot\normal_E}^2
    +\sum_{V\in\VT}h_T^2|\PSigmaT{\ell}\utens{\tau}_T(\bvec{x}_V)|^2.
  \end{multline*}
  The proof is completed applying discrete trace and inverse inequalities to bound the sums in the right-hand side, up to a multiplicative constant, by $\norm[\btens{L}^2(T;\Real^{2\times 2})]{\PSigmaT{\ell}\utens{\tau}_T}\le\norm[\btens{\Sigma},T]{\utens{\tau}_T}$.
\end{proof}

\subsubsection{Boundedness of the interpolator and inf-sup condition on $b_h$}

\begin{proposition}[Boundedness of {$\ISigma{\ell}$}]\label{eq:boundedness:ISigma}
  For all $T\in\Th$ and all $\btens{\tau}\in \btens{H}^2(T;\Symm)$, it holds
   \begin{equation}\label{eq:ISigmaT:boundedness}
      \norm[\btens{\Sigma},T]{\ISigma{\ell}\btens{\tau}}\lesssim 
      \norm[\btens{L}^2(T;\Real^{2\times 2})]{\btens{\tau}}+
      h_T\seminorm[\btens{H}^1(T;\Real^{2\times 2})]{\btens{\tau}}+
      h_T^2\seminorm[\btens{H}^2(T;\Real^{2\times 2})]{\btens{\tau}}.
   \end{equation}
  As a consequence, for all $\btens{\tau}\in \btens{H}^2(\Omega;\Symm)$,
  \begin{equation*}
    \norm[\btens{\Sigma},h]{\ISigma[h]{\ell}\btens{\tau}}
    \lesssim\norm[\btens{H}^2(\Omega;\Real^{2\times 2})]{\btens{\tau}}.
  \end{equation*}
\end{proposition}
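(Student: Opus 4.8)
The plan is to pass to the component norm and then bound the four groups of components of $\ISigma{\ell}\btens{\tau}$ separately, combining $L^2$-stability of the projectors with continuous trace and Sobolev inequalities on $T$. By Lemma~\ref{lem:norm.equivalence} it suffices to prove \eqref{eq:ISigmaT:boundedness} with $\norm[\btens{\Sigma},T]{{\cdot}}$ replaced by the component norm $\tnorm[\btens{\Sigma},T]{{\cdot}}$. Recalling the definition \eqref{eq:ISigmaT} of $\ISigma{\ell}$, the cell components $\btens{\pi}_{\cvec{H},T}^{\ell-3}\btens{\tau}$ and $\btens{\pi}_{\cvec{H},T}^{\compl,\ell}\btens{\tau}$ are immediately bounded by $\norm[\btens{L}^2(T;\Real^{2\times 2})]{\btens{\tau}}$ thanks to the $L^2$-stability of the orthogonal projectors $\btens{\pi}_{\cvec{H},T}^{\ell-3}$ and $\btens{\pi}_{\cvec{H},T}^{\compl,\ell}$.

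For the edge components I use the $L^2(E)$-stability of $\pi_{\cvec{P},E}^{\ell-2}$ and $\pi_{\cvec{P},E}^{\ell-1}$ together with a continuous trace inequality on $T$ (valid on shape-regular polygons, see \cite{Di-Pietro.Droniou:20}). This gives $h_T\norm[L^2(E)]{\tau_E}^2\le h_T\norm[L^2(E)]{\btens{\tau}_{|E}}^2\lesssim \norm[\btens{L}^2(T;\Real^{2\times 2})]{\btens{\tau}}^2+h_T^2\seminorm[\btens{H}^1(T;\Real^{2\times 2})]{\btens{\tau}}^2$; moreover, since $\partial_{\tangent_E}(\btens{\tau}_{|E}\normal_E\cdot\tangent_E)$ and $(\VDIV\btens{\tau})_{|E}\cdot\normal_E$ are controlled in $L^2(E)$ by the trace on $E$ of first-order derivatives of $\btens{\tau}$ (this is where the $\btens{H}^2$ regularity enters), applying the continuous trace inequality to these derivatives, which lie in $H^1(T)$, yields $h_T^3\norm[L^2(E)]{D_{\btens{\tau},E}}^2\lesssim h_T^2\seminorm[\btens{H}^1(T;\Real^{2\times 2})]{\btens{\tau}}^2+h_T^4\seminorm[\btens{H}^2(T;\Real^{2\times 2})]{\btens{\tau}}^2$.

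The vertex terms are the only slightly delicate point. Since $d=2$, $\btens{H}^2(T;\Symm)$ embeds continuously into $\bvec{C}^0(\overline{T};\Symm)$, and a transport to a reference configuration (again relying on the mesh regularity of \cite[Definition~1.9]{Di-Pietro.Droniou:20}) gives the scaled estimate $|\btens{\tau}(\bvec{x}_V)|^2\lesssim h_T^{-2}\norm[\btens{L}^2(T;\Real^{2\times 2})]{\btens{\tau}}^2+\seminorm[\btens{H}^1(T;\Real^{2\times 2})]{\btens{\tau}}^2+h_T^2\seminorm[\btens{H}^2(T;\Real^{2\times 2})]{\btens{\tau}}^2$, hence $h_T^2|\btens{\tau}_V|^2\lesssim \norm[\btens{L}^2(T;\Real^{2\times 2})]{\btens{\tau}}^2+h_T^2\seminorm[\btens{H}^1(T;\Real^{2\times 2})]{\btens{\tau}}^2+h_T^4\seminorm[\btens{H}^2(T;\Real^{2\times 2})]{\btens{\tau}}^2$. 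Summing the three groups of estimates over the uniformly bounded numbers of edges and vertices of $T$ and taking square roots gives \eqref{eq:ISigmaT:boundedness}. The global bound then follows by squaring \eqref{eq:ISigmaT:boundedness}, summing over $T\in\Th$, using $h_T\le h\lesssim 1$ to absorb the positive powers of $h_T$ in the seminorm contributions, and recognising $\sum_{T\in\Th}\big(\norm[\btens{L}^2(T;\Real^{2\times 2})]{\btens{\tau}}^2+\seminorm[\btens{H}^1(T;\Real^{2\times 2})]{\btens{\tau}}^2+\seminorm[\btens{H}^2(T;\Real^{2\times 2})]{\btens{\tau}}^2\big)=\norm[\btens{H}^2(\Omega;\Real^{2\times 2})]{\btens{\tau}}^2$.
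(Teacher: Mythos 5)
Your proposal is correct and follows the same route as the paper: pass to the component norm via the uniform norm equivalence, bound the cell components by $L^2$-stability of the projectors, bound the edge components by projector stability plus continuous trace inequalities applied to $\btens{\tau}$ and its first derivatives, and bound the vertex values by a scaled Sobolev-type pointwise estimate (the paper cites \cite[Eq.~(5.110)]{Di-Pietro.Droniou:20} for exactly the inequality you derive by transport and $H^2\hookrightarrow C^0$ in two dimensions). The global bound is obtained identically by squaring, summing over $T\in\Th$, and using $h_T\lesssim 1$.
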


\begin{proof}
By the norm equivalence of Lemma \ref{lem:norm.equivalence}, $\norm[\btens{\Sigma},T]{\ISigma{\ell}\btens{\tau}}\lesssim\tnorm[\btens{\Sigma},T]{\ISigma{\ell}\btens{\tau}}$.
Moreover, using the $L^2$-boundedness of orthogonal projectors and the definition \eqref{eq:ISigmaT} of $\ISigma{\ell}\btens{\tau}$, we have
 \[
 \begin{aligned}
 \tnorm[\btens{\Sigma},T]{\ISigma{\ell}\btens{\tau}}^2
    \lesssim{}&
    \norm[\btens{L}^2(T;\Real^{2\times 2})]{\btens{\tau}}^2+
    \sum_{E\in\ET}\left(h_T\norm[\btens{L}^2(E;\Real^{2\times 2})]{\btens{\tau}}^2
    + h_T^3\norm[\btens{L}^2(E;\Real^{2\times 2})]{\partial_{\tangent_E}\btens{\tau}_{|E}}^2
    + h_T^3\norm[\bvec{L}^2(E;\Real^2)]{\VDIV\btens{\tau}}^2\right)\\
    &+\sum_{V\in\VT}h_T^2|\btens{\tau}(\bvec{x}_V)|^2.
  \end{aligned}
 \]
 Continuous trace inequalities (see, e.g., \cite[Lemma 1.31]{Di-Pietro.Droniou:20}) applied to $\btens{\tau}$ and its derivatives lead to
 \begin{equation}\label{eq:bound.ISigma.1}
 \tnorm[\btens{\Sigma},T]{\ISigma{\ell}\btens{\tau}}^2
    \lesssim
    \norm[\btens{L}^2(T;\Real^{2\times 2})]{\btens{\tau}}^2+h_T^2\seminorm[\btens{H}^1(T;\Real^{2\times 2})]{\btens{\tau}}^2+h_T^4\seminorm[\btens{H}^2(T;\Real^{2\times 2})]{\btens{\tau}}^2
    +\sum_{V\in\VT}h_T^2|\btens{\tau}(\bvec{x}_V)|^2.
 \end{equation}
 By \cite[Eq.~(5.110)]{Di-Pietro.Droniou:20} and mesh regularity (which gives $|T|^{-\frac12}\lesssim h_T^{-1}$), it holds, for all $V\in\VT$,
  \begin{equation}\label{eq:bound.tau.xV}
    |\btens{\tau}(\bvec{x}_V)|\lesssim h_T^{-1} \norm[\btens{L}^2(T;\Real^{2\times 2})]{\btens{\tau}}
    +\seminorm[\btens{H}^1(T;\Real^{2\times 2})]{\btens{\tau}}+h_T \seminorm[\btens{H}^2(T;\Real^{2\times 2})]{\btens{\tau}}.
  \end{equation}
 Plugging \eqref{eq:bound.tau.xV} into \eqref{eq:bound.ISigma.1} completes the proof of \eqref{eq:ISigmaT:boundedness}.
\end{proof}

\begin{lemma}[Discrete inf-sup condition for $b_h$]
  It holds, for all $v_h\in \Poly{\ell-1}(\Th)$,
  \begin{equation}\label{eq:inf-sup}
    \norm[L^2(\Omega)]{v_h}
    \lesssim
    \sup_{\utens{\tau}_h\in\utens{\Sigma}_h^\ell\setminus\{\utens{0}\}} \frac{b_h(\utens{\tau}_h,v_h)}{\norm[\btens{\Sigma},h]{\utens{\tau}_h}}.    
  \end{equation}
\end{lemma}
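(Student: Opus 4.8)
\emph{Plan.} The idea is to test against the interpolate of an $\btens{H}^2$-stable right inverse of $\DIV\VDIV$ and to exploit the commutation property \eqref{eq:DDT.commutation}. First, I would establish a stable surjectivity statement for $\DIV\VDIV$: for every $v_h\in\Poly{\ell-1}(\Th)$ there is $\btens{\tau}\in\btens{H}^2(\Omega;\Symm)$ with $\DIV\VDIV\btens{\tau} = v_h$ and $\norm[\btens{H}^2(\Omega;\Real^{2\times 2})]{\btens{\tau}}\lesssim\norm[L^2(\Omega)]{v_h}$. Since $\Omega$ is only assumed contractible polygonal (hence possibly non-convex), I would obtain this by a superdomain device rather than by elliptic regularity on $\Omega$ itself: fix a ball $B\supset\overline\Omega$, extend $v_h$ by zero to $\tilde v\in L^2(B)$, solve the clamped biharmonic problem $\Delta^2 w = \tilde v$ in $B$ with $w\in H^4(B)\cap H^2_0(B)$ (elliptic regularity on the smooth convex domain $B$ gives $\norm[H^4(B)]{w}\lesssim\norm[L^2(B)]{\tilde v} = \norm[L^2(\Omega)]{v_h}$), and set $\btens{\tau}\coloneq(\HESS w)_{|\Omega}$; then $\btens{\tau}\in\btens{H}^2(\Omega;\Symm)$ and $\DIV\VDIV\btens{\tau} = (\Delta^2 w)_{|\Omega} = v_h$, with the desired bound. (The plain surjectivity of $\DIV\VDIV$, without the stability bound, is the one already invoked in the proof of Theorem \ref{thm:exactness}; see \cite{Chen.Huang:18}.)

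Next, I would set $\utens{\tau}_h\coloneq\ISigma[h]{\ell}\btens{\tau}$, which is well defined since $\btens{\tau}\in\btens{H}^2(\Omega;\Symm)$. Applying the commutation property \eqref{eq:DDT.commutation} on each $T\in\Th$ to $\btens{\tau}_{|T}$, and using $v_{|T}\in\Poly{\ell-1}(T)$, one gets $\DD{\ell-1}\utens{\tau}_T = \pi_{\mathcal{P},T}^{\ell-1}\big((\DIV\VDIV\btens{\tau})_{|T}\big) = \pi_{\mathcal{P},T}^{\ell-1}v_{|T} = v_{|T}$ for all $T\in\Th$, whence $b_h(\utens{\tau}_h,v_h) = \sum_{T\in\Th}\int_T v_{|T}^2 = \norm[L^2(\Omega)]{v_h}^2$. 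On the other hand, Proposition \ref{eq:boundedness:ISigma} together with the stability bound of the first step yield $\norm[\btens{\Sigma},h]{\utens{\tau}_h}\lesssim\norm[\btens{H}^2(\Omega;\Real^{2\times 2})]{\btens{\tau}}\lesssim\norm[L^2(\Omega)]{v_h}$.

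Finally, I would conclude by the usual supremum argument: for $v_h = 0$ the inequality \eqref{eq:inf-sup} is trivial, while for $v_h\neq 0$ one has $\utens{\tau}_h\neq\utens{0}$ (since $b_h(\utens{\tau}_h,v_h) = \norm[L^2(\Omega)]{v_h}^2\neq 0$) and
\[
\sup_{\utens{\mu}_h\in\utens{\Sigma}_h^\ell\setminus\{\utens{0}\}}\frac{b_h(\utens{\mu}_h,v_h)}{\norm[\btens{\Sigma},h]{\utens{\mu}_h}}\ge\frac{b_h(\utens{\tau}_h,v_h)}{\norm[\btens{\Sigma},h]{\utens{\tau}_h}} = \frac{\norm[L^2(\Omega)]{v_h}^2}{\norm[\btens{\Sigma},h]{\utens{\tau}_h}}\gtrsim\norm[L^2(\Omega)]{v_h}.
\]
The main (and essentially only non-routine) obstacle is the first step: producing a right inverse of $\DIV\VDIV$ that lands in $\btens{H}^2(\Omega;\Symm)$ with norm controlled by $\norm[L^2(\Omega)]{v_h}$ uniformly in $v_h$ — the point being that one cannot invoke $H^4$-regularity of the biharmonic problem directly on a possibly non-convex $\Omega$, which is why the superdomain construction above is used. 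Everything downstream is a bookkeeping combination of the commutation property \eqref{eq:DDT.commutation} and the interpolator bound of Proposition \ref{eq:boundedness:ISigma}.
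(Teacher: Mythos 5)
Your argument is correct and follows essentially the same route as the paper's: produce a stable right inverse of $\DIV\VDIV$ with values in $\btens{H}^2(\Omega;\Symm)$, apply the interpolator $\ISigma[h]{\ell}$, invoke the commutation property \eqref{eq:DDT.commutation} to see that the interpolate reproduces $v_h$ under $\DD{\ell-1}$, and use the boundedness of Proposition \ref{eq:boundedness:ISigma} in a standard Fortin argument. The one place where you go beyond the paper is the first step: the paper simply cites \cite{Chen.Huang:18} for the existence of $\btens{\tau}_v\in\btens{H}^2(\Omega;\Symm)$ with $\DIV\VDIV\btens{\tau}_v=v_h$ and $\norm[\btens{H}^2(\Omega;\Real^{2\times 2})]{\btens{\tau}_v}\lesssim\norm[L^2(\Omega)]{v_h}$, whereas you actually construct it: extend $v_h$ by zero to a ball $B\supset\overline\Omega$, solve the clamped biharmonic $\Delta^2 w = \tilde v$ on $B$ with $H^4$-regularity (using that $B$ is smooth), and set $\btens{\tau}=(\HESS w)_{|\Omega}$. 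Using $\DIV\VDIV\HESS w = \Delta^2 w$, this indeed gives the desired $\btens{\tau}\in\btens{H}^2(\Omega;\Symm)$ with the stability bound, independently of the convexity or regularity of $\Omega$ itself. This is a nice self-contained variant of the first step; the rest of your proof matches the paper's.
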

\begin{proof}
By \cite{Chen.Huang:18}, $\DIV\VDIV:\Hdivdiv{\Omega}{\Symm}\to L^2(\Omega)$ is surjective, i.e., there exists $\btens{\tau}_v\in \btens{H}^2(T;\Symm)$ such that
  $\DIV\VDIV\btens{\tau}_v = v_h$ and $\norm[\btens{H}^2(\Omega;\Real^{2\times 2})]{\btens{\tau}_v}\lesssim\norm[L^2(\Omega)]{v_h}$.
  Combining this fact with the commutation property \eqref{eq:DDT.commutation} of $\DD{\ell-1}$ and the boundedness of the interpolator proved in Proposition \ref{eq:boundedness:ISigma}, one can use the classical Fortin argument (see, e.g., \cite[Section 5.4.3]{Boffi.Brezzi.ea:13}) to infer \eqref{eq:inf-sup}.
\end{proof}

\subsubsection{Proof of Lemma \ref{lem:well-posedness:discrete}}

Owing to \eqref{eq:A.coer.bound}, the bilinear form $a_h$ defined by \eqref{eq:ah.bh} is clearly coercive and continuous with respect to the $\norm[\btens{\Sigma},h]{{\cdot}}$-norm, with coercivity constant $\frac{1}{D(1+\nu)}$ and continuity constant $\frac{2}{D(1-\nu)}$.
By \eqref{eq:inf-sup}, the bilinear form $b_h$ is inf-sup stable with a constant that does not depend on $D$, $\nu$ or the meshsize.
The conclusion follows applying \cite[Lemma A.11 and Proposition A.4]{Di-Pietro.Droniou:20}.


\subsection{Convergence analysis}\label{sec:application:convergence.analysis}

  This section contains technical results concerning the consistency of the potential reconstruction in $\utens{\Sigma}_h^\ell$ and of the stabilisation bilinear forms $\{s_{\btens{\Sigma},T}\}_{T\in\Th}$, from which a bound on the consistency error is inferred.
  Theorem \ref{thm:error.estimate} directly follows combining the inf-sup stability \eqref{eq:Ah.infsup} along with the latter bound (see \eqref{eq:consistency.bound} below) and applying the Third Strang Lemma of \cite{Di-Pietro.Droniou:18}.

\begin{proposition}[Consistency of the potential reconstruction and stabilization form]
  For all $T\in\Th$ and all $\btens{\tau}\in \btens{H}^{\ell+1}(T;\Symm)$, it holds
  \begin{align}\label{eq:consistency.PT}
    \norm[\btens{L}^2(T;\Real^{2\times 2})]{\PSigmaT{\ell}(\ISigma{\ell}\btens{\tau})-\btens{\tau}}\lesssim
         {}&h_T^{\ell+1}\seminorm[\btens{H}^{\ell+1}(T)]{\btens{\tau}},\\
         \label{eq:consistency.sT}
         s_{\btens{\Sigma},T}(\ISigma{\ell}\bvec{\tau},\utens{\upsilon}_T)\lesssim{}&
         h_T^{\ell+1}\seminorm[\btens{H}^{\ell+1}(T)]{\btens{\tau}}\norm[\btens{\Sigma},T]{\utens{\upsilon}_T}
         \qquad\forall \utens{\upsilon}_T\in\utens{\Sigma}_T^\ell.
  \end{align}
\end{proposition}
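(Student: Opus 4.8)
The plan is to prove the two estimates \eqref{eq:consistency.PT} and \eqref{eq:consistency.sT} by comparing $\PSigmaT{\ell}(\ISigma{\ell}\btens{\tau})$ against a local polynomial approximation of $\btens{\tau}$ of optimal order and exploiting the polynomial consistency \eqref{eq:PT.poly.consistency} together with the boundedness estimates \eqref{eq:bound.DDT}--\eqref{eq:bound.PSigmaT}. Concretely, I would introduce $\btens{\tau}_h \coloneq \btens{\pi}_{\mathcal{P},T}^{\ell}\btens{\tau}\in\tPoly{\ell}(T;\Symm)$ (the $L^2$-orthogonal projection onto degree-$\ell$ symmetric tensor polynomials, or equivalently an elementwise $L^2$- or $W^{2,2}$-projection, whichever is more convenient for the trace terms) and write
\[
  \PSigmaT{\ell}(\ISigma{\ell}\btens{\tau})-\btens{\tau}
  = \PSigmaT{\ell}\big(\ISigma{\ell}(\btens{\tau}-\btens{\tau}_h)\big)
  + \big(\PSigmaT{\ell}(\ISigma{\ell}\btens{\tau}_h)-\btens{\tau}_h\big)
  + (\btens{\tau}_h-\btens{\tau}).
\]
The middle term vanishes identically by \eqref{eq:PT.poly.consistency} since $\btens{\tau}_h$ is a degree-$\ell$ polynomial. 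The last term is controlled by $h_T^{\ell+1}\seminorm[\btens{H}^{\ell+1}(T)]{\btens{\tau}}$ by standard polynomial approximation estimates (e.g.\ \cite[Theorem 1.45]{Di-Pietro.Droniou:20}). For the first term, combine the boundedness \eqref{eq:bound.PSigmaT} of $\PSigmaT{\ell}$ with respect to $\tnorm[\btens{\Sigma},T]{{\cdot}}$ and the boundedness \eqref{eq:ISigmaT:boundedness} of the interpolator $\ISigma{\ell}$, applied to $\btens{\tau}-\btens{\tau}_h$; this bounds it by $\norm[\btens{L}^2(T)]{\btens{\tau}-\btens{\tau}_h} + h_T\seminorm[\btens{H}^1(T)]{\btens{\tau}-\btens{\tau}_h} + h_T^2\seminorm[\btens{H}^2(T)]{\btens{\tau}-\btens{\tau}_h}$, each term of which is again $\lesssim h_T^{\ell+1}\seminorm[\btens{H}^{\ell+1}(T)]{\btens{\tau}}$ by approximation. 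Summing yields \eqref{eq:consistency.PT}.

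For \eqref{eq:consistency.sT}, I would expand $s_{\btens{\Sigma},T}(\ISigma{\ell}\btens{\tau},\utens{\upsilon}_T)$ using a Cauchy--Schwarz inequality across its three groups of terms, factoring out $\norm[\btens{\Sigma},T]{\utens{\upsilon}_T}$ via the norm equivalence of Lemma \ref{lem:norm.equivalence} (note that $s_{\btens{\Sigma},T}(\utens{\upsilon}_T,\utens{\upsilon}_T)\le\norm[\btens{\Sigma},T]{\utens{\upsilon}_T}^2$). It then remains to bound $s_{\btens{\Sigma},T}(\ISigma{\ell}\btens{\tau},\ISigma{\ell}\btens{\tau})^{1/2}$. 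Here the key observation is that $s_{\btens{\Sigma},T}$ vanishes on interpolates of degree-$\ell$ polynomials: by the edge and vertex polynomial consistencies \eqref{eq:PE.poly.consistency}, \eqref{eq:PT.poly.consistency} and the commutation property \eqref{eq:DDT.commutation}, for $\btens{\tau}_h\in\tPoly{\ell}(T;\Symm)$ one has $\PSigmaT{\ell}(\ISigma{\ell}\btens{\tau}_h)\normal_E\cdot\normal_E = (\btens{\tau}_h)_{|E}\normal_E\cdot\normal_E = \PSigmaE{\ell}(\ISigma[E]{\ell}(\btens{\tau}_h)_{|E})$ on each edge, the $D$-component difference vanishes similarly (cf.\ the definition \eqref{eq:ISigmaT} and the integration-by-parts argument behind \eqref{eq:DDT.commutation}), and $\PSigmaT{\ell}(\ISigma{\ell}\btens{\tau}_h)(\bvec{x}_V) = \btens{\tau}_h(\bvec{x}_V) = (\btens{\tau}_h)_V$ at each vertex. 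Hence $s_{\btens{\Sigma},T}(\ISigma{\ell}\btens{\tau},\ISigma{\ell}\btens{\tau}) = s_{\btens{\Sigma},T}(\ISigma{\ell}(\btens{\tau}-\btens{\tau}_h),\ISigma{\ell}(\btens{\tau}-\btens{\tau}_h))\le\norm[\btens{\Sigma},T]{\ISigma{\ell}(\btens{\tau}-\btens{\tau}_h)}^2$, which is controlled by \eqref{eq:ISigmaT:boundedness} and polynomial approximation exactly as above, giving the factor $h_T^{\ell+1}\seminorm[\btens{H}^{\ell+1}(T)]{\btens{\tau}}$.

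**Main obstacle.** The delicate point is the vanishing of $s_{\btens{\Sigma},T}$ on $\ISigma{\ell}\btens{\tau}_h$ for $\btens{\tau}_h\in\tPoly{\ell}(T;\Symm)$ — in particular the middle (edge, $h_T^3$-weighted) group, which involves $\partial_{\tangent_E}(\PSigmaT{\ell}\utens{\tau}_T\normal_E\cdot\tangent_E)+\VDIV\PSigmaT{\ell}\utens{\tau}_T\cdot\normal_E - D_{\btens{\tau},E}$. One must check that for $\utens{\tau}_T = \ISigma{\ell}\btens{\tau}_h$ this equals $\partial_{\tangent_E}((\btens{\tau}_h)_{|E}\normal_E\cdot\tangent_E)+(\VDIV\btens{\tau}_h)_{|E}\cdot\normal_E - \pi_{\mathcal{P},E}^{\ell-1}[\partial_{\tangent_E}((\btens{\tau}_h)_{|E}\normal_E\cdot\tangent_E)+(\VDIV\btens{\tau}_h)_{|E}\cdot\normal_E]$, which vanishes provided this quantity lies in $\Poly{\ell-1}(E)$; this is where the consistency \eqref{eq:PT.poly.consistency} for $\PSigmaT{\ell}$ and a careful degree count (using $\ell-1\ge1$) are needed, together with the fact that $\VDIV$ and $\partial_{\tangent_E}$ lower polynomial degrees. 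Apart from this bookkeeping, the remaining steps are routine applications of the already-established boundedness estimates and standard approximation theory.
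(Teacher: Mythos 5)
Your proposal is correct and follows essentially the same route as the paper: introduce the $L^2$-orthogonal projection $\hat{\btens{\tau}}_T=\btens{\pi}^\ell_{\ctens{P},T}\btens{\tau}$, invoke the polynomial consistency \eqref{eq:PT.poly.consistency} (and, for the stabilization, \eqref{eq:PE.poly.consistency}) to reduce to the approximation error $\btens{\tau}-\hat{\btens{\tau}}_T$, and then combine the boundedness of $\PSigmaT{\ell}$, the interpolator bound \eqref{eq:ISigmaT:boundedness}, norm equivalence, and standard polynomial approximation. Your worry about the middle (edge, $h_T^3$-weighted) group of the stabilization is in fact resolved by exactly the argument you sketch — for $\btens{\tau}_h\in\tPoly{\ell}(T;\Symm)$ the quantity $\partial_{\tangent_E}(\btens{\tau}_h\normal_E\cdot\tangent_E)+\VDIV\btens{\tau}_h\cdot\normal_E$ lies in $\Poly{\ell-1}(E)$ so $\pi_{\mathcal{P},E}^{\ell-1}$ acts as the identity on it — although the commutation property \eqref{eq:DDT.commutation} and the condition $\ell-1\ge1$ that you mention play no real role in this particular degree count.
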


\begin{proof}
  \underline{1. \emph{Proof of \eqref{eq:consistency.PT}.}}
  Let $\hat{\btens{\tau}}_T\coloneq\btens{\pi}^\ell_{\ctens{P},T}\btens{\tau}$ be the $L^2$-orthogonal projection of $\btens{\tau}$ on $\tPoly{2}(T;\Symm)$ (which boils down to the component-wise $L^2$-projections). The polynomial consistency \eqref{eq:PT.poly.consistency} of $\PSigmaT{\ell}$ yields $\PSigmaT{\ell}(\ISigma{\ell}\hat{\btens{\tau}}_T) = \hat{\btens{\tau}}_T$, and so
  \[
  \PSigmaT{\ell}(\ISigma{\ell}\btens{\tau})-\btens{\tau}=
  \PSigmaT{\ell}(\ISigma{\ell}(\btens{\tau}-\hat{\btens{\tau}}_T))-(\btens{\tau}-\hat{\btens{\tau}}_T).
  \]
  Applying a triangle inequality and the estimate \eqref{eq:bound.PSigmaT} on $\PSigmaT{\ell}$, the norm equivalence \eqref{eq:norm.equivalence}, and the boundedness \eqref{eq:ISigmaT:boundedness} of $\ISigma{\ell}$, we infer
  \begin{align*}
    \norm[\btens{L}^2(T;\Real^{2\times 2})]{\PSigmaT{\ell}(\ISigma{\ell}\btens{\tau})-\btens{\tau}}
    \lesssim{}&
    \norm[\btens{\Sigma},T]{\ISigma{\ell}(\btens{\tau}-\hat{\btens{\tau}}_T)}+
    \norm[\btens{L}^2(T;\Real^{2\times 2})]{\btens{\tau}-\hat{\btens{\tau}}_T}\\
    \lesssim{}&
    \norm[\btens{L}^2(T;\Real^{2\times 2})]{\btens{\tau}-\hat{\btens{\tau}}_T}+
    h_T\seminorm[\btens{H}^1(T;\Real^{2\times 2})]{\btens{\tau}-\hat{\btens{\tau}}_T}+
    h_T^2\seminorm[\btens{H}^2(T;\Real^{2\times 2})]{\btens{\tau}-\hat{\btens{\tau}}_T}.
  \end{align*}
  The estimate \eqref{eq:consistency.PT} follows from the approximation properties of $\btens{\pi}^\ell_{\mathcal{P},T}$, see \cite[Theorem 1.45]{Di-Pietro.Droniou:20} and also \cite[Lemmas 3.4 and 3.6]{Di-Pietro.Droniou:17}.
  \medskip\\
  \underline{2. \emph{Proof of \eqref{eq:consistency.sT}.}}
  To prove \eqref{eq:consistency.sT}, we first notice that the polynomial consistencies \eqref{eq:PE.poly.consistency} and \eqref{eq:PT.poly.consistency} yield
  \[
  s_{\btens{\Sigma},T}(\ISigma{\ell}\hat{\btens{\tau}}_T,\utens{\upsilon}_T)=0\qquad
  \forall(\hat{\btens{\tau}}_T,\utens{\upsilon}_T)\in\tPoly{\ell}(T;\Symm)\times\utens{\Sigma}_T^\ell.
  \]
  Letting, as above, $\hat{\btens{\tau}}_T = \btens{\pi}^\ell_{\ctens{P},T}\btens{\tau}$, we infer that
  \begin{align*}
    s_{\btens{\Sigma},T}(\ISigma{\ell}\btens{\tau},\ISigma{\ell}\btens{\tau})^{\frac12}
    = s_{\btens{\Sigma},T}(\ISigma{\ell}(\btens{\tau}-\hat{\btens{\tau}}_T),\ISigma{\ell}(\btens{\tau}-\hat{\btens{\tau}}_T))^{\frac12}
    \le\norm[\btens{\Sigma},T]{\ISigma{\ell}(\btens{\tau}-\hat{\btens{\tau}}_T)},
  \end{align*}
  where conclusion follows from the definition of $\norm[\btens{\Sigma},T]{{\cdot}}$.
  The approximation properties of $\btens{\pi}^\ell_{\ctens{P},T}$ then yield, as in Point 1., $s_{\btens{\Sigma},T}(\ISigma{\ell}\btens{\tau},\ISigma{\ell}\btens{\tau})^{\frac12}\lesssim h_T^{\ell+1}\seminorm[\btens{H}^{\ell+1}(T;\Real^{2\times 2})]{\btens{\tau}}$, and \eqref{eq:consistency.sT} follows using a Cauchy--Schwarz inequality on the positive semi-definite bilinear form $s_{\btens{\Sigma},T}$.
\end{proof}

\begin{lemma}[Consistency error bound]\label{lem:consistency.bound}
  Assume that $\btens{\sigma}\in \btens{H}^2(\Omega;\Symm)\cap\btens{H}^{\ell+1}(\Th;\Symm)$ and $u\in C^1(\overline{\Omega})\cap H^{\ell+3}(\Th)$ solve \eqref{eq:strong.problem}.
  Let the consistency error $\mathcal E_h((\btens{\sigma},u);\cdot):\utens{\Sigma}_h^\ell\times \Poly{\ell-1}(\Th)\to\Real$ be such that, for all $(\utens{\tau}_h,v_h)\in \utens{\Sigma}_h^\ell\times \Poly{\ell-1}(\Th)$,
  \begin{align*}
    \mathcal E_h((\btens{\sigma},u);(\utens{\tau}_h,v_h))\coloneq{}&\int_\Omega f v_h-a_h(\ISigma[h]{\ell}\btens{\sigma},\utens{\tau}_h)-b_h(\utens{\tau}_h,\pi^{\ell-1}_{\mathcal{P},h}u) + b_h(\ISigma[h]{\ell}\btens{\sigma},v_h).
  \end{align*}
  Then, it holds
  \begin{equation}\label{eq:consistency.bound}
    |\mathcal E_h((\btens{\sigma},u);(\utens{\tau}_h,v_h))|\lesssim h^{\ell+1}\norm[\btens{\Sigma}\times L,h]{(\utens{\tau}_h,v_h)}\left(
    \tfrac{1}{D(1-\nu)}\seminorm[\btens{H}^{\ell+1}(\Th)]{\btens{\sigma}}
    + \seminorm[H^{\ell+3}(\Th)]{u}
    \right).
  \end{equation}
\end{lemma}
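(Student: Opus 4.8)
The plan is to rewrite the consistency error by exploiting the equations \eqref{eq:strong.problem} satisfied by the exact solution and the commutation/polynomial-consistency properties of the discrete operators, thereby expressing $\mathcal E_h$ as a sum of local terms each of which is a difference between an exact integral and its discrete counterpart. First I would use the balance equation $-\DIV\VDIV\btens{\sigma}=f$ together with the commutation property \eqref{eq:DDT.commutation} of $\DD{\ell-1}$, namely $\DD{\ell-1}(\ISigma{\ell}\btens{\sigma})=\pi^{\ell-1}_{\mathcal P,T}(\DIV\VDIV\btens{\sigma})$, to recognise that the terms $\int_\Omega fv_h$ and $b_h(\ISigma[h]{\ell}\btens{\sigma},v_h)$ cancel exactly (here the fact that $v_h\in\Poly{\ell-1}(\Th)$ lets the projector be absorbed). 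This reduces the consistency error to
\[
\mathcal E_h((\btens{\sigma},u);(\utens{\tau}_h,v_h))
= -a_h(\ISigma[h]{\ell}\btens{\sigma},\utens{\tau}_h) - b_h(\utens{\tau}_h,\pi^{\ell-1}_{\mathcal{P},h}u).
\]
Next, recalling the constitutive law $\btens{\sigma}=-\mathbb A\HESS u$, i.e.\ $\mathbb A^{-1}\btens{\sigma}=-\HESS u$, I would split $a_h$ into its $L^2$-part and its stabilisation part: the stabilisation contribution is estimated directly by \eqref{eq:consistency.sT} and a Cauchy--Schwarz inequality, giving a term $\lesssim \tfrac{1}{D(1-\nu)}h_T^{\ell+1}\seminorm[\btens H^{\ell+1}(T)]{\btens{\sigma}}\norm[\btens{\Sigma},T]{\utens{\tau}_T}$ (the factor $\tfrac{1}{D(1-\nu)}$ coming from the $\tfrac{1}{D(1+\nu)}$ weight in $a_h$ and \eqref{eq:A.coer.bound}).

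The heart of the argument is the $L^2$-part $\sum_{T}\int_T\mathbb A^{-1}\PSigmaT{\ell}(\ISigma{\ell}\btens{\sigma}):\PSigmaT{\ell}\utens{\tau}_T + b_h(\utens{\tau}_h,\pi^{\ell-1}_{\mathcal{P},h}u)$. Using $\mathbb A^{-1}\btens{\sigma}=-\HESS u$ and inserting $\btens{\sigma}$ for $\PSigmaT{\ell}(\ISigma{\ell}\btens{\sigma})$ up to the consistency error \eqref{eq:consistency.PT}, the leading term becomes $-\sum_T\int_T\HESS u:\PSigmaT{\ell}\utens{\tau}_T$, which I would then match against $b_h(\utens{\tau}_h,\pi^{\ell-1}_{\mathcal{P},h}u)=\sum_T\int_T\DD{\ell-1}\utens{\tau}_T\,\pi^{\ell-1}_{\mathcal P,T}u=\sum_T\int_T\DD{\ell-1}\utens{\tau}_T\,u$ (the projector is absorbed since $\DD{\ell-1}\utens{\tau}_T\in\Poly{\ell-1}(T)$) by invoking the \emph{defining relation} \eqref{eq:P.Sigma.T} of the tensor potential $\PSigmaT{\ell}$ with $q$ chosen (on each $T$) as a suitable high-degree polynomial approximation of $u$ and $\btens{\upsilon}=\btens0$. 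Concretely, taking $q=\pi^{\ell+2}_{\mathcal P,T}u\in\Poly{\ell+2}(T)$ in \eqref{eq:P.Sigma.T} yields an identity of the form
\[
\int_T\PSigmaT{\ell}\utens{\tau}_T:\HESS(\pi^{\ell+2}_{\mathcal P,T}u)
= \int_T\DD{\ell-1}\utens{\tau}_T\,\pi^{\ell+2}_{\mathcal P,T}u
+ (\text{edge and vertex terms involving }\tau_E,D_{\btens{\tau},E},\btens{\tau}_V,\PSigmaE{\ell}\utens{\tau}_E),
\]
and the integration-by-parts formula \eqref{eq:ibp.SigmaT} applied to the exact pair $(\ISigma{\ell}\btens{\sigma}\text{'s underlying }\btens{\sigma}$-traces$,\,u)$ — more precisely using that the interpolator \eqref{eq:ISigmaT} picks exactly the traces $\btens{\tau}_V=\btens{\sigma}(\bvec x_V)$, $\tau_E=\pi^{\ell-2}_{\mathcal P,E}(\btens{\sigma}\normal_E\cdot\normal_E)$, $D_{\btens{\sigma},E}=\pi^{\ell-1}_{\mathcal P,E}[\partial_{\tangent_E}(\btens{\sigma}\normal_E\cdot\tangent_E)+\VDIV\btens{\sigma}\cdot\normal_E]$ and that $\PSigmaE{\ell}(\ISigma[E]{\ell}\btens{\sigma})$ approximates $\btens{\sigma}\normal_E\cdot\normal_E$ via \eqref{eq:PE.poly.consistency} — lets one collapse the boundary contributions back into $\int_\Omega\btens{\sigma}:\HESS u$ up to boundary-approximation remainders. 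Matching the two identities, the bulk terms cancel (using again $\btens{\sigma}=-\mathbb A\HESS u$ and that normal/tangential continuity of the traces across interfaces makes the inter-element edge contributions telescope to the global-boundary ones, which vanish owing to $u=\partial_{\normal}u=0$ on $\partial\Omega$), leaving only:
(i) the potential-consistency remainder bounded by $h_T^{\ell+1}\seminorm[\btens H^{\ell+1}(T)]{\btens{\sigma}}\norm[\btens{\Sigma},T]{\utens{\tau}_T}$ via \eqref{eq:consistency.PT}, the $L^2$-boundedness \eqref{eq:bound.PSigmaT} (through the norm equivalence Lemma \ref{lem:norm.equivalence}), and the bound \eqref{eq:A.coer.bound} on $\mathbb A^{-1}$ yielding the $\tfrac{1}{D(1-\nu)}$ factor; (ii) the approximation remainders $u-\pi^{\ell+2}_{\mathcal P,T}u$ and the edge-trace remainders $\btens{\sigma}\normal_E\cdot\normal_E-\PSigmaE{\ell}(\ISigma[E]{\ell}\btens{\sigma})$, each estimated by standard polynomial-approximation bounds (\cite[Theorem 1.45]{Di-Pietro.Droniou:20}, continuous trace inequalities) together with the boundedness estimates \eqref{eq:bound.DDT}, \eqref{eq:bound.PSigmaE} of the discrete operators, producing $h_T^{\ell+1}\seminorm[H^{\ell+3}(T)]{u}\norm[\btens{\Sigma},T]{\utens{\tau}_T}$ (the exponent $\ell+3$ on $u$ is forced by needing to control, e.g., $\HESS(u-\pi^{\ell+2}_{\mathcal P,T}u)$ in $H^1$ or on edges). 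Finally I would sum over $T\in\Th$, apply discrete Cauchy--Schwarz, bound $\big(\sum_T\norm[\btens{\Sigma},T]{\utens{\tau}_T}^2\big)^{1/2}=\norm[\btens{\Sigma},h]{\utens{\tau}_h}\le\norm[\btens{\Sigma}\times L,h]{(\utens{\tau}_h,v_h)}$, and collect the two families of remainders into \eqref{eq:consistency.bound}.

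The main obstacle I anticipate is the careful bookkeeping in the boundary terms when matching \eqref{eq:P.Sigma.T} against the integration-by-parts formula \eqref{eq:ibp.SigmaT}: one must verify that, after choosing $q=\pi^{\ell+2}_{\mathcal P,T}u$, the vertex terms $(\btens{\tau}_V\normal_E\cdot\tangent_E)q(\bvec x_V)$, the edge terms $\int_E\PSigmaE{\ell}\utens{\tau}_E\,\partial_{\normal_E}q$ and $\int_E D_{\btens{\tau},E}\,q$ produced by the potential's definition combine — across the subtriangulation-free, purely polygonal boundary of each $T$ and then across interfaces between elements — precisely into the consistent approximation of the exact boundary terms $\sum_E\omega_{TE}[\,\cdots]$ appearing in \eqref{eq:ibp.SigmaT} applied with the genuine $\btens{\sigma}$; the single-valuedness of the interface components $\tau_E, D_{\btens{\tau},E}, \btens{\tau}_V$ in $\utens{\Sigma}_h^\ell$ is what makes the inter-element cancellations work, and the homogeneous boundary conditions on $u$ kill what remains on $\partial\Omega$. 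Keeping the scaling in $h_T$ correct through each discrete trace/inverse inequality (so that the weights $h_T$, $h_T^3$, $h_T^2$ in $\tnorm[\btens{\Sigma},T]{{\cdot}}$ are exactly absorbed) is the place where an error is most likely to creep in, but it is entirely routine once the algebraic matching is set up.
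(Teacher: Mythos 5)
Your proposal is correct and follows essentially the same route as the paper's proof: cancel $\int_\Omega fv_h$ against $b_h(\ISigma[h]{\ell}\btens{\sigma},v_h)$ via the commutation property \eqref{eq:DDT.commutation} and the balance equation, split off and bound the stabilisation contribution by \eqref{eq:consistency.sT}, use the defining relation \eqref{eq:P.Sigma.T} with $q=\pi^{\ell+2}_{\mathcal P,T}u$ to reconcile $\sum_T\int_T\HESS u:\PSigmaT{\ell}\utens{\tau}_T$ with $\sum_T\int_T\DD{\ell-1}\utens{\tau}_T\,u$, and discharge the resulting edge and vertex remainders by exploiting single-valuedness of the interface components of $\utens{\tau}_h$ together with $u=\partial_\normal u=0$ on $\partial\Omega$. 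The only cosmetic difference is that the paper does not re-invoke \eqref{eq:ibp.SigmaT} on the exact pair; it simply replaces $\hat u_T\coloneq\pi^{\ell+2}_{\mathcal P,T}u$ by $\hat u_T-u$ in those boundary terms, the $u$-part summing to zero for exactly the reasons you cite, and then estimates the remainders by the approximation properties of $\pi^{\ell+2}_{\mathcal P,T}$ and the boundedness estimates \eqref{eq:bound.DDT}, \eqref{eq:bound.PSigmaE}, \eqref{eq:bound.PSigmaT}.
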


\begin{proof}
  By definition of $b_h$ and the commutation property \eqref{eq:DDT.commutation},
  \[
  b_h(\ISigma[h]{\ell}\btens{\sigma},v_h)=
  \sum_{T\in\Th}\int_T \DD{\ell-1}(\ISigma{\ell}\btens{\sigma})\, v_T
  =\sum_{T\in\Th}\int_T \cancel{\pi^{\ell-1}_{\mathcal{P},T}}(\DIV\VDIV\btens{\sigma})\, v_T
  =\int_\Omega\DIV\VDIV\btens{\sigma}\,v_h,
  \]
  where the cancellation of the projector is justified since $v_T\in\Poly{\ell-1}(T)$.
  Using $-\DIV\VDIV\btens{\sigma}=f$, we see that the first and last terms in the definition of the consistency error cancel out, and thus that
  \begin{align*}
    \mathcal E_h((\btens{\sigma},u);(\utens{\tau}_h,v_h))={}&
    - \sum_{T\in\Th}\left(
    \int_T\mathbb{A}^{-1}\PSigmaT{\ell}(\ISigma{\ell}\btens{\sigma}):\PSigmaT{\ell}\utens{\tau}_T
    + \frac{1}{D(1+\nu)} s_{\btens{\Sigma},T}(\ISigma{\ell}\btens{\sigma},\utens{\tau}_T)
    \right)\\
  &-\sum_{T\in\Th}\int_T \DD{\ell-1}\utens{\tau}_T\,\pi^{\ell-1}_{\mathcal{P},T}u.
  \end{align*}
  We then add and subtract $\mathbb{A}^{-1}\btens{\sigma}=-\HESS u$ to $\PSigmaT{\ell}(\ISigma{\ell}\btens{\sigma})$ and get
  \begin{align}
      \mathcal E_h((\btens{\sigma},u);(\utens{\tau}_h,v_h))={}&
      \sum_{T\in\Th}\left(
      \int_T\mathbb{A}^{-1}(\btens{\sigma}-\PSigmaT{\ell}(\ISigma{\ell}\btens{\sigma})):\PSigmaT{\ell}\utens{\tau}_T
      + \frac{1}{D(1+\nu)} s_{\btens{\Sigma},T}(\ISigma{\ell}\btens{\sigma},\utens{\tau}_T)
      \right)
      \nonumber\\
      &+\left(
      \sum_{T\in\Th}\int_T \HESS u:\PSigmaT{\ell}\utens{\tau}_T-\sum_{T\in\Th}\int_T \DD{\ell-1}\utens{\tau}_T\,\cancel{\pi^{\ell-1}_{\mathcal{P},T}}u\right)
      \eqcolon\term_1+\term_2,
  \label{eq:decompose.calE}
  \end{align}
  the cancellation of the projector being justified by $\DD{\ell-1}\utens{\tau}_T\in\Poly{\ell-1}(T)$.
  The consistency properties \eqref{eq:consistency.PT} and \eqref{eq:consistency.sT} of the local potential reconstruction and stabilization forms together with Cauchy--Schwarz inequalities and \eqref{eq:A.coer.bound} yield
  \begin{equation}\label{eq:term12}
    |\term_1|\lesssim \tfrac{1}{D(1-\nu)} h^{\ell+1}\seminorm[\btens{H}^{\ell+1}(\Th)]{\btens{\sigma}}\norm[\btens{\Sigma},h]{\utens{\tau}_h}.
  \end{equation}
  To estimate $\term_2$, we set $\hat{u}_T=\pi^{\ell+2}_{\mathcal{P},T}u$ and notice that the definition \eqref{eq:P.Sigma.T} of $\PSigmaT{\ell}$ yields
  \[
  \begin{aligned}
    0={}&-\int_T\PSigmaT{\ell}\utens{\tau}_T:\HESS \hat{u}_T
    +\int_T\DD{\ell-1}\utens{\tau}_T\,\hat{u}_T
    + \sum_{E\in\ET}\omega_{TE}\sum_{V\in\VE}\omega_{EV}(\btens{\tau}_V\normal_E\cdot\tangent_E)\,\hat{u}_T(\bvec{x}_V)
    \\
    &
    + \sum_{E\in\ET}\omega_{TE}\left(
    \int_E \PSigmaE{\ell}\utens{\tau}_E\,\partial_{\normal_E} \hat{u}_T
    - \int_E D_{\btens{\tau},E}\,\hat{u}_T
    \right).
  \end{aligned}
  \]
  Adding this quantity to $\term_2$ leads to
  \begin{equation}\label{eq:term2.first}
    \begin{aligned}
      \term_2={}&\left[\sum_{T\in\Th}\int_T \HESS (u-\hat{u}_T):\PSigmaT{\ell}\utens{\tau}_T-\sum_{T\in\Th}\int_T \DD{\ell-1}\utens{\tau}_T\,(u-\hat{u}_T)\right]\\
           {}&+ \sum_{T\in\Th}\sum_{E\in\ET}\omega_{TE}\sum_{V\in\VE}\omega_{EV}(\btens{\tau}_V\normal_E\cdot\tangent_E)\,(\hat{u}_T(\bvec{x}_V)-u(\bvec{x}_V))\\
           & + \sum_{T\in\Th}\sum_{E\in\ET}\omega_{TE}\left(
           \int_E \PSigmaE{\ell}\utens{\tau}_E\,\partial_{\normal_E} (\hat{u}_T-u)
           - \int_E D_{\btens{\tau},E}\,(\hat{u}_T-u)
           \right)\\
           \eqcolon{}&\term_{21}+\term_{22}+\term_{23},
    \end{aligned}
  \end{equation}
  where the introduction of $u(\bvec{x}_V)$ in $\term_{22}$ is justified by the fact that it vanishes for boundary vertices and that, if $T_1,T_2$ are the two elements on each side of an internal edge $E$,
  \[
  \sum_{T\in\Th}\sum_{E\in\ET}\omega_{TE}\underbrace{\sum_{V\in\VE}\omega_{EV}(\btens{\tau}_V\normal_E\cdot\tangent_E)\,u(\bvec{x}_V)}_{=:a_{EV}}
  =\sum_{E\in\Eh}\underbrace{(\omega_{T_1E}+\omega_{T_2E})}_{=0}a_{EV}=0,
  \]
  while the introduction of $u$ in $\term_{23}$ follows from the single-valuedness of this function and its derivatives on the edges along with $u = \partial_{\normal} u = 0$ on $\partial\Omega$, and a similar argument.
  The approximation properties of $\pi^{\ell+2}_{\mathcal{P},T}$ and the bound \eqref{eq:bound.DDT} on $\DD{\ell-1}$ give
  \begin{equation}\label{eq:est.term21}
    \begin{aligned}
      |\term_{21}|\lesssim{}& \sum_{T\in\Th}\left(h_T^{\ell+1}\seminorm[H^{\ell+3}(T)]{u}\norm[\btens{L}^2(T;\Real^{2\times 2})]{\PSigmaT{\ell}\utens{\tau}_T}+h_T^{\ell+3}\seminorm[H^{\ell+3}(T)]{u}\norm[L^2(T)]{\DD{\ell-1}\utens{\tau}_T}\right)\\
      \lesssim{}& h^{\ell+1}\seminorm[H^{\ell+3}(\Th)]{u}\norm[\btens{\Sigma},h]{\utens{\tau}_h}.
    \end{aligned}
  \end{equation}
  To estimate $\term_{22}$, we use \eqref{eq:bound.tau.xV} and the approximation properties of $\pi^{\ell+2}_{\mathcal{P},T}$:
  \begin{align}
    |\term_{22}|\lesssim\sum_{T\in\Th}\sum_{V\in\VT}|\btens{\tau}_V|h_T^{\ell+2}\seminorm[H^{\ell+3}(T)]{u}
    \le{}& \left(\sum_{T\in\Th}\sum_{V\in\VT}h_T^2|\btens{\tau}_V|^2\right)^{\frac12}
      \left(\sum_{T\in\Th}h_T^{2(\ell+1)}\seminorm[H^{\ell+3}(T)]{u}^2\right)^{\frac12}\nonumber\\
      \lesssim{}&\norm[\btens{\Sigma},h]{\utens{\tau}_h}h^{\ell+1}\seminorm[H^{\ell+3}(\Th)]{u},
      \label{eq:est.term22}
  \end{align}
  where we have used a Cauchy--Schwarz inequality and the norm equivalence \eqref{eq:norm.equivalence} to conclude.

  The estimate of $\term_{23}$ is obtained using Cauchy--Schwarz inequalities, the boudedness \eqref{eq:bound.PSigmaE} of the edge potential reconstructions, the norm equivalence \eqref{eq:norm.equivalence}, and the trace approximation properties of $\pi^{\ell+2}_{\mathcal{P},T}$ to get
  \begin{equation}\label{eq:est.term23}
    |\term_{23}|\lesssim h^{\ell+1}\seminorm[H^{\ell+3}(\Th)]{u}\norm[\btens{\Sigma},h]{\utens{\tau}_h}.
  \end{equation}
  Plugging \eqref{eq:est.term21}--\eqref{eq:est.term23} into \eqref{eq:term2.first}, we obtain $|\term_2|\lesssim h^{\ell+1}\seminorm[H^{\ell+3}(\Th)]{u}\norm[\btens{\Sigma},h]{\utens{\tau}_h}$ which, used in \eqref{eq:decompose.calE} together with \eqref{eq:term12}, concludes the proof of \eqref{eq:consistency.bound}.
\end{proof}


\section*{Acknowledgements}

The authors acknowledge the support of \emph{Agence Nationale de la Recherche} through the grant ANR-20-MRS2-0004 ``NEMESIS''.
Daniele Di Pietro also acknowledges the support of \emph{I-Site MUSE} through the grant ANR-16-IDEX-0006 ``RHAMNUS''.

\printbibliography

\end{document}